\DeclareMathOperator{\cHom}{\mathscr{H}\text{\kern -3pt {\calligra\large om}}\,}
\def\eZ{{Z}}
\def\la{\langle}
\def\ra{\rangle}
\def\aff{{\rm{aff}}}
\def\Waff{W_{\rm\aff}}
\def\Saff{S_{\rm\aff}}
\def\R{\mathbf{R}}
\def\a{{ \mathbf a}}
\def\b{{ \mathbf b}}
\def\q{{ \mathbf q}}
\theoremstyle{plain}
\def\k{{{k}}}
\def\tr{{\rm{tr}}}
\def\Z{\mathbb{Z}}
\def\cX{{\check X}}
\def\cPhi{{\check \Phi}}
\def\rk{\rm{rk}}
\def\WFf{{W^0_F}}
\def\WF{{W_F}}
\def\WC{{W_C}}
\def\A{\mathbf A}
\def\Bi{\mathbf {B.i}}
\def\Bii{\mathbf {B.ii}}
\newcommand{\cV}{\underline{\underline{\mathbf{M}}}}
\newcommand{\CH}{\underline{\underline{{{H_{\a,\b}}}}}}
\newcommand{\Hh}{{H_{\a,\b}}}
\newcommand{\Hn}{{H}}
\newcommand{\HF}{{{H}_F}}
\newcommand{\HC}{{H}_C}
\newcommand{\HGO}{{H}_{G_0}}
\newcommand{\id}{\operatorname{id}}
\newcommand{\im}{\operatorname{im}}
\newcommand{\gr}{\operatorname{gr}}
\newcommand{\Res}{\operatorname{Res}}
\newcommand{\Hom}{\operatorname{Hom}}
\newcommand{\RHom}{\operatorname{RHom}}
\newcommand{\End}{\operatorname{End}}
\newcommand{\Ext}{\operatorname{Ext}}
\newcommand{\Spec}{\operatorname{Spec}}
\newcommand{\coker}{\operatorname{coker}}
\newcommand{\Rees}{\textnormal{Rees}}
\def\D{\EuScript{D}}
\def\V{\EuScript{M}}
\def\Aa{\mathscr{A}}
\def\Ff{\mathscr F}
\def\loccit{\emph{loc. cit. }}
\def\z{\mathfrak{z}}
\def\r{{R}}
\def\Hhz{{\Hn\otimes_{\z } \Hn^o}}
\def\Hhzo{{\Hn^o\otimes_{\z} \Hn}}
\def\Hz{{\Hn\otimes_{\z } \Hn}}
\def\HFz{{\HF\otimes_{\z } H_F^o}}
\def\HFzm{{\HF\otimes_{\z} \HF}}
\def\HCz{{\HC\otimes_{\z} H_C^o}}
\def\HCzm{{\HC\otimes_{\z} {\HC}}}
\newtheorem{theorem}{Theorem}[section]
\newtheorem{corollary}[theorem]{Corollary}
\newtheorem{lemma}[theorem]{Lemma}
\newtheorem{proposition}[theorem]{Proposition}
\newtheorem{conjecture}[theorem]{Conjecture}
\newtheorem*{theorem*}{Theorem}
\newtheorem*{proposition*}{Proposition}
\newtheorem{fact}[theorem]{Fact}
\theoremstyle{remark}
\newtheorem{definition}[theorem]{Definition}
\newtheorem{remark}[theorem]{Remark}
\theoremstyle{definition}
\def\@tocline#1#2#3#4#5#6#7{\relax
  \ifnum #1>\c@tocdepth 
  \else
    \par \addpenalty\@secpenalty\addvspace{#2}%
    \begingroup \hyphenpenalty\@M
    \@ifempty{#4}{%
      \@tempdima\csname r@tocindent\number#1\endcsname\relax
    }{%
      \@tempdima#4\relax
    }%
    \parindent\z@ \leftskip#3\relax \advance\leftskip\@tempdima\relax
    \rightskip\@pnumwidth plus4em \parfillskip-\@pnumwidth
    #5\leavevmode\hskip-\@tempdima
      \ifcase #1
       \or\or \hskip 1em \or \hskip 2em \else \hskip 3em \fi%
      #6\nobreak\relax
    \dotfill\hbox to\@pnumwidth{\@tocpagenum{#7}}\par
    \nobreak
    \endgroup
  \fi}
\title{Rigid dualizing complexes of affine Hecke algebras}
\author{Sabin Cautis, Rachel Ollivier}
\address{University of British Columbia,  1984 Mathematics Road, Vancouver, BC V6T 1Z2, Canada}
\email{cautis@math.ubc.ca}
\email{ollivier@math.ubc.ca}
\begin{document}

\begin{abstract}
We identify the rigid dualizing complex of the (generic) affine Hecke algebra $H_\q$ attached to a reduced root system and deduce some structural properties as a consequence. For example, we show that the classical Hecke algebra $H_{\q^\pm}$ as well as  $H_\q/\q$ are, under a certain condition on the root system, Frobenius over their centers with Nakayama automorphism given by an explicit involution $\upiota$. 
\end{abstract}

\maketitle

\setcounter{tocdepth}{2}

\tableofcontents

\section{Introduction}

In the introduction we work over a fixed field $\k$ (in the rest of the paper we work over a more general ring $\r$). Attached to a reduced based root system $(X,\Phi,\cX,\cPhi,\Pi)$ one has an affine Coxeter system $(W_\aff, S_\aff)$ and an extended affine Weyl group $W$. The affine Hecke algebra $H_\q$ associated to this sytem is the $\k[\q]$-algebra with basis $\{\tau_w\}_{w \in W}$ and relations
\begin{align*}
	\tau_{w} \tau_{w'} =  \tau_{ww'} \qquad & \text{if $\ell(ww')=\ell(w)+\ell(w')$} \\
	 \tau_{s}^2=(\q-1)\tau_s + \q \qquad & \text{for $s \in  S_\aff$}
\end{align*}
where $\ell$ is the length function on $W$ arising from $(W_\aff,S_\aff)$.

If we invert $\q$ we get the more common affine Hecke algebra $H_{\q^\pm}$ studied in the complex representation theory of $p$-adic reductive groups and in geometric representation theory (\cite{Lu}, \cite{KL} for example). These algebras can be recovered geometrically from categories of constructible sheaves on affine flag varieties or from categories of coherent sheaves on Steinberg varieties. If we set $\q=0$ we get Hecke algebras $H_0$ that appear naturally in the mod-$p$ representation theory of $p$-adic reductive  groups and geometrically as coherent sheaves on affine flag varieties. 

We study a natural graded version $\Hh$ of $H_{\q}$ (\S\ref{subsec:Hecke}) which is defined over $\k[\a,\b]$ with quadratic relations
$$(T_s-\a)(T_s-\b)=0 \qquad \text{ for $s \in S$}.$$
This algebra can be interpreted as the Rees algebra of $H_\q$ with respect to a natural filtration by length. One recovers $H_\q$ by setting $T_s=-\tau_s,\a=-\q,\b=1$.

The algebra $\Hh$ is equipped with an involution $\upiota$ which fixes $\k[\a,\b]$ and satisfies $\upiota(T_s-\a)=-(T_s-\b)$  (see \eqref{f:upiota}). The main result of this paper is the following explicit identification of the rigid dualizing complex $\R_{\Hh}$ of $\Hh$.

\begin{theorem}\label{thm:main}
As $\Hh$-bimodules we have $\R_\Hh \cong (\upiota) \Hh [\rk(X)+2]$ where $(\upiota)$ denotes the left action twisted by $\upiota$. 
\end{theorem}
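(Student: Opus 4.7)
My approach is to build an explicit Koszul-type bimodule resolution of $\Hh$ of length $\rk(X)+2$ and then apply Van den Bergh's Hochschild-duality formalism to extract the rigid dualizing complex.

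First, I would exploit a Bernstein-style PBW decomposition: $\Hh$ is free of rank $|W_0|$ over a polynomial subalgebra $\Aa\subset \Hh$ on $\rk(X)+2$ generators, namely the $\rk(X)$ Bernstein generators arising from the lattice $X$ together with the two deformation parameters $\a$ and $\b$. This realizes $\Hh$ as a twisted smash product over $\Aa$ and gives the Auslander-regular, Cohen-Macaulay properties needed for $\R_\Hh$ to exist and be concentrated in a single degree.

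Second, using this PBW structure I would construct a Koszul-type bimodule resolution
\[
0 \to \Hh \otimes_\Aa \Lambda^{\rk(X)+2} V \otimes_\Aa \Hh \to \cdots \to \Hh \otimes_\Aa V \otimes_\Aa \Hh \to \Hh \otimes_\Aa \Hh \to \Hh \to 0,
\]
where $V$ is the $\k$-span of the $\rk(X)+2$ polynomial generators of $\Aa$. Applying $\Hom_{\Hh^e}(-,\Hh^e)$ to this resolution produces a complex whose cohomology is concentrated in top degree and, by the standard invertible-bimodule argument of Van den Bergh, equals a single invertible bimodule of the form $(\sigma)\Hh$ for some algebra automorphism $\sigma$. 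This identifies $\R_\Hh \cong (\sigma)\Hh[\rk(X)+2]$ with the right shift already correct.

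The main obstacle is pinning down $\sigma=\upiota$. Since $\sigma$ must preserve the polynomial subring $\Aa$ (the sign ambiguities are absorbed into the orientation of $\Lambda^{\rk(X)+2}V$), and since $\k[\a,\b]$ is central, $\sigma$ fixes $\a$ and $\b$. The substantive calculation is on the reflection generators $T_s$: I would reduce to the rank-one subalgebra generated by a single $T_s$ over $\k[\a,\b]$, where the Koszul resolution and the bimodule duality are fully explicit and yield $\sigma(T_s) = \a+\b-T_s$. This matches the defining relation $\upiota(T_s-\a) = -(T_s-\b)$ of $\upiota$ exactly. Finally, rigidity follows formally from $\upiota^2 = \id$ via the standard uniqueness theorem for rigid dualizing complexes, completing the identification of the bimodule twist with $\upiota$.
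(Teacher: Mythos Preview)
There is a genuine gap at the very first step. The Bernstein-style PBW decomposition you invoke does not exist for $\Hh$ over $\r[\a,\b]$: the Bernstein lattice elements $\theta_{\check x}$ are defined using $T_w^{-1}$, and $T_s$ is invertible only after inverting $\a\b$ (since $T_s(T_s-\a-\b)=-\a\b$). Thus there is no commutative subalgebra $\Aa$ on $\rk(X)+2$ polynomial generators over which $\Hh$ is free of rank $|W_0|$, and consequently no Koszul bimodule resolution of the form you write down. The paper makes this point explicitly in Remark~\ref{rem:free}: already for the specialization $H_0$ the natural commutative subalgebra $A_0$ fails to make $H_0$ projective over it, let alone free. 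This is precisely why the paper does \emph{not} use a PBW/Koszul approach but instead builds the resolution \eqref{f:complexH} of length $d=\rk(Q)$ from the Coxeter complex $\Aa$, works relative to the regular central subalgebra $\z=\r[\a,\b][Q^\perp]$, and only afterwards picks up the remaining shift $\rk(Q^\perp)+2$ from $\R_\z$ via Lemma~\ref{lem:dualizing}.

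There is also a secondary gap in your identification of the twist. Checking $\sigma(T_s)=\a+\b-T_s$ for $s\in S_\aff$ only pins down the involution $\iota$ of \eqref{f:invo}, not $\upiota=\iota\circ j_C$. The two differ exactly on the length-zero elements: $\iota(T_\omega)=T_\omega$ whereas $\upiota(T_\omega)=\epsilon_C(\omega)T_\omega$ for $\omega\in\Omega$, where $\epsilon_C$ is the orientation character \eqref{orichar}. Your rank-one reduction never sees $\Omega$, so it cannot detect this sign. In the paper this sign emerges from the orientation twists $(j_F)$ carried by the terms $\Hh(j_F)\otimes_{\HF}\Hh$ of the Coxeter-complex resolution and is tracked through the explicit cokernel computation in Lemma~\ref{lem2}.
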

Rigid dualizing complexes \cite{Ber} (along with earlier work on balanced dualizing complexes \cite{Ye1}) are an attempt to extend Grothendieck duality to non-commutative algebras. In particular, if $A$ is a commutative algebra of finite type over $\k$, its rigid dualizing complex corresponds to $\pi^!(\k)$ where $\pi: \Spec A \to \Spec \k$. We review dualizing complexes in Section \ref{sec:rigiddualizing}.
 
Theorem \ref{thm:main} above follows from Corollary \ref{coro:main} where the $+2$ in the shift is a reflection of working over $\k[\a,\b]$. By base change (Corollary \ref{cor:basechange}), Theorem \ref{thm:main} also identifies the rigid dualizing complexes of related algebras such as $H_{\q^\pm}$ or $H_0$. For example, $\R_{H_0} \cong  (\upiota) H_0 [\rk(X)]$. 

One of the main implications of Theorem \ref{thm:main} is that, under a certain condition on the root lattice $Q = \Z[\Phi]$,  the algebras $H_0$ and $H_{\q^\pm}$ are (free) Frobenius algebras over their centers (Corollary \ref{cor:frob}).

\begin{corollary}\label{cor:main}
If $X/Q$ is a free abelian group then $H_{\q^\pm}$ and $H_0$ are Frobenius algebras over their centers with Nakayama automorphism $\upiota$. 
\end{corollary}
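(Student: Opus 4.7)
The plan is to deduce the Frobenius property formally from the rigid dualizing complex identification. The general principle is the following: for a finite algebra $A$ over a commutative subring $Z$ of its center, when $Z$ is Gorenstein with rigid dualizing complex $\omega_Z \cong Z[d]$ and $A$ is projective over $Z$, the rigid dualizing complex of $A$ computes as $\R_A \cong \Hom_Z(A, Z)[d]$ as $A$-bimodules. Matching this with an identification $\R_A \cong (\upiota)A[d]$ yields an isomorphism $\Hom_Z(A, Z) \cong (\upiota)A$ of $A$-bimodules, which is by definition the statement that $A$ is Frobenius over $Z$ with Nakayama automorphism $\upiota$.

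First I would apply the base change of Corollary \ref{cor:basechange} to Theorem \ref{thm:main}. Specializing $\b = 1$ and keeping $\a$ free gives $\R_{H_{\q^\pm}} \cong (\upiota)H_{\q^\pm}[\rk(X)+1]$ (the shift drops by one due to the specialization of one polynomial variable), and setting $\a = 0$ as well gives $\R_{H_0} \cong (\upiota)H_0[\rk(X)]$, in line with the shift for $H_0$ already recorded in the introduction.

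Second, I would invoke the Bernstein--Lusztig description of the center $Z$ of $H_{\q^\pm}$ (and the analogous description for $H_0$) to identify it as the $W_0$-invariants inside $\k[\q^\pm][X]$, respectively the appropriate $\q=0$ degeneration, where $W_0$ is the finite Weyl group. Under the hypothesis that $X/Q$ is free, a Steinberg-type argument shows that $W_0$ acts on $\Spec \k[X]$ as a reflection group with smooth quotient, so that $Z$ is a polynomial ring of Krull dimension $\rk(X)+1$ (resp. $\rk(X)$) and $H_{\q^\pm}$ (resp. $H_0$) is free of finite rank over $Z$. In particular $\omega_Z \cong Z[d]$ with $d$ matching precisely the shift computed in the first step, and the general principle applies to give $\Hom_Z(A,Z) \cong (\upiota)A$ as bimodules.

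The hardest step will be the second: verifying both that $A$ is free (not merely finitely generated) over $Z$ and that $Z$ itself is a polynomial ring, under the precise hypothesis that $X/Q$ is free. Without freeness of $X/Q$, torsion in $\Omega = X/Q$ enters the extended affine Weyl group, the $W_0$-action on $\Spec \k[X]$ can have non-smooth quotient, and the invariant ring may fail to be regular, so neither the freeness over $Z$ nor the identification $\omega_Z \cong Z[d]$ is available in the simple form needed here.
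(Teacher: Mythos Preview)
Your overall strategy is the same as the paper's: base change the rigid dualizing complex, identify the center as a regular ring via Steinberg, and match shifts to obtain $\Hom_Z(A,Z)\cong(\upiota)A$. The shift bookkeeping is correct, and for $H_{\q^\pm}$ your sketch goes through essentially as written (the classical argument shows $H_{\q^\pm}$ is free over $A_{\q^\pm}\cong k[\q^\pm][\cX]$, and Pittie--Steinberg then gives freeness over $Z_{\q^\pm}=A_{\q^\pm}^{W_0}$).

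The genuine gap is in your treatment of $H_0$. You assert that ``$H_0$ is free of finite rank over $Z$'' follows from a Steinberg-type argument, but this is precisely the step that fails at $\q=0$. The analogue $A_0\subset H_0$ of the Bernstein subalgebra exists (Vign\'eras), but $H_0$ is not projective, not even flat, over $A_0$ (see the paper's Remark~\ref{rem:free} and the reference to \cite{GL3}). So you cannot chain Pittie--Steinberg through $A_0$ to reach $Z_0$. The paper closes this gap indirectly: since $\R_{H_0}\cong(\upiota)H_0[\rk(X)]$ is concentrated in a single degree and $Z_0\cong k[\cX]^{W_0}$ is regular and connected (this is where the hypothesis $X/Q$ free is used), the non-commutative miracle flatness of Proposition~\ref{prop:miracle} forces $H_0$ to be projective over $Z_0$. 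Freeness is then obtained from projectivity because $Z_0$ is a tensor product of a polynomial ring and a Laurent polynomial ring over the field $k$ (the paper cites Gabber \cite{G}). Note in particular that the dualizing complex identification is not just the endpoint of the argument but is itself the input that produces projectivity over the center; your proposal treats projectivity as an independent fact to be established first, and that is where it breaks for $H_0$.

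Two minor corrections: the center involves $k[\cX]^{W_0}$, not $k[X]^{W_0}$; and $Z_0$ is not literally a polynomial ring but a tensor of a polynomial ring with a Laurent polynomial ring in $\rk(Q^\perp)$ variables---still smooth with $\R_{Z_0}\cong Z_0[\rk(X)]$, so your shift-matching survives.
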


The centers  of $H_{\q^\pm}$  and $H_0$ are known to be isomorphic to  $\k[\cX]^{W_0}[\q^\pm]$ (\cite{Lu}) and  $\k[\cX^+]$  (\cite{Ollcompa}) respectively, where $W_0$ is the finite Weyl group and $\cX^+$ the semigroup of dominant coweights. This suggests that the center of $\Hh$ should be isomorphic to $\k[\a,\b][\cX]^{W_0}$ (Conjecture \ref{conj:center}). Assuming this conjecture, Theorem \ref{thm:main} likewise implies that $\Hh$ is Frobenius over its center with Nakayama automorphism $\upiota$ (again if $X/Q$ is free). 

The Frobenius structure from Corollary \ref{cor:main} is difficult to see directly. When the root system is   associated to $\rm {SL}_2$, the trace morphism from $H_0$ to its center was worked out explicitly in \cite[Prop. 2.13]{OS2}. Generally we do not have an explicit description of the trace maps from $H_{\q^\pm}$ or $H_0$ to their centers. This is because the general structure of $\Hh$ over its center is difficult to study. For example, it is easy to see that in general $\Hh$ cannot be a matrix algebra over its center because one can find one dimensional characters such as $T_s \mapsto a$, $\a \mapsto a$ for any $a \in \k$.  

Even deciding if $\Hh$ is projective over its center is non-trivial. Classical results (\cite{Lu}) tell us that $H_{\q^\pm}$ contains a commutative subalgebra $A_{\q^\pm}$ over which it is finite and projective. If $X/Q$ is free then $A_{\q^\pm}$ can be shown to be projective over the center which explains why $H_{\q^\pm}$ is projective over its center. Likewise, $H_0$ also contains a natural commutative subalgebra $A_0$ (\cite{Vigann}) but $H_0$ is no longer free (or even flat) over $A_0$ (\cite{GL3}). Nevertheless, assuming $X/Q$ is free, $H_0$ remains projective over its center (Proposition \ref{prop:projective}). The argument for this is indirect, using Theorem \ref{thm:main} together with the non-commutative version of Hironaka's criterion (or miracle flatness) from Proposition \ref{prop:miracle}. 

The representation theory of $\Hh$ is sensitive to the parameters $\a,\b$. The fact that $(\upiota) \Hh[\rk(X)+2]$ is a rigid dualizing complex captures many features of $\Hh$ in a uniform way. For example, by Proposition \ref{prop:fin-module}, if $M$ is a finite dimensional $\Hh$-module then 
$$\Ext_{\Hh}^i(M,\Hh) \cong 
\begin{cases}
(\upiota) M^\vee & \text{ if } i = \rk(X)+2 \\
0 & \text{ otherwise }
\end{cases}$$
Similar results hold by base change for $H_{\q^\pm}$ and $H_0$. This result was obtained previously in \cite[Cor. 6.12, Cor. 6.17]{OS1} by more detailed analysis of the  Iwahori-Hecke $k$-algebra of a connected split semisimple group $G$ over a $p$-adic field. The argument above explains how it is a formal consequence of Theorem \ref{thm:main}. 

The key tool used to prove Theorem \ref{thm:main} is a certain resolution of $\Hh$ constructed in Section \ref{sec:reso} as follows. Consider the Coxeter complex $\Aa$ associated to $(\Waff, \Saff)$. It is a polysimplicial complex of dimension $d=\rk(Q)$. We define a right $\Hh$-equivariant coefficient system $\CH$ on $\Aa$ which  is locally constant in the sense of Remark \ref{rema:locc}. It yields an augmented complex  
\begin{equation}\label{f:complexCint}
0 \longrightarrow C_c^{or} (\mathscr{A}_{(d)}, \CH) \longrightarrow \ldots \longrightarrow C_c^{or} (\mathscr{A}_{(0)}, \CH)   \longrightarrow \Hh \longrightarrow 0 \ 
\end{equation}  
of oriented chains which is exact because of the contractibility of certain subcomplexes of $\Aa$. In Proposition \ref{prop:isoBimo} we show that 
$$C_c^{or} (\mathscr{A}_{(i)}, \CH) \cong \bigoplus_{F \in \mathscr F_i} \Hh (j_F) \otimes _\HF \Hh$$
where $\mathscr F_i$ is a set of facets of dimension $i$ in the standard chamber, $\HF$ is the (finite) Hecke algebra attached to $F$ and $(j_F)$ is the twist of the action of $\HF$ by a certain orientation character. Subsequently, \eqref{f:complexCint} becomes an exact complex of $\Hh$-bimodules (cf. Corollary \ref{cor:main1})
\begin{equation}\label{f:complexHint}
0 \longrightarrow \bigoplus_{F\in \mathscr F_d}\Hh(j_F)\otimes_{\HF} \Hh \longrightarrow \ldots \longrightarrow\bigoplus_{F\in \mathscr F_0}\Hh(j_F)\otimes_{\HF} \Hh\longrightarrow  \Hh\longrightarrow 0.
\end{equation}

This resolution is inspired by \cite{OS1} which proves \eqref{f:complexHint} for the Iwahori Hecke $k$-algebra of a connected split reductive group $G$ over a $p$-adic field $\mathfrak F$, namely for the specialization of  $H_{\q}$ to  $\q\mapsto p^f$ where $p^f$ is the size of the residue field of $\mathfrak F$. The construction in \cite{OS1} is different in that one starts from a resolution of the smooth representation $k[G/I]$ of $G$ given in \cite{SS}, where $I$ is an Iwahori subgroup of $G$. Then passing to $I$-invariants gives a resolution of $H_{p^f} \cong k[I\backslash G/I]$ as an $H_{p^f}$-bimodule. 
 
\subsection{Acknowledgements}
We would like to thank Amnon Yekutieli and Eric Vasserot for their generous answers that helped clarify our thoughts. The authors are grateful for the support of the Fondation des Sciences Math\'ematiques de Paris, Universit\'e d'Orsay and the Ecole Normale Sup\'erieure-PSL for support and a serene and stimulating work environment. 

\section{Affine Hecke algebras} \label{sec:not}

\subsection{Based root systems} \label{sec:rootsystem}

Consider a (reduced) based root system $(X, \cX, \Phi, \check\Phi, \Pi)$  (cf. \cite[1.1]{Lu}). Then $X$ and $\cX$ are free abelian groups of finite rank equipped with a perfect pairing $\la -,- \ra : X \times \cX \to \Z$. The finite sets $\Phi \subset X$ and $\cPhi \subset \cX$ are the sets of roots and coroots. There is a bijection $\alpha \leftrightarrow \check\alpha$ such that $\la \alpha ,\check\alpha \:\ra = 2$. For  every $\alpha\in \Phi$, the reflections 
 $$s_\alpha: X\rightarrow X ,\: x\mapsto x- \langle x, \check\alpha\rangle\alpha\quad \text{ resp. \:\:\:$ s_{\check\alpha}: \cX\rightarrow \cX , \: \check x\mapsto \check x- \langle  \alpha , \check x \rangle \check\alpha$}$$ 
preserve $\Phi$ and $\Phi^\vee$ respectively.  The base $\Pi \subset \Phi$ consists of simple roots and defines the sets $\Phi^+$ and $\Phi^-$ of positive and negative roots.

Denote by $Q = \Z[\Phi]$ the root lattice. We let $Q^\perp:=\{\check  x \in \cX: \la \alpha, \check x \ra=0, \forall \alpha \in \Phi\}$ and 
$\cX^+ = \{\check x \in \cX: \la \alpha, \check x \ra \ge 0, \forall \alpha \in \Phi \}$  the set of dominant  coweights.

Define the set of affine roots by  ${\Phi_\aff}=\Phi\times \mathbb Z={\Phi_\aff^+}\coprod {\Phi_\aff^-}$ where
$${\Phi_\aff^+}:=\{(\alpha , r),\: \alpha \in\Phi, \,r>0\}\cup\{(\alpha ,0),\, \alpha \in\Phi^+\}, \quad{\Phi_\aff^-}:=\{(\alpha , r),\: \alpha \in\Phi, \,r<0\}\cup\{(\alpha ,0),\, \alpha \in\Phi^-\}.$$
There is  a partial order on $\Phi$ given by $\alpha \preceq \beta$ if and only if $\beta -\alpha $ is a linear combination with (integral) nonnegative coefficients of elements in $\Pi$. Denote by $\Pi _m$ the set of roots that are minimal elements  for $\preceq$. The set of simple affine roots is  $\Pi _\aff:=\{(\alpha , 0),\: \alpha \in\Pi \}\cup\{(\alpha ,1),\, \alpha \in\Pi _m\}$. 

We denote by $W_0$ the finite Weyl group, namely the subgroup of ${\rm GL}(X)$ generated by $\{s_\alpha\}_{\alpha\in \Phi}$. Let $S_0:=\{s_\alpha\}_{\alpha\in \Pi} $. Then $(W_0, S_0)$ is a (finite) Coxeter system. The extended affine Weyl group is $W = W_0 \ltimes \cX$. An element  $w_0 \check x\in W_0 \ltimes \cX$ acts on $\Phi_\aff$ by $w_0\check x: (\alpha , r)\mapsto (w_0\alpha, r- \la  \alpha, \check x \ra)$.

The length $\ell$ on the Coxeter system $({W_0}, S_0)$ extends to $W $ in such a way that,  the length of $w\in W $ is the number of affine roots   $A \in {\Phi_\aff^+}$ such that $w(A) \in \Phi_\aff^-$. For any $A \in \Pi _\aff$ and $w \in W$ it satisfies
\begin{equation}\label{add}
\ell(w s_A)= 
   \begin{cases}
       \ell(w)+1 & \textrm{ if }w (A)\in {\Phi_\aff^+},\\  \ell(w)-1 & \textrm{ if }w (A)\in {\Phi_\aff^-}.
    \end{cases}
\end{equation}
where $s_A$ is the  affine reflection associated to $A$. 

Let $  \Saff := \{s_A\}_{A \in \Pi_\aff}$ and define the affine Weyl group as $\Waff := \la s_A  \ra_{A \in \Phi_\aff}  \subset W$. The pair $(\Waff, \Saff)$ is a Coxeter system with length function $\ell$ (\cite[V.3.2 Thm.\ 1(i)]{Bki-LA}). If $\Omega \subset W$ is the abelian subgroup consisting of length zero elements then $W \cong \Omega \ltimes \Waff$  (\cite[1.5]{Lu}). 

The action of $\Omega$ on $W$ by conjugation preserves $S_\aff$. Consider the character 
\begin{equation}
 W\rightarrow W/\Waff\cong \Omega \rightarrow \{\pm 1\}\label{orichar}
 \end{equation}
 where the second map is the signature of $\Omega$ acting on $S_\aff$. In \S\ref{subsec:finiteH} we will denote this character by $\epsilon_C$ and it will be used in Remark \ref{rema:JC} to define the involution \eqref{f:upiota}. 
 
\subsection{Generic Hecke algebras: definitions and basic properties\label{subsec:Hecke}}

Fix a Noetherian ring $\r$.  The  extended affine Hecke algebra $H_\q$ is the $\r[\q]$-algebra which is free as an $\r[\q]$-module with basis $\{ \tau_w\}_{w \in W}$ and subject to relations 
\begin{align*} 
\tau_v  \tau_w =  \tau_{vw} & \quad \text{ if } \ell(vw)=\ell(v)+\ell(w)  \\
(\tau_s - \q)( \tau_s + 1)=0 & \quad \text{ for  } s \in \Saff.
\end{align*}
We denote $H_{\q^\pm} := H_\q \otimes_{\r[\q]} \r[\q^{\pm 1}]$ and $H_0 := H_\q/\q$. There is a natural filtration on $H_\q$ such that $F_iH_\q$ is the free $\r$-module with basis $\{\q^j  \tau_w: j+\ell(w) \le i\}$. The associated Rees algebra with parameter $x$ is the graded algebra $\Rees(H_\q) := \bigoplus_{i \ge 0} x^i F_iH_\q$ equipped with the natural multiplication.

On the other hand, consider the $\r[\a,\b]$-algebra $\Hh$ which is a free $\r[\a,\b]$-module with basis $\{T_w, \: w \in W\}$ and subject to relations
\begin{align}
\label{braid} T_v T_w = T_{vw} & \quad \text{ if } \ell(vw)=\ell(v)+\ell(w) \quad \emph{(braid relations)} \\
\label{quad} (T_{s}-\a)(T_s-\b)=0 & \quad \text{ for } s \in \Saff \quad \emph{(quadratic relations)}
\end{align}
It is generated as an $\r[\a,\b]$-algebra by $\{T_s\}_{s \in \Saff}$ and $\{\tau_\omega\}_{\omega \in \Omega}$. One can check that $\Hh \cong \Rees(H_\q)$ by taking $\a \mapsto - \q x$, $\b \mapsto x$ and $T_w \mapsto (-x)^{\ell(w)} \tau_w$ for $w \in W$.

\begin{remark}  
By \cite[IV Exercices \S2, 23]{Bki-LA}, there is a unique $\r[\a,\b]$-algebra $H_\aff$ with basis $\{T_w\}_{w \in \Waff}$ satisfying \eqref{braid} and \eqref{quad}. One can then check that $\Hh \cong H_\aff \otimes_{\r[\a,\b]} \r[\a,\b][\Omega]$ with the product on the right given by $(T_v \otimes T_\omega) \cdot (T_{v'}\otimes T_{\omega'}) = (T_v T_{\omega v'\omega^{-1}}) \otimes (T_\omega T_{\omega'})$.
\end{remark}

\begin{remark}\label{rem:H00}
Notice that $H_{\a,1} \cong H_\q$ and $H_{0,1} \cong H_0$ while $H_{0,0}$ recovers the affine nil-Coxeter algebra. 
\end{remark}

\begin{proposition}
There exists an $R[\a,\b]$-linear algebra involution $\iota$ of $\Hh$ determined by
\begin{equation}\label{f:invo}
 \iota(T_\omega)= T_\omega \text{ if $\ell(\omega)=0$}\quad\text{ and }\quad \iota(T_s-\a)=-(T_s-\b) \text{ for $s\in \Saff$ } .
 \end{equation}
\end{proposition}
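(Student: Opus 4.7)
The plan is to rewrite the prescribed formula $\iota(T_s) = \a+\b-T_s$ as a scalar multiple of $T_s^{-1}$, work over a localization in which $T_s$ is invertible, and then restrict. The key observation is that the quadratic relation \eqref{quad} gives
$$T_s(\a + \b - T_s) \;=\; (\a+\b)T_s - T_s^2 \;=\; \a\b,$$
so in any ring where $\a\b$ is invertible, $T_s$ is a unit with $T_s^{-1} = (\a\b)^{-1}(\a+\b-T_s)$. Since $\a\b$ is a nonzerodivisor in $\r[\a,\b]$ and $\Hh$ is free over $\r[\a,\b]$, the canonical map $\Hh \hookrightarrow \Hh[(\a\b)^{-1}]$ is an embedding, and in the localization the prescribed rule becomes $\iota(T_s) = \a\b\, T_s^{-1}$, $\iota(T_\omega) = T_\omega$.

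The second step is to verify that this rule on generators extends to an $\r[\a,\b]$-linear algebra endomorphism of $\Hh[(\a\b)^{-1}]$ by checking that the quadratic, braid, and $\Omega$-relations from the presentation of $\Hh$ are preserved. The quadratic relation follows immediately from $\iota(T_s)-\a = -(T_s-\b)$ and $\iota(T_s)-\b = -(T_s-\a)$. The $\Omega$-compatibility, in the form $T_\omega T_s T_\omega^{-1} = T_{\omega s\omega^{-1}}$, is preserved because inverting both sides gives $T_\omega T_s^{-1} T_\omega^{-1} = T_{\omega s\omega^{-1}}^{-1}$.

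The only point requiring real care -- and the main obstacle I anticipate -- is the braid relation, which after cancelling the scalar $(\a\b)^m$ reads $\underbrace{T_s^{-1}T_{s'}^{-1}\cdots}_{m} = \underbrace{T_{s'}^{-1}T_s^{-1}\cdots}_{m}$ in the localization. This follows by inverting the original braid identity $\underbrace{T_sT_{s'}\cdots}_{m} = \underbrace{T_{s'}T_s\cdots}_{m}$: reversing the factors and inverting each yields precisely the braid relation for the inverses (with a routine parity-of-$m$ bookkeeping determining which generator appears first on each side). Once $\iota$ is constructed on $\Hh[(\a\b)^{-1}]$, it restricts to an endomorphism of $\Hh$ since $\iota(T_s) = \a+\b-T_s \in \Hh$ and $\iota(T_\omega) = T_\omega \in \Hh$ generate $\Hh$; and $\iota^2 = \id$ holds on generators because $\iota^2(T_s) = \a+\b-(\a+\b-T_s) = T_s$, so $\iota$ is indeed an involution.
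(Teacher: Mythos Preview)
Your proof is correct and self-contained. The paper takes a different route: it invokes the known involution $\tau_w \mapsto (-\q)^{\ell(w)}\tau_{w^{-1}}^{-1}$ on $H_{\q^\pm}$ from \cite{Vigann}, observes that it restricts to $H_\q$ and respects the length filtration, and then lifts it to $\Rees(H_\q)\cong\Hh$. Conceptually both arguments exploit the same phenomenon---after inverting the right scalar, $\iota(T_s)$ is a scalar multiple of $T_s^{-1}$, so the braid relations are automatic---but you localize $\Hh$ at $\a\b$ and verify the presentation directly, whereas the paper outsources this verification to the classical $H_{\q^\pm}$ setting and then transports the result through the Rees construction. Your approach has the advantage of being independent of the external reference and of the identification $\Hh\cong\Rees(H_\q)$; the paper's approach is terser because the braid check for $\tau_w\mapsto(-\q)^{\ell(w)}\tau_{w^{-1}}^{-1}$ is already on record.
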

\begin{proof}
Recall that $H_{\q^\pm}$ is equipped with a $\r[\q]$-linear algebra involution $\tau _w \longmapsto (-\q)^{\ell(w)}  \tau _{w^{-1}}^{-1}$ which restricts to an involution $\iota_1$ of $H_\q$ (\cite[Cor. 2]{Vigann}). This involution preserves the filtration $(F_iH_\q)_{i\geq 0}$ and induces an involution $\iota$  on $\Rees(H_\q) \cong \Hh$. Since $\iota_1( \tau_\omega) =  \tau_\omega$ if $\ell(\omega)=0$ and $\iota_1(\tau _s)= \q-1-\tau _s$ for $s \in \Saff$ it follows that $ \iota(T_\omega)= T_\omega$ if $\ell(\omega)=0$ and $\iota(T_s)=\a+\b-T_s $ for $s \in \Saff$.
\end{proof}

\begin{proposition}\label{prop:finite}
The algebra $H_{0,0}$ is finite over its center which itself is a finitely generated $\r$-algebra. 
\end{proposition}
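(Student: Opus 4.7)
The plan is to exhibit a finitely generated commutative central subalgebra of $H_{0,0}$ over which $H_{0,0}$ is a finite module, and then to invoke the Artin--Tate lemma. For each $\check x \in \cX^+$, set
\[
z_{\check x} := \sum_{\check y \in W_0 \check x} T_{t_{\check y}} \in H_{0,0},
\]
where $W_0 \check x$ denotes the $W_0$-orbit of $\check x$. Note that all $t_{\check y}$ in the sum share the same length $\sum_{\alpha \in \Phi^+}|\langle \alpha,\check y\rangle|$, which is $W_0$-invariant on orbits.

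The key step is to show $z_{\check x} \in Z(H_{0,0})$. For a simple affine reflection $s \in \Saff$ one computes both $T_s z_{\check x}$ and $z_{\check x} T_s$ term-by-term using the length formula \eqref{add}: the contribution of $\check y$ to $T_s z_{\check x}$ is $T_{s t_{\check y}}$ or zero according to whether the length condition is satisfied, and this pairs with the contribution of $s \check y$ (or of $\check y$ itself, when $\check y$ is fixed by $s$) to $z_{\check x} T_s$; a bookkeeping argument across the wall associated to $s$ yields equality. Commutation with $T_\omega$ for $\omega \in \Omega$ is immediate since $\Omega$ permutes $W_0$-orbits. Using that $T_{t_{\check u}} T_{t_{\check v}}$ equals $T_{t_{\check u + \check v}}$ when $\check u, \check v$ share a closed Weyl chamber and vanishes otherwise (by length-additivity for translations), one then verifies $z_{\check x} z_{\check y} = z_{\check x+\check y}$ for $\check x, \check y \in \cX^+$. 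Hence the subalgebra $C \subset Z(H_{0,0})$ generated by the $z_{\check x}$ is isomorphic to the monoid algebra $\r[\cX^+]$, which is finitely generated as an $\r$-algebra since $\cX^+$ is a finitely generated monoid by Gordan's lemma.

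Next, $H_{0,0}$ is shown to be a finite $C$-module, generated by $\{T_w T_\omega : w \in W_0,\, \omega \in \Omega\}$. The argument uses the decompositions $W = W_0 \ltimes \cX$ and $W = \Omega \ltimes W_\aff$, together with the length identity $\ell(w_0 t_{\check x}) = \ell(w_0) + \ell(t_{\check x})$ for strictly dominant $\check x \in \cX^+$ and $w_0 \in W_0$: then $T_{w_0 t_{\check x}} = T_{w_0} T_{t_{\check x}}$, and $T_{t_{\check x}}$ occurs as a summand of the central element $z_{\check x}$. For general $w = w_0 t_{\check z} \omega \in W$, one shifts the translation part $\check z$ into the dominant cone by multiplying by a suitable $z_{\check y}$ (for $\check y \in \cX^+$ large), and recovers $T_w$ as a $C$-combination of the listed generators, iterating the procedure as needed to handle lower-length corrections. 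Granting this finiteness, $H_{0,0}$ is a finite module over the central subalgebra $C$, hence a fortiori over $Z(H_{0,0})$. Since $H_{0,0}$ is itself finitely generated as an $\r$-algebra by the finite sets $\{T_s\}_{s \in \Saff}$ and $\{T_\omega\}_{\omega \in \Omega}$, the Artin--Tate lemma then implies that $Z(H_{0,0})$ is finitely generated as an $\r$-algebra. The most delicate step is the finite generation of $H_{0,0}$ over $C$, which requires careful length bookkeeping in view of the chamber-support rigidity of products in $H_{0,0}$.
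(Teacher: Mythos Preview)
Your strategy coincides with the paper's: you identify the same central subalgebra (your $C$ is exactly the paper's $A_{0,0}^{W_0}$, since $A_{0,0}$ is the span of all $T_{t_{\check y}}$ and the $W_0$-invariants are precisely the orbit sums $z_{\check x}$), and the final step via Artin--Tate is equivalent to the paper's Noetherian argument. Your verification of centrality and of $z_{\check x}z_{\check y}=z_{\check x+\check y}$ is correct.

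However, the crucial finiteness step has a genuine gap. The proposed generating set $\{T_{w}T_{\omega}: w\in W_0,\ \omega\in\Omega\}$ is too small. Take the adjoint type $A_1$ system, so $\Omega=1$, $W_0=\{1,s\}$, $S_\aff=\{s,s'\}$, and $z_n=T_{(s's)^n}+T_{(ss')^n}$. Then $z_n\cdot 1=z_n$ and $z_n\cdot T_s=T_{(ss')^n s}$, so $C\cdot 1+C\cdot T_s$ contains no element whose reduced expression begins with $s'$; in particular $T_{s'}\notin C\cdot 1+C\cdot T_s$. Your inductive sketch also cannot terminate as written: writing $T_{w_0}z_{\check x}=T_{w_0 t_{\check x}}+\sum_{\check y\neq\check x}T_{w_0}T_{t_{\check y}}$, the nonzero correction terms satisfy $\ell(w_0 t_{\check y})=\ell(w_0)+\ell(t_{\check y})=\ell(w_0)+\ell(t_{\check x})$, so they have the \emph{same} length as the target, and there is no decreasing invariant to induct on. The underlying issue is that the intuition of ``shift into the dominant cone and shift back'' relies on invertible translation elements, which exist in $H_{\q^\pm}$ but not in $H_{0,0}$.

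The paper avoids this by a two-step route: first $H_{0,0}$ is shown to be finite over the larger commutative subalgebra $A_{0,0}$ spanned by \emph{all} $T_{t_{\check x}}$ (citing \cite{OS1}), and then $A_{0,0}$ is finite over $A_{0,0}^{W_0}=C$ by the standard argument for invariants. Passing through $A_{0,0}$ is exactly what supplies the missing module generators (for instance, $T_{t_{\check\alpha}}$ in the rank-one example, from which one recovers $T_{s'}=T_{t_{\check\alpha}}T_s$ or its analogue). If you want to salvage a direct argument, you must enlarge your generating set accordingly.
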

\begin{proof}
The relations in $H_{0,0}$ are
\begin{equation*}T_{v} T_w= \begin{cases}T_{vw}& \text{ if  $\ell(vw)=\ell(v)+\ell(w)$}\cr0&\text{ otherwise} \end{cases}\end{equation*}
for any $v,w \in W$. It contains the commutative subalgebra $A_{0,0}$ with $\r$-basis $\{T_{\check x}\}_{{\check x} \in \check X}$. Following  the proof of \cite[Prop. 8.5]{OS1}  we find that $H_{0,0}$ is finite over $A_{0,0}$ which is itself a finitely generated $\r$-algebra. Consider the subalgebra $ A_{0,0}^{W_0}$ where $W_0$ acts via its natural action on $\cX$.  Then 
$A_{0,0}^{W_0}$ is a finitely generated $\r$-algebra and $A_{0,0}$ is finite over it. Therefore, $H_{0,0}$ is finite over  $A_{0,0}^{W_0}$. 

We now check that $A_{0,0}^{W_0}$ is central in $H_{0,0}$. For $\omega \in \Omega$ and ${\check x} \in \check X$ we have $T_\omega T_{\check x} = T_{\omega {\check x} \omega^{-1}} T_\omega$ where $\omega {\check x} \omega^{-1}$ is a $W_0$-conjugate of ${\check x}$. Thus $T_\omega$ commutes with $A_{0,0}^{W_0}$. On the other hand, for $s \in \Saff$ we have $\ell(s{\check x})= \ell({\check x})+1$ (resp.  $\ell({\check x}s)= \ell({\check x})+1$) if and only is $\la \alpha_s, {\check x} \ra \geq 0$ (resp. $\la \alpha_s, {\check x} \ra \leq 0$). In this case $T_s T_{\check x}= T_{s{\check x}}$ (resp. $ T_{\check x} T_s = T_{{\check x}s}$). Otherwise $T_{s} T_{\check x}=0$ (resp. $T_{{\check x}} T_s=0$). This shows that $T_s$ commutes with the subspace $A_{0,0}^{\la s \ra}$ of $s$-invariants in $A_{0,0}$ and therefore it commutes with $A_{0,0}^{W_0}$.

Thus $A_{0,0}^{W_0}$ is contained in the center of $H_{0,0}$. Since $H_{0,0}$ is finite over $A_{0,0}^{W_0}$ it is also finite over its center. Moreover,  $A_{0,0}^{W_0}$ is Noetherian (since $\r$ is Noetherian) which   implies that the center is also finite over $A_{0,0}^{W_0}$. This means that the center of $H_{0,0}$ is a finitely generated $\r$-algebra.
\end{proof}

\begin{corollary} \label{coro:noeth}
The algebra $\Hh$ is Noetherian.
\end{corollary}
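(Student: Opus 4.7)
The plan is to exhibit $\Hh$ as a positively filtered $\r[\a,\b]$-algebra whose associated graded is isomorphic to $H_{0,0}\otimes_\r\r[\a,\b]$, and then to combine Proposition \ref{prop:finite} with the standard fact that a positively filtered ring with Noetherian associated graded is itself Noetherian.

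The natural choice is the length filtration
\[
F_i\Hh := \sum_{\ell(w)\le i} \r[\a,\b]\,T_w,
\]
with $\a$ and $\b$ placed in $F_0$. Multiplicativity $F_i\cdot F_j\subset F_{i+j}$ reduces, via \eqref{braid} and an induction on $\ell(v)$, to the claim that $T_s\cdot T_u\in F_{\ell(u)+1}$ for every $s\in\Saff$ and $u\in W$; the sole non-trivial case is $\ell(su)=\ell(u)-1$, in which writing $T_u=T_sT_{su}$ and applying \eqref{quad} gives $T_sT_u = (\a+\b)\,T_u - \a\b\,T_{su}\in F_{\ell(u)}$. An induction on length using the same identity shows further that the image of $T_vT_w$ in $\gr^{\ell(v)+\ell(w)}(\Hh)$ equals $T_{vw}$ if $\ell(vw)=\ell(v)+\ell(w)$ and vanishes otherwise, and therefore that the assignment $T_w\mapsto T_w$ induces an $\r[\a,\b]$-algebra isomorphism $\gr(\Hh)\cong H_{0,0}\otimes_\r\r[\a,\b]$.

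By Proposition \ref{prop:finite}, $H_{0,0}$ is a finite module over a finitely generated commutative $\r$-subalgebra $Z$; since $\r$ is Noetherian, Hilbert's basis theorem yields that $Z[\a,\b]$ is Noetherian, and hence $H_{0,0}\otimes_\r\r[\a,\b]$, being a finite $Z[\a,\b]$-module, is Noetherian. The standard filtered-to-graded theorem (see e.g.\ McConnell--Robson, \emph{Noncommutative Noetherian Rings}, Theorem 1.6.9) then implies that $\Hh$ itself is left and right Noetherian. The only real technical content is the computation with the quadratic relation needed to verify multiplicativity of the length filtration and to identify $\gr(\Hh)$; the remainder is a formal consequence of Proposition \ref{prop:finite}.
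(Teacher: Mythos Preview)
Your proof is correct. Both your argument and the paper's reduce to the Noetherianity of $H_{0,0}$ established in Proposition~\ref{prop:finite}, but they package the reduction differently. The paper exploits the identification $\Hh\cong\Rees(H_\q)$ already made in \S\ref{subsec:Hecke}: this gives $\Hh$ an $\mathbb{N}$-grading in which $\a,\b$ are central of degree~$1$, and one concludes by invoking \cite[Lemma~8.2]{ATVdB} applied to the quotient $\Hh/(\a,\b)\cong H_{0,0}$. You instead place $\a,\b$ in filtration degree~$0$ and pass to the associated graded $H_{0,0}\otimes_\r\r[\a,\b]$, then appeal to the standard filtered-to-graded Noetherianity criterion. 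Your route is slightly more self-contained (it avoids the Rees identification and the specific lemma from \cite{ATVdB}), at the cost of having to verify multiplicativity of the length filtration and identify $\gr(\Hh)$ by hand; incidentally, this very filtration and its associated graded reappear in the paper's proof of Proposition~\ref{prop:Hdiff}, so your computation is not wasted.
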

\begin{proof}
Recall that $\Hh = \Rees(H_\q)$ is a graded algebra with $\a,\b$ homogeneous of degree one. By Proposition \ref{prop:finite} we know $H_{0,0}$ is Noetherian. It follows that $\Hh$ is also Noetherian by applying \cite[Lemma  8.2]{ATVdB}.
\end{proof}

We denote $\z:=\r[\a,\b][{Q^\perp}] \subset \Hh$. This is a central subalgebra of $\Hh$. Since $Q^\perp \subset W$ is a free group of finite rank $\z$, is a Laurent series ring over $\r[\a,\b]$. 

\subsection{Coxeter complexes}\label{313}

Consider the affine Coxeter complex $\Aa$ associated to $(\Waff, \Saff)$ (cf. \cite[Ch. V. \S3]{Bki-LA} and \cite[I.3.1]{BT}). It is a polysimplicial complex of dimension $d := \rk(Q)$ (\cite[I.3.4]{BT}). The group $W$ acts on $\Aa$ and we  call $C$ the chamber  of $\Aa$ whose stabilizer in $W$ is $\Omega$ (\cite[VI, \S2.3]{Bki-LA}). 

\begin{remark}\label{rema:4.4}
The group $\Waff$ acts simply transitively  on the chambers  and given a facet ${F}$ there is a unique facet contained  in $\overline C$ which is $\Waff$-conjugate to $F$ (\cite[V.3.2 Thm 1]{Bki-LA}, \cite[I.3.5]{BT}).
\end{remark}

Let $i>0$ and $F$ a facet of dimension $i$. The $(i+1)!$ arrangements of the $i+1$ vertices of $F$ decompose into two classes under the action of the even permutations. These two classes  are called the orientations of  $F$. 
For a formal  definition of the oriented facets of dimension $\geq 0$, we refer to \cite[II.1]{SS}. We will denote an oriented facet by $(F, c)$ as in \loccit  with the convention that the $0$-dimensional faces always carry the trivial orientation. A facet $F$ of dimension $\geq 1$  has two possible orientations, $(F,c)$ and $(F,-c)$.   The  orientation $(F,c)$ induces an orientation $(F, c)|_{F'}$ on each of the facets  $F'$ of $F$ of dimension $i-1$. 
It satisfies $(F, -c)|_{F'}=-(F, c)|_{F'}$.

\medskip

To  a facet ${{F}}$ contained in $\overline C$ we associate the subset $S_{{F}} \subset \Saff$ fixing ${{F}}$  (pointwise) and the corresponding subset $\Pi_{{F}} \subset \Pi$. Let  $W^0_{{F}} \subset \Waff$ be the finite subgroup generated by $S_{{F}}$ (\cite[V.3.6 Prop.\ 4]{Bki-LA}). The pair $(W ^0_{{F}}, S_{{F}})$ is a Coxeter system with length function $\ell|_{W^0_F}$ (\cite[IV.1.8 Cor.\ 4]{Bki-LA}). We let $\Phi_F := \{ A \in \Phi_\aff: s_A \text{ fixes } F\}$ and $\Phi _{{F}}^+ := \Phi _{{F}} \cap \Phi ^+_\aff$. 

\begin{proposition}Let ${{F}}$ be  a facet contained in  $\overline C$.\label{representants}
\begin{itemize}
\item[i.] The set $\D_{{F}} := \{ d \in W: d(\Phi_{{F}}^+) \subset {\Phi}_\aff^+ \}$ is a system of representatives of the left cosets $W/ W^0_{{F}}$. It satisfies
\begin{equation}\label{additive0}
\ell( dw)=\ell(d)+\ell(w)
\end{equation}
for any $w\in W^0_{{F}}$ and $d \in \D _{{F}}$. In particular, $d$ is the unique element with minimal length in $d\WFf$.
\item[ii.] If $s \in S_\aff$ and $d \in \D _{{F}}$ then we are in one of the following situations:
\begin{itemize}
\item[$\textrm{\bf{A}.}$] $\ell(s d)=\ell(d)-1$ in which case $sd\in \D _{{F}}$.
\item[$\textrm{\bf{B}.i.}$] $\ell(sd)=\ell(d)+1$ and $sd\in \D _{{F}}$.
\item[$\textrm{\bf{B}.ii.}$] $\ell(sd)=\ell(d)+1$ and $sd\in d\WFf$.
\end{itemize}
\end{itemize}
\end{proposition}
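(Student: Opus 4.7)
The plan is to identify $W^0_F$ as a standard parabolic subgroup of $\Waff$ (generated by $S_F \subset \Saff$, since $F \subset \overline C$) and then apply the classical theory of minimum length left coset representatives, slightly extended to accommodate the semidirect product factor $\Omega$.

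For part (i), I would first show that the condition $d(\Phi_F^+) \subset \Phi_\aff^+$ is equivalent to the \emph{a priori} weaker condition $d(\Pi_F) \subset \Phi_\aff^+$. The key observation is that any $\beta \in \Phi_F^+$ is a nonnegative integer combination of simple roots in $\Pi_F$, so $d(\beta)$ is a nonnegative integer combination of elements of $\Phi_\aff^+$; since $d$ permutes $\Phi_\aff$, the image $d(\beta)$ is a single root lying in the positive cone, hence in $\Phi_\aff^+$. By \eqref{add} the condition $d(\Pi_F) \subset \Phi_\aff^+$ is equivalent to $\ell(ds) = \ell(d)+1$ for every $s \in S_F$, which by standard Coxeter theory (\cite[IV.1.8]{Bki-LA}) characterizes the unique minimum length representative of the left coset $dW^0_F$ in $\Waff$. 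Since $\Omega$ permutes $\Pi_\aff$ (hence $\Phi_\aff^+$) and consists of length zero elements, writing $d = \omega d_0$ with $\omega \in \Omega$ and $d_0 \in \Waff$ transfers the whole picture to $W$: $\D_F$ is a system of representatives of $W/W^0_F$ consisting of the minimum length element in each coset. The length additivity \eqref{additive0} then follows by induction on $\ell(w)$, the inductive step being that if $w = w's$ with $\ell(w) = \ell(w')+1$ and $s \in S_F$, then $w'(\alpha_s) \in \Phi_F^+$, so $dw'(\alpha_s) \in \Phi_\aff^+$, and \eqref{add} gives $\ell(dw's) = \ell(dw')+1$.

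For part (ii), \eqref{add} shows $\ell(sd) = \ell(d) \pm 1$ according as $d^{-1}(\alpha_s)$ lies in $\Phi_\aff^+$ or $\Phi_\aff^-$. In case A, suppose for contradiction that $sd \notin \D_F$; then there exists $\beta \in \Phi_F^+$ with $sd(\beta) \in \Phi_\aff^-$, and since $d(\beta) \in \Phi_\aff^+$, the only way the simple reflection $s$ can negate the positive root $d(\beta)$ is if $d(\beta) = \alpha_s$. This gives $d^{-1}(\alpha_s) = \beta \in \Phi_F^+ \subset \Phi_\aff^+$, contradicting $d^{-1}(\alpha_s) \in \Phi_\aff^-$. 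Hence $sd \in \D_F$. In case B, if $sd \in \D_F$ we are in B.i; otherwise the same reasoning produces $\beta \in \Phi_F^+$ with $d(\beta) = \alpha_s$, so $\beta = d^{-1}(\alpha_s) \in \Phi_F$, and the reflection $d^{-1}sd = s_\beta$ lies in $W^0_F$, placing $sd$ in $dW^0_F$ (case B.ii). The sub-cases B.i and B.ii are mutually exclusive, for in B.ii, $sd$ and $d$ lie in the same coset with $\ell(sd) > \ell(d)$, so $sd$ cannot be minimal in its coset and in particular cannot lie in $\D_F$.

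The main obstacle is the equivalence in part (i) between the two characterizations of $\D_F$ (via the simple roots $\Pi_F$ versus via all positive roots $\Phi_F^+$ of the parabolic); once that is in place the remainder is a direct case analysis driven by the sign of $d^{-1}(\alpha_s)$.
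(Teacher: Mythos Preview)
Your argument is correct and is exactly the standard Coxeter-theoretic approach (minimal length coset representatives for a parabolic subgroup, combined with the fact that a simple reflection $s_A$ permutes $\Phi_\aff^+\setminus\{A\}$, which follows from $\ell(s_A)=1$); the paper does not give a self-contained proof but simply refers to \cite[Prop.~4.6]{OS1}, remarking that the argument for (ii) there, stated only for $s\in S_F$, goes through verbatim for arbitrary $s\in S_\aff$. Your write-up supplies precisely that extended argument, so there is nothing to compare beyond noting that you have unpacked what the paper leaves as a citation.
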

\begin{proof}
This result follows as in \cite[Prop. 4.6]{OS1}. The main point to note is that the argument for (ii) goes through with $s \in S_\aff$ even if only the case $s \in S_F$ is considered in \cite{OS1}. 
\end{proof}

Denote by $\Omega_{{F}} \subset \Omega$ the subgroup stabilizing $F$. We have $\Omega_{{F}}=\{ \omega\in \Omega, \: \omega S_{{F}} \omega^{-1}= S_{{F}}\}$  and  $ \WFf$ is normalized by $\Omega_{{F}}$ (\cite[\S 4.5]{OS1}). Denote by $\WF \subset W$ the subgroup generated by  $ \WFf$ and  $\Omega_{{F}}$ (it is a semi-direct product of these two subgroups). Note that $\WC=\Omega$ and $\WF\cap \Waff=\WFf$. 
By \cite[Lemma 4.9]{OS1},  we have $\WF = \{w \in W: wF = F\}$.

\begin{proposition} \phantomsection \label{prop:C(F)}
\begin{itemize}\item [i.]
Let $F$ be a a facet of $\Aa$. There a  unique 
chamber $C(F)$   at  closest distance to $C$   (in terms of gallery distance) which contains $F$ in its closure and  a unique
element $d_F$  in $\Waff$ such that $C(F)=d_F C$. We have $d_F\in \D_{F_0}$ where $F_0:= d_F^{-1} F$.
\item[ii.]If  $F$ and $F'$ are two facets such that $F'\subseteq \overline F$ we have $\ell(d_{F'})+\ell( d_{F'}^{-1} d_F)=\ell(d_{F})$ .\end{itemize}
\end{proposition}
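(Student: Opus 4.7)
The plan is to reduce both parts to the Coxeter-theoretic statement of Proposition \ref{representants}(i), using two standard facts about Coxeter complexes: for $d \in \Waff$ the gallery distance from $C$ to $dC$ equals $\ell(d)$, and the $\Waff$-stabilizer of a facet $F_0 \subseteq \overline C$ is the pointwise stabilizer $\WFfO$.

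For part (i), I would start by invoking Remark \ref{rema:4.4} to produce the unique facet $F_0 \subseteq \overline C$ that is $\Waff$-conjugate to $F$. For $d \in \Waff$, the chamber $dC$ contains $F$ in its closure if and only if $d^{-1}F \subseteq \overline C$, and by the uniqueness just mentioned this forces $d^{-1}F = F_0$. Fixing any $w_0 \in \Waff$ with $w_0 F_0 = F$, the set of such $d$ is therefore the single coset $w_0 \WFfO$. Minimizing the gallery distance from $C$ to $dC$ amounts to minimizing $\ell(d)$ over this coset, and Proposition \ref{representants}(i) provides a unique such minimizer, namely the representative in $\D_{F_0}$; I take this element as $d_F$ and set $C(F) := d_F C$. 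The apparent tension between Proposition \ref{representants} (which is stated over $W$) and the parametrization of chambers by $\Waff$ is harmless: because $w_0 \WFfO \subseteq \Waff$, the minimum-length representative automatically lies in $\Waff$.

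For part (ii), I would set $F'_0 := d_F^{-1} F'$. Since $F' \subseteq \overline F$, applying $d_F^{-1}$ gives $F'_0 \subseteq \overline{F_0} \subseteq \overline C$, so $F'_0$ is a facet of $\overline C$ which is $\Waff$-conjugate to $F'$. Uniqueness in Remark \ref{rema:4.4} then forces $F'_0 = d_{F'}^{-1} F'$, so that $d_{F'}^{-1} d_F$ stabilizes $F'_0$ and therefore lies in $W^0_{F'_0}$. Since part (i) gives $d_{F'} \in \D_{F'_0}$, the additivity formula \eqref{additive0} applied to $d_{F'}$ and $d_{F'}^{-1} d_F \in W^0_{F'_0}$ yields exactly the required identity $\ell(d_F) = \ell(d_{F'}) + \ell(d_{F'}^{-1} d_F)$.

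No step in this strategy looks genuinely hard; the most delicate point is simply checking that the standard Coxeter-complex facts used (simple transitivity of $\Waff$ on chambers, gallery distance equals length, setwise stabilizer of a facet in $\overline C$ equals its pointwise stabilizer) combine cleanly, and that the passage from $W$-cosets in Proposition \ref{representants} down to $\Waff$-cosets is automatic because $\WFfO \subseteq \Waff$.
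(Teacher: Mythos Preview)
Your proposal is correct and follows essentially the same route as the paper: for part (ii) the arguments are identical (show $d_{F'}^{-1}d_F\in W^0_{F'_0}$ via the uniqueness in Remark~\ref{rema:4.4}, then apply \eqref{additive0}), and for part (i) the paper likewise reduces to the minimal-length coset representative in $\D_{F_0}$, the only cosmetic difference being that the paper outsources existence and uniqueness of $C(F)$ to \cite[Prop.~4.13-i]{OS1} whereas you derive it directly from the identification of chambers containing $F$ with a single $\WFfO$-coset and the equality of gallery distance with length.
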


\begin{proof} Let $F$ be a facet of $\Aa$.  By 
\cite[Prop. 4.13-i]{OS1},  there is a  unique 
chamber $C(F)$   at  closest distance to $C$   (in terms of gallery distance) which contains $F$ in its closure. 
Since $\Waff$ acts simply transitively  on the chambers, there is a unique $d_F\in \Waff$ such that 
$C(F)=d_FC$. 

 Let $F_0:= d_F^{-1} F$ and  $d\in \D_{F_0}$ such that $d_F= du\in d W_{F_0}^0$. Then $dC$ contains $F$ in its closure and $\ell(d)=\ell(d_F)-\ell(u)$ of $C$. Therefore $u=1$ and $d_F=d\in \D_{F_0}$. 
Now let   $F'$  a facet such that $F'\subseteq \overline F$.
Both facets   $F'_0:=d_{F'}^{-1} F'$  and $d_F^{-1} d_{F'} F'_0$ are contained in $\overline C$.
By Remark  \ref{rema:4.4} we have
$d_F^{-1} d_{F'} F'_0= F'_0$ and $d_{F'}^{-1}d_F \in W^0_{F'_0}$. Point ii then follows from \eqref{additive0}.

\end{proof}
\begin{definition}\label{defi:types}
\noindent Given  $s\in \Saff$, a facet $F$  of $\Aa$ and $F_0:= d_F^{-1}F\subseteq \overline C$ we say that $F$ is 
\begin{itemize}
\item of type $\mathbf A$  for $s$ if $\ell(sd_F)=\ell(d_F)-1$ in which case  we recall that $sd_F\in \D_{F_0}$,
\item  of type $\mathbf {B.i}$  for $s$ if $\ell(s d_F)= \ell(d_F)+1$ and $sd_F\in \D_{F_0}$,
\item of type $\mathbf{B.ii}$  for $s$ if $\ell(s d_F)= \ell(d_F)+1$ and $sd_F\not\in \D_{F_0}$. Equivalently,  $F$ is of type $\Bii$  for $s$ when $sF=F$.
\end{itemize}
\end{definition}

\begin{lemma} Given  two facets $F',F$   such that $F'\subseteq \overline F$ and $s\in \Saff$,
\begin{itemize}
\item if $F$ is of type $\A$,   then $F'$ is of type $\A$  or $\Bii$ for $s$.
\item if $F$ is of type $\Bi$,  then $F'$ is of type $\Bi$ or $\Bii$ for $s$.
\item if $F$ is of type $\Bii$, then $F'$ is of type $\Bii$ for $s$.
\end{itemize}
\end{lemma}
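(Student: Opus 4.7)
The plan is to reduce all three cases to length-function computations built on the identity $\ell(d_F) = \ell(d_{F'}) + \ell(u)$ from Proposition~\ref{prop:C(F)}(ii), where $u := d_{F'}^{-1} d_F$. Inspecting the proof of that proposition shows moreover that $u \in W^0_{F'_0}$; combined with \eqref{additive0}, this means that for any $d \in \D_{F'_0}$ one has $\ell(du) = \ell(d) + \ell(u)$. This additivity is the engine that drives the remaining arguments.

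For $F$ of type $\Bii$, which by Definition~\ref{defi:types} is equivalent to $sF = F$, I would invoke the $\Bii$ branch of Proposition~\ref{representants}(ii) to get $sd_F \in d_F W^0_{F_0}$, hence $d_F^{-1} s d_F \in W^0_{F_0}$. Since every generator $s_A$ of $W^0_{F_0}$ (for $A \in S_{F_0}$) fixes $F_0$ pointwise, so does every element of the subgroup, and therefore $s$ fixes $F = d_F F_0$ pointwise. Thus $s$ fixes $\overline F \supseteq F'$ pointwise, which forces $sF' = F'$ and places $F'$ in type $\Bii$.

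For $F$ of type $\A$ (i.e.\ $\ell(sd_F) = \ell(d_F) - 1$), I would rule out $F'$ being of type $\Bi$ by contradiction. If $sd_{F'} \in \D_{F'_0}$ and $\ell(sd_{F'}) = \ell(d_{F'}) + 1$, then the additivity from the first paragraph, applied to $sd_{F'}$ and $u$, yields
\[\ell(sd_F) \;=\; \ell((sd_{F'})u) \;=\; \ell(sd_{F'}) + \ell(u) \;=\; \ell(d_F) + 1,\]
contradicting $F$ of type $\A$. The case of $F$ of type $\Bi$ is symmetric: assuming $F'$ of type $\A$, the general subadditivity $\ell(ab) \le \ell(a) + \ell(b)$ in a Coxeter group gives $\ell(sd_F) \le \ell(sd_{F'}) + \ell(u) = \ell(d_F) - 1$, contradicting $\ell(sd_F) = \ell(d_F) + 1$.

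The main subtlety is noticing that the proof of Proposition~\ref{prop:C(F)} actually establishes the refinement $u \in W^0_{F'_0}$, rather than only the length identity recorded in the statement. Once this is extracted, the $\Bii$ case reduces to the trivial action of $W^0_{F_0}$ on $F_0$, and the remaining two cases collapse to the one-line length estimates above.
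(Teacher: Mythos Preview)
Your proof is correct and takes essentially the same approach as the paper: both extract $u := d_{F'}^{-1} d_F \in W^0_{F'_0}$ from the proof of Proposition~\ref{prop:C(F)} and then use the additivity $\ell(du) = \ell(d) + \ell(u)$ for $d \in \D_{F'_0}$ (equation~\eqref{additive0}) to run contrapositive length computations ruling out $F'$ of type $\Bi$ (resp.\ $\A$) when $F$ is of type $\A$ (resp.\ $\Bi$), and handle the $\Bii$ case via $sF = F \Rightarrow sF' = F'$. The only cosmetic differences are that the paper dispatches $\Bii$ in one line (a reflection fixing a facet of the Coxeter complex fixes it pointwise), and for the $\Bi$ case uses the exact additivity (valid since $sd_{F'} \in \D_{F'_0}$ in type $\A$) rather than your subadditivity bound --- but either suffices.
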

\begin{proof} With the notation of the proof of Proposition \ref{prop:C(F)}, we have  $d_F=d_F' u$ where $u\in W^0_{F_0'}$ and $\ell(d_F)= \ell(d_{F'})+\ell(u)$.\\
If $F'$ is of type $\Bi$, then $\ell(s d_F)=\ell( s d_{F'} u)=\ell( s d_{F'})+\ell(u)=\ell( d_{F'})+1+\ell(u)=\ell( d_F)+1$ so $F$ is not of type $\A$. \\
If $F'$ is of type $\A$, then $\ell(s d_{F})=\ell( s d_{F'} u)=\ell( s d_{F'})+\ell( u)=\ell( d_{F'})-1+\ell(u)=\ell( d_F)-1$ so $F$  is not of type $\Bi$.
Suppose that $F$ is of type $\Bii$,  then $sF=F$ so $sF'=F'$ and $F$ is also of type $\Bii$.
\end{proof}

For $F$ a facet in $\overline C$, the set $\D_F$ is stable by right multiplication by $\Omega_F$. By  \cite[Lemma 4.12-i]{OS1},  a chosen system of representatives $\D_F^\dagger$ of the orbits of $\D_F$ under the right action of $\Omega_F$ is a system of representatives of the left cosets $W/ \WF$.

\subsection{Finite Hecke algebras\label{subsec:finiteH} and orientation characters}

For a facet $F$ in  $\overline C$ consider the free $R[\a,\b]$-submodule $\HF \subset \Hh$ with basis $\{T_w\}_{w \in \WF}$.  It is a subring of $\Hh$.
 \begin{proposition} \label{prop:frees} 
 $\Hh$ is free as an $\HF$-module on the right (resp. left) with basis $\{T_d\}_{d\in\D_F^\dagger}$ (resp. $\{T_{d^{-1}}\}_{d\in\D_F^\dagger}$).
 \end{proposition}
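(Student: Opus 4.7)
The strategy is to combine the PBW-type basis $\{T_w\}_{w\in W}$ of $\Hh$ over $R[\a,\b]$ with the length-additivity property of $\D_F$ established in Proposition \ref{representants}(i), so that the braid relation \eqref{braid} turns the decomposition $W=\bigsqcup_{d\in \D_F^\dagger} d\WF$ into a direct sum decomposition of $\Hh$ as a right $\HF$-module.

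For the right-module statement, the key step is to extend the length additivity from $\WFf$ to $\WF=\WFf\rtimes\Omega_F$. Given $d\in\D_F^\dagger\subset\D_F$ and $w\in\WF$, I would write $w=v\omega$ with $v\in\WFf$ and $\omega\in\Omega_F$. Since $\Omega$ is the group of length-zero elements, right multiplication by $\omega$ preserves length, so $\ell(dv\omega)=\ell(dv)$, and Proposition \ref{representants}(i) gives $\ell(dv)=\ell(d)+\ell(v)=\ell(d)+\ell(w)$. The braid relation \eqref{braid} then yields $T_dT_w=T_{dw}$, because both $T_dT_v=T_{dv}$ and $T_{dv}T_\omega=T_{dv\omega}$ hold under the corresponding length additivities. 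Since $\D_F^\dagger$ is a set of representatives of $W/\WF$ (by the reminder recalled from \cite[Lemma 4.12-i]{OS1}), the map $(d,w)\mapsto dw$ is a bijection $\D_F^\dagger\times\WF\xrightarrow{\sim} W$, and so
$$\Hh=\bigoplus_{w\in W}R[\a,\b]\,T_w=\bigoplus_{d\in\D_F^\dagger}\bigoplus_{w\in\WF} R[\a,\b]\,T_dT_w=\bigoplus_{d\in\D_F^\dagger}T_d\,\HF.$$

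For the left-module statement, I would deduce it from the right one by taking inverses. The set $\{d^{-1}\}_{d\in\D_F^\dagger}$ is then a system of representatives for $\WF\backslash W$, and for $w\in\WF$ and $d\in\D_F^\dagger$ I would compute $\ell(wd^{-1})=\ell(dw^{-1})=\ell(d)+\ell(w^{-1})=\ell(d)+\ell(w)$, the middle equality coming from the previous step applied to $w^{-1}\in\WF$. The braid relation gives $T_wT_{d^{-1}}=T_{wd^{-1}}$, and the bijection $\WF\times\D_F^\dagger\xrightarrow{\sim}W$, $(w,d)\mapsto wd^{-1}$, yields the desired left-module decomposition.

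I do not expect any serious obstacle: the entire proof is a bookkeeping exercise, and the only small delicate point is the passage from length additivity on $\WFf$ (which is what Proposition \ref{representants}(i) directly provides) to length additivity on the full group $\WF=\WFf\rtimes \Omega_F$. That step is handled by the trivial observation that $\Omega_F\subset\Omega$ consists of length-zero elements, so right (or left) multiplication by an element of $\Omega_F$ neither contributes to nor disturbs lengths.
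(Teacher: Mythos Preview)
Your proof is correct and follows exactly the approach the paper indicates: use the coset decomposition $W=\bigsqcup_{d\in\D_F^\dagger}d\WF$, combine the length additivity from Proposition~\ref{representants}(i) (extended from $\WFf$ to $\WF$ via the length-zero elements of $\Omega_F$) with the braid relation~\eqref{braid} to get $T_dT_w=T_{dw}$, and read off the freeness from the $R[\a,\b]$-basis $\{T_w\}_{w\in W}$. The paper's own proof is just the one-line pointer to these same ingredients (and to \cite[Prop.~4.21]{OS1}), so you have simply unpacked it.
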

 \begin{proof}
 This follows from Proposition \ref{representants} and \eqref{braid} (cf. \cite[Prop. 4.21]{OS1}).
\end{proof}

\begin{proposition}\phantomsection\label{prop:freesF}
We have 
\begin{enumerate}
 \item $\HF$ is finite and  free over $\z$.  
 \item $\HF$   is  a Noetherian ring.
 \item  As a right (resp. left) $\HF$-module, ${\Hh}$ is free of basis $\{T_d, \: d\in \D_{F}^\dagger\}$ (resp.  $\{T_{d^{-1}}, \: d\in \D_{F}^\dagger\}$).
 \end{enumerate}
\end{proposition}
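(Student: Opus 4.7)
Part (3) is already the content of Proposition \ref{prop:frees}, so it suffices to prove parts (1) and (2).

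For part (1), the plan is to exhibit an explicit finite $\z$-basis of $\HF$. The key point is that $Q^\perp \subseteq W_F$, indeed $Q^\perp \subseteq \Omega_F$. This holds because for any $\check x \in Q^\perp$ one has $\la \alpha,\check x\ra = 0$ for all $\alpha\in\Phi$, so $\check x$ fixes $\Phi_\aff$ pointwise, hence $\ell(\check x)=0$ and $\check x\in\Omega$. Furthermore, since the Coxeter complex $\Aa$ has dimension $d=\rk(Q)$, the group $Q^\perp$ acts by translations in the directions orthogonal to the span of $\Phi$, so it stabilizes each facet of $\overline C$; thus $Q^\perp\subseteq \Omega_F$. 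Next, $[W_F:Q^\perp]<\infty$: indeed $W_F = W_F^0 \rtimes \Omega_F$ with $W_F^0$ finite, while $\Omega_F\subseteq \Omega\cong \cX/Q^\vee$, and $Q^\perp$ together with $Q^\vee$ spans $\cX\otimes\mathbf{R}$, so $Q^\perp$ has finite index in $\cX/Q^\vee$.

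Once these two facts are in hand, I would fix a finite set of representatives $\{w_i\}$ for $W_F/Q^\perp$ and use the braid relations. Since every element of $Q^\perp$ has length zero, \eqref{braid} gives $T_{w_i}T_{\check x}=T_{w_i\check x}$ for all $\check x\in Q^\perp$ (and analogously on the left). Decomposing an arbitrary $w\in W_F$ uniquely as $w=w_i\check x$ with $\check x\in Q^\perp$ and translating the $R[\a,\b]$-basis $\{T_w\}_{w\in W_F}$ of $\HF$ accordingly shows that $\{T_{w_i}\}_i$ is a $\z$-basis of $\HF$. This gives both the freeness and finiteness claimed in (1).

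For part (2), the ring $\z = R[\a,\b][Q^\perp]$ is a Laurent polynomial algebra in finitely many variables (since $Q^\perp$ is free abelian of finite rank) over the Noetherian ring $R[\a,\b]$, hence is itself Noetherian. By part (1), $\HF$ is finitely generated as a module over its central subalgebra $\z$, so it is Noetherian as a $\z$-module, and therefore Noetherian as a ring (since any left or right ideal is in particular a $\z$-submodule).

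The only real obstacle is the geometric input $Q^\perp \subseteq \Omega_F$, which must be extracted from the conventions on $\Aa$ used in \S\ref{313}; the rest reduces to the braid relation \eqref{braid} applied to length-zero elements and standard Noetherianity.
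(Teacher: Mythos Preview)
Your proposal is correct and follows essentially the same approach as the paper: both argue that $Q^\perp\subseteq\Omega_F$, that $W_F/Q^\perp$ is finite, and then use the braid relations for length-zero elements to exhibit $\{T_w\}_{w\in[W_F/Q^\perp]}$ as a $\z$-basis, with Noetherianity of $\HF$ following from finiteness over the Noetherian ring $\z$. You simply supply more detail where the paper is terse; one small remark is that your geometric justification for $Q^\perp\subseteq\Omega_F$ can be replaced by the cleaner observation (already implicit in your first sentence) that $\check x\in Q^\perp$ fixes $\Phi_\aff$ pointwise, hence centralizes every $s_A$, so $\check x S_F\check x^{-1}=S_F$.
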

\begin{proof}

Recall  that $\WF$  is the semidirect product of the finite group $\WFf$ and of the subgroup $\Omega_{F}$ (containing ${Q^\perp}$) of $\Omega$. Hence  $\WF/{Q^\perp}$ is finite and $\HF$ is free over  $\z$ with basis  the set $\{T_w\}$ where $w$ ranges over a system of representatives $[\WF/{Q^\perp}]$. Point 2 follows immediately. Point 3 comes from the definition of $\D_{F}^\dagger$ (see the very end of \S\ref{313} and \eqref{additive0}) and from the braid relations \eqref{braid}. 
\end{proof}

For $w \in W_{F}$, set  $\epsilon_{F}(w) = +1$ (resp.\ $-1$) if $w$ preserves (resp. reverses) a given orientation of $F$. This defines a character $W_{F}\rightarrow\{\pm 1\}$  which is trivial on ${Q^\perp}$ (cf. \cite[3.1 and 3.3.1]{OS1}).
 \begin{lemma}
 The $\r[\a,\b] $-linear map 
\begin{equation}\label{f:defiep}
 j _{F}: \HF\longrightarrow \HF\, ,\quad\quad T_w\longmapsto \epsilon_{F}(w) T_w 
\end{equation} 
is an involutive automorphism of $\HF$ which acts trivially on $\z$.
\end{lemma}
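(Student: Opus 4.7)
The plan is to verify the three properties---algebra homomorphism, involutivity, and triviality on $\z$---directly on a convenient presentation of $\HF$. Since $\{T_w\}_{w\in \WF}$ is an $\r[\a,\b]$-basis of $\HF$, the formula $T_w\mapsto\epsilon_F(w)T_w$ certainly defines an $\r[\a,\b]$-linear endomorphism; the content is that it respects multiplication.

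Using the decomposition $\WF = \WFf\rtimes \Omega_F$ (with $\WFf$ the Coxeter group on $S_F$), I would present $\HF$ by the generators $\{T_s\}_{s\in S_F}\cup\{T_\omega\}_{\omega\in \Omega_F}$ subject to: (i) the Coxeter braid relations among the $T_s$'s; (ii) the quadratic relations $(T_s-\a)(T_s-\b)=0$; (iii) the group relations $T_\omega T_{\omega'}=T_{\omega\omega'}$ in $\Omega_F$; and (iv) the semidirect product relations $T_\omega T_s T_{\omega^{-1}} = T_{\omega s\omega^{-1}}$ (well-defined because $\Omega_F$ normalizes $S_F$). The key observation, which I would state first, is that every $s\in S_F$ fixes $F$ pointwise, hence fixes each vertex of $F$ and therefore preserves any orientation; so $\epsilon_F(s)=+1$, which means $j_F(T_s)=T_s$.

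With this in hand the verification is immediate: relations (i) and (ii) involve only $T_s$'s which are fixed by $j_F$, so they are preserved. For (iii), $\epsilon_F(\omega\omega')=\epsilon_F(\omega)\epsilon_F(\omega')$ since $\epsilon_F$ is a character. For (iv), applying $j_F$ to the left-hand side gives $\epsilon_F(\omega)\epsilon_F(\omega^{-1})T_\omega T_s T_{\omega^{-1}} = T_\omega T_s T_{\omega^{-1}}$, while the right-hand side goes to $\epsilon_F(\omega s\omega^{-1})T_{\omega s\omega^{-1}}$; but $\omega s\omega^{-1}\in S_F$, so $\epsilon_F(\omega s\omega^{-1})=+1$ and the two sides agree. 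Involutivity is automatic from $\epsilon_F(w)^2=1$, and $\r[\a,\b]$-linearity together with the already-recorded fact that $\epsilon_F$ is trivial on ${Q^\perp}$ gives $j_F|_\z=\mathrm{id}$.

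No real obstacle is anticipated: the substantive point is just the geometric observation that $s\in S_F$ acts trivially on the vertices of $F$ and therefore lies in $\ker\epsilon_F$, which is what reconciles the sign twist with the quadratic relation. Everything else is bookkeeping on the character $\epsilon_F$.
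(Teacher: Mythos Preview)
Your argument is correct and follows essentially the same route as the paper. Both proofs rest on the single geometric observation that elements of $W_F^0$ (in particular each $s\in S_F$) fix $F$ pointwise and hence lie in $\ker\epsilon_F$; the paper phrases this as ``$\epsilon_F$ factors through $\WF/W_F^0\cong\Omega_F$'' and then checks multiplicativity by induction on generators, while you package the same reduction as ``check the defining relations are preserved''---these are two standard formulations of the same verification.
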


\begin{proof} An element  $w\in W_{{F}}$ fixes ${F}$ pointwise so 
$\epsilon_{F}$ factors through
$\WF/ W_{F}^0\cong \Omega_{F}\rightarrow \{\pm 1\}$; namely, for $w= w_0\omega\in W_{F}^0\rtimes \Omega_{F}=\WF$, we have
 $j _{F}(T_w)=\epsilon_{F}(T_\omega)T_w $. We want to show that $j _{F}(T_v T_w)=j _{F}(T_v) j _{F} (T_w)
$ for all $v,w\in \WF$. By induction it is enough to do it when $v\in \Omega_F$ and $v\in S_F$ which is immediate using \eqref{braid} an \eqref{quad}.
\end{proof}

Given a left (resp. right) $\HF$-module $M$ we will denote by $(j_F)M$ (resp. $M(j_F)$) the left (resp. right) $\HF$-module
$M$ where the action of $\HF$ is twisted by $j_F$. 

\begin{remark}  \label{rema:JC}
Recall that $W_C=\Omega$. The character $\epsilon_C$ of $\Omega$ extends to a  character of $W$ as in \eqref{orichar}. It is easy to check that the  involution $j_C$ of $\HC$ extends to an involution  of $\Hh$ which we still denote by $j_C$ and which is given by
\begin{equation}\label{jC}j_C: \Hh\longrightarrow \Hh, \quad T_{\omega w}\mapsto \epsilon_C(\omega w) T_{\omega w}=\epsilon_C(\omega)  T_{\omega w} \text{  for  $\omega \in \Omega$ and $w\in \Waff$. }\end{equation}
Define   \begin{equation}\label{f:upiota}\upiota:= \iota\circ j_C\ \end{equation} 
where  $\iota$   was given in \eqref{f:invo}. 
It is an involution of $\Hh$ which acts trivially on $\z$. 
\end{remark}

\subsection{Coefficient systems}

Following  \cite[II.2]{SS} we define a (contravariant) coefficient system $\cV$  of right $\Hh$-modules on $\Aa$ as the following data: a right $\Hh$-module $\EuScript M_F$  for each facet $F$ of  $\Aa$ and  a right $\Hh$-equivariant transition map
$r^F_{F'}: \V_F\rightarrow \V_{F'}$ for each pair of facets $F$ and $F'$ such that $F'\subseteq \overline F$    satisfying
 \begin{center}$r_F^F= \id_{\V_F}$, and $r^F_{F''}=r^{F'}_{F''}\circ r^F_{F'}$ whenever  $F'\subseteq \overline F$ and 
 $F''\subseteq \overline F'$.\end{center}
For $i\geq 0$ we denote by $\Aa_i$ (resp. $\Aa_{(i)}$) the set of facets  (resp. oriented faces) of dimension  $i$ of $\Aa$.
To a  coefficient system $\cV$ as above, we attach the oriented chain complex  of right $\Hh$-modules
\begin{equation}\label{f:apart-complexV}
    0 \longrightarrow C_c^{or} (\mathscr{A}_{(d)}, \cV) \xrightarrow{\;\partial\;} \ldots \xrightarrow{\;\partial\;} C_c^{or} (\mathscr{A}_{(0)}, \cV)  \longrightarrow 0
\end{equation}
where, for $i\in\{0, \dots, d\}$, the space of  oriented $i$-chains
$C_c^{or} (\mathscr{A}_{(i)}, \cV)$ consists of  all finitely supported maps\\
$\gamma:  \mathscr{A}_{(i)}\rightarrow  \coprod _{F\in \Aa_i}  V_F$ such that
for any oriented facet $(F,c)$ of dimension $i$ we have
\begin{itemize}
\item $\gamma((F, c))\in \V_F$  and if $i\geq 1$,
\item $\gamma((F, -c))= -\gamma((F, c))$,

\end{itemize} and  when $i\geq 1$,  the differential 
$\partial  \::\:C_c^{or} (\mathscr{A}_{(i)}, \cV)\longrightarrow C_c^{or} (\mathscr{A}_{(i-1)}, \cV)$
is given by
\begin{align*}
\gamma\longmapsto [(F',c')\mapsto \sum_{(F,c)\in \mathscr{A}_{(i)}, F'\subseteq \overline F} r^F_{F'}(\gamma(F,c_{(F',c')}))]
\end{align*} where $(F,c_{(F',c')})$ is  the orientation on $F$ which induces the orientation   $(F',c')$  on $F'$.

\subsection{\label{sec:reso}A resolution for generic Hecke algebras}
 In Proposition \ref{prop:C(F)}, we introduced, for a facet $F$ of $\Aa$, the chamber 
$C(F)$  which   is closest to $C$ in terms of gallery distance and such that $F\subset \overline{C(F)}$ and the element $d_F\in \Waff$ such that $C(F)= d_F C$. For facets $F$ and $F'$ such that $F'\subseteq \overline F$, we consider the right $\Hh$-equivariant maps
 \begin{align}\label{f:defir}r^{F}_{F'}: \Hh&\longrightarrow \Hh\cr h& \longmapsto {T_{d_{F'}^{-1} d_F} }h\end{align}  

\begin{remark}
 If $C(F')= C(F)$ then  $r^F_{F'}$ is the identity map.
\label{rema:locc}
\end{remark}

The following technical lemma will be used in the proof of Proposition \ref{prop:isoBimo}. We refer to Definition \ref{defi:types}. For $w\in W$, we see  $T_w$ below as a the  right $\Hh$-equivariant map of left multiplication by $T_w$ on $\Hh$.

\begin{lemma}\label{lemma:tech}
 Let $F$ a facet of dimension $>0$ and $F'$ a facet in $\overline F$  of codimension $1$.
Let $s \in S_\aff$.

\begin{itemize}
\item[$(a)$] If $F$  and $F'$ are of same type for $s$, then $r^{sF}_{sF'}= r^{F}_ {F'}$.
\item[$(b)$] If $F$ is of type $\A$ and $F'$ of type $\Bii$ for $s$, then
$(\a+\b) r^{F}_{F'} -\a\b \, r^{sF}_ {F'}= T_{d_{F'}^{-1} s d_{F'}}  \circ r^F_{F'}$.
\item[$(c)$] If $F$ is of type $\Bi$ and $F'$ of type $\Bii$ for $s$, then
$ r^{sF}_{F'}= T_{d_{F'}^{-1} s d_{F'}}  \circ r^F_{F'}$.
\item[$(d)$] If $F$ is of type $\Bii$ then 
$ r^{F}_{F'}\circ T_{d_{F}^{-1} s d_F} = T_{d_{F'}^{-1} s d_{F'}} \circ  r^F_{F'}$.
\end{itemize}

\end{lemma}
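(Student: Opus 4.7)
My plan is to reduce each of the four identities to a short manipulation in $\Hh$, governed either by the braid relation \eqref{braid} or the quadratic relation \eqref{quad}, after exploiting the length-additive decompositions from Propositions \ref{representants} and \ref{prop:C(F)}. The organizing variable is $w := d_{F'}^{-1} d_F$, which lies in $W^0_{F'_0}$ by Proposition \ref{prop:C(F)}(ii) and satisfies $\ell(d_F) = \ell(d_{F'}) + \ell(w)$.

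The crucial preparatory step is that if $F'$ (resp.\ $F$) is of type $\Bii$ for $s$, then the conjugate $s' := d_{F'}^{-1} s d_{F'}$ (resp.\ $s'' := d_F^{-1} s d_F$) actually lies in $\Saff$. Indeed, $sF' = F'$ forces $s$ to fix $F'$ pointwise, so $s' \in W^0_{F'_0}$, and combining the length-additive identity $\ell(sd_{F'}) = \ell(d_{F'}) + \ell(s')$ (from $d_{F'}\in \D_{F'_0}$ via \eqref{additive0}) with the type-$\Bii$ length formula $\ell(sd_{F'}) = \ell(d_{F'})+1$ forces $\ell(s')=1$. I will also use that when $F$ is of type $\A$ or $\Bi$, $sd_F$ lies in $\D_{F_0}$, so $sC(F) = (sd_F)C$ is minimal with respect to gallery distance and $d_{sF} = sd_F$.

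Each case is then a short verification. For (a), either both facets are of type $\A$ or both of type $\Bi$, in which case $d_{sF'}^{-1}d_{sF} = (sd_{F'})^{-1}(sd_F) = w$ and both sides are left-multiplication by $T_w$; or both are of type $\Bii$, where $sF = F$ and $sF' = F'$ make the claim tautological. For (b), (c), (d), $F'$ is of type $\Bii$, so $s' \in \Saff$ and $s'w = d_{F'}^{-1} s d_F$; the decomposition $sd_F = d_{F'}\cdot (s'w)$ combined with length-additivity from $d_{F'}\in\D_{F'_0}$ gives $\ell(sd_F) = \ell(d_{F'}) + \ell(s'w)$, so $\ell(s'w) = \ell(w)-1$ in type $\A$ and $\ell(s'w) = \ell(w)+1$ in types $\Bi$ and $\Bii$. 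Case (c) then follows from \eqref{braid}: $T_{s'}T_w = T_{s'w} = T_{d_{F'}^{-1}sd_F}$, matching $r^{sF}_{F'}$. Case (d) additionally has $F$ of type $\Bii$ by the preceding lemma, so $s''\in\Saff$ and $ws'' = s'w$ has length $\ell(w)+1$; \eqref{braid} then yields $T_{s'}T_w = T_{s'w} = T_{ws''} = T_wT_{s''}$. For (b), we write $w = s'\cdot (s'w)$ as a reduced product and apply \eqref{quad} in the form $T_{s'}^2 = (\a+\b)T_{s'}-\a\b$ to obtain $T_{s'}T_w = T_{s'}^2 T_{s'w} = (\a+\b)T_w - \a\b T_{s'w}$, which is exactly the displayed identity after left-multiplying everything by the appropriate element.

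The only real obstacle is bookkeeping: one must consistently track which $\D$-coset each element belongs to so that the correct instance of \eqref{additive0} is invoked, and ensure that $s',s''$ are genuine elements of $\Saff$ (not just reflections in $W^0_{F_0}$) before applying the braid or quadratic relation. The type classification $\A, \Bi, \Bii$ is engineered precisely to encode the length data required at each step.
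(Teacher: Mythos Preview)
Your proof is correct and follows essentially the same route as the paper's. The only cosmetic difference is that the paper organizes the length computations via root positivity (introducing $A\in\Pi_\aff$ with $s=s_A$, $B:=d_{F'}^{-1}A$, and checking whether $w^{-1}B\in\Phi_\aff^\pm$), whereas you extract the same information from the additivity property \eqref{additive0} applied to $d_{F'}\in\D_{F'_0}$ and the decomposition $sd_F=d_{F'}\cdot(s'w)$; the underlying calculation is identical. One small slip of wording: in case~(d) you write ``additionally has $F$ of type $\Bii$ by the preceding lemma'' where you mean $F'$.
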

\begin{proof} 
We make a preliminary remark: for $F$ a facet in $\Aa$, $F_0:= d_F^{-1} F$ and
$s\in \Saff$,  we have $sd_F\in \D_{F_0}$ if and only if $sF\neq F$. More precisely, if  $s d_F\in \D_{F_0}$ then  $d_{sF}= s d_F$ and  in particular $s F\neq F$.
Otherwise,  by Proposition \ref{representants}-ii, we have $sd_F= d_F u$ with $u\in W^0_{F_0}$  and in particular $sF=F$.

\noindent We turn to the proof of the Lemma.\\
For  $(a)$, notice that if none of $F$ and $F'$ is of type $\Bii$ for $s$ then  $d_{sF}= s d_F$, $d_{sF'}= s d_{F'}$. If they are both of type $\Bii$ then $F=sF$ and $F'=sF'$.  \\For the other properties we introduce $A\in \Pi_\aff$ such that $s= s_A$ and we let $B:= d_{F'}^{-1} A$. 
Note that if $F'$ is of type $\Bii$ then $B\in \Pi_\aff$ and $s_B=d_{F'} ^{-1}s_A d_{F'}$.
\\For (b), assume that $F$ is of type $\A$  and  $F'$  of type $\Bii$ for $s$. We have
  $( d_{F'}^{-1} d_F)^{-1} B\in \Phi^{-}_\aff$ so
 $\ell(s_B d_{F'}^{-1} d_F)=\ell(d_{F'}^{-1} d_F)-1$ and
 
 $T_{s_B} T_{d_{F'}^{-1} d_F}= (\a+\b)T_{d_{F'}^{-1} d_F}-\a\b T_{ s_Bd_{F'}^{-1} d_F}= (\a+\b)T_{d_{F'}^{-1} d_F}-\a\b T_{d_{F'} ^{-1}s_A d_F}= (\a+\b)T_{d_{F'}^{-1} d_F}-\a\b T_{d_{F'} ^{-1}d_{s_AF}}$.\\
 For $(c)$, assume that $F$ is of type $\Bi$ and  $F'$ of type $\Bii$ for $s$. Then    $( d_{F'}^{-1} d_F)^{-1} B\in \Phi^{+}_\aff$ so
$\ell(s_B d_{F'}^{-1} d_F)=1+\ell(d_{F'}^{-1} d_F)$ and 
$T_{s_B} T_{d_{F'}^{-1} d_F}=T_{d_{F'}^{-1} s_A d_F}=T_{d_{F'}^{-1} d_{s_AF}}$.\\
Lastly if $F$ (hence $F'$) is of type $\Bii$ for $s$, let $C:= d_F^{-1} A\in \Pi_\aff$ and $s_C:=d_F^{-1} s_A d_F$.  We have
$d_{F'}^{-1} d_{F} C\in  \Phi^+_\aff$ so
 $ \ell(d_{F'}^{-1} d_{F})
+1=\ell(d_{F'}^{-1} d_{F} s_C)=\ell(d_{F'}^{-1} s_A d_F)$ and 
$T_{d_{F'}^{-1} s_A d_F}=T_{d_{F'}^{-1} d_{F}} T_{s_C}
$. On the other hand $(d_{F'}^{-1} d_{F})^{-1} B\in \Phi_\aff^+$ so
$1+ \ell(d_{F'}^{-1} d_F)=\ell(s_Bd_{F'}^{-1} d_F)=\ell(d_{F'}^{-1} s _A d_F)$
and $T_{s_B} T_{d_{F'}^{-1} d_F}= T_{d_{F'}^{-1} s_A  d_F}$.
Hence  $T_{s_B} T_{d_{F'}^{-1} d_F}=T_{d_{F'}^{-1} d_{F}} T_{s_C}$.
\end{proof}

Obviously $r_F^F$ is the identity map. 
Furthermore, it follows from 
Proposition \ref{prop:C(F)}-ii that  $\ell(d_{F''}^{-1} d_{F'})+
\ell(d_{F'}^{-1} d_{F})=\ell(d_{F''}^{-1} d_{F})\ $ for facets $F,F',F''$  such that $F''\subseteq \overline F'\subseteq \overline F$, and therefore, using \eqref{braid}, we have
$r^{F'}_{F''}\circ  r^F_{F'}=r^F_{F''}$.    
We   may therefore consider the coefficient system of right $\Hh$-modules $\CH$ given by the following data: to each facet $F$  we attach   right $\Hh$-module  $\Hh$ and,
for facets $F$ and $F'$ satisfying $F'\subseteq \overline F$, we choose the
 transition maps $r^{F}_{F'}$ as defined in \eqref{f:defir}. 
The coefficient system $\CH$
 yields a complex of right $\Hh$-modules as in \eqref{f:apart-complexV}.
We define a right $\Hh$-equivariant augmentation map by
\begin{align*}\upalpha:  C_c^{or} (\mathscr{A}_{(0)}, \CH)\cong \bigoplus_{x\in \Aa_{(0)}} \Hh&\longrightarrow  \Hh\cr (h_x)_x&\longmapsto \sum_x T_{d_x} h_x .\end{align*}

\begin{theorem} 
The augmented complex
\begin{equation}\label{f:complexC}
    0 \longrightarrow C_c^{or} (\mathscr{A}_{(d)}, \CH) \xrightarrow{\;\partial\;} \ldots \xrightarrow{\;\partial\;} C_c^{or} (\mathscr{A}_{(0)}, \CH)  \xrightarrow{\upalpha}  \Hh\longrightarrow 0 \ 
\end{equation} 
is an exact complex of right $\Hh$-modules.
\end{theorem}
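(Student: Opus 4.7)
The proof splits into three parts: verifying that \eqref{f:complexC} is a chain complex, surjectivity of $\upalpha$, and exactness in positive degrees.

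That the sequence is a complex is formal. The identity $\partial^2 = 0$ reduces, via standard sign considerations for oriented chains, to the cocycle property $r^F_{F''} = r^{F'}_{F''} \circ r^F_{F'}$ whenever $F'' \subset \overline{F'} \subset \overline F$. This cocycle property is a direct consequence of the additivity of lengths $\ell(d_{F''}^{-1} d_F) = \ell(d_{F''}^{-1} d_{F'}) + \ell(d_{F'}^{-1} d_F)$ from Proposition \ref{prop:C(F)}(ii), combined with the braid relation \eqref{braid}. For $\upalpha \circ \partial = 0$, I would compute directly on an oriented edge $(E,c)$ with endpoints $x_0, x_1$ (carrying orientation signs $\mp 1$):
$$\upalpha(\partial((E,c) \otimes h)) = T_{d_{x_1}} T_{d_{x_1}^{-1} d_E} h - T_{d_{x_0}} T_{d_{x_0}^{-1} d_E} h = T_{d_E} h - T_{d_E} h = 0,$$
again using the braid relation. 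Surjectivity of $\upalpha$ is then immediate: any vertex $x \in \overline C$ has $d_x = 1$, so the generator of $\V_x$ maps to $1 \in \Hh$, and right $\Hh$-linearity does the rest.

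The main content is exactness in positive degrees. The strategy, signaled by the introduction's reference to the ``contractibility of certain subcomplexes of $\Aa$'', is to exhibit \eqref{f:complexC} as an infinite direct sum of augmented oriented chain complexes of contractible subcomplexes of $\Aa$, each with trivial $\r[\a,\b]$-coefficients; the decomposition should be indexed by the $\{T_w\}_{w \in W}$-basis of $\Hh$. Concretely, $C_c^{or}(\mathscr{A}_{(i)}, \CH)$ is free over $\r[\a,\b]$ with basis $\{(F,c) \otimes T_v\}$, and the differential sends $(F,c) \otimes T_v$ to $\sum_{F'} (F', c|_{F'}) \otimes T_{d_{F'}^{-1} d_F} T_v$. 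The leading term is $T_{d_{F'}^{-1} d_F v}$ when lengths add, but lower-order terms can appear via the quadratic relation \eqref{quad}. The key input for organizing these into $w$-indexed pieces is Lemma \ref{lemma:tech}, which for each $s \in \Saff$ relates $r^F_{F'}$, $r^{sF}_{sF'}$, and $r^{sF}_{F'}$ according to the types $\mathbf A, \mathbf{B.i}, \mathbf{B.ii}$ (Definition \ref{defi:types}) of the facets with respect to $s$. The contractible subcomplexes that appear are unions of minimal galleries from $C$ to a chamber determined by $w$; contractibility comes from their being convex subcomplexes of the affine Coxeter complex.

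The main obstacle is to carry out this change of basis cleanly. The delicate case is type $\mathbf{B.ii}$ (case (d) of Lemma \ref{lemma:tech}), where $sF = F$ and the transition map intertwines left multiplication by $T_s$ with its conjugate $T_{d_F^{-1} s d_F}$ on the other side. This is precisely the mechanism that will force the orientation-twist $j_F$ to appear in the bimodule presentation of Proposition \ref{prop:isoBimo}; at the level of the present statement, it is what makes the change-of-basis decomposition compatible with the differential. Matching up the signs dictated by the orientation conventions of the oriented chain complex with the signs produced by $j_F$ on each face is where the main technical work of the proof is concentrated.
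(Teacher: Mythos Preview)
Your verification that \eqref{f:complexC} is a complex, that $\upalpha\circ\partial=0$, and that $\upalpha$ is surjective is correct and matches the paper. The argument for exactness in positive degrees, however, has a genuine gap.

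The direct-sum decomposition you propose, indexed by the basis $\{T_w\}_{w\in W}$, does not exist. The differential sends $(F,c_F)\otimes T_v$ to a sum of terms $(F',c')\otimes T_{d_{F'}^{-1}d_F}T_v$, and whenever $\ell(d_{F'}^{-1}d_F\,v)<\ell(d_{F'}^{-1}d_F)+\ell(v)$ the quadratic relation \eqref{quad} scatters this over several basis vectors; no labelling by a single $w$ is preserved, and a change of basis cannot repair this since $T_{d_F}$ is not invertible in $\Hh$. Your appeal to Lemma \ref{lemma:tech} is misplaced: that lemma compares $r^F_{F'}$ with its translates $r^{sF}_{sF'}$, $r^{sF}_{F'}$ under \emph{left} multiplication by $T_s$, and its role in the paper is to equip the chain complex with a left $\Hh$-action (Proposition \ref{prop:isoBimo}). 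It says nothing about splitting the complex along the $T_w$-basis on the right, and the orientation characters $j_F$ play no role in the exactness statement at hand.

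The paper's argument is different and avoids any basis decomposition. The two ingredients it isolates (following \cite[Thm.~3.4]{OS1}) are: (1) the local constancy $r^{C(F)}_F=\id$ of Remark \ref{rema:locc}, and (2) the contractibility of $\Aa(n-1)$ and of $\Aa(n-1)\cup\overline{D}$ for every chamber $D$ at gallery distance $n$ from $C$, where $\Aa(n)$ is the subcomplex of facets lying in the closure of some chamber at distance $\le n$. One filters the chain complex by support in $\Aa(n)$; on each graded piece the only surviving transitions have $\ell(d_{F'})=\ell(d_F)$, which forces $d_{F'}=d_F$ and hence $r^F_{F'}=\id$ by (1). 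Thus each graded piece is an oriented chain complex with \emph{constant} coefficients $\Hh$ on a relative pair, and (2) makes it acyclic. The quadratic relation never enters.
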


\begin{proof} 
We first verify $\upalpha\circ \partial=0$. Given a $1$-dimensional facet $F$ with vertices $x$ and $y$ we consider the $1$-chain with support $F$ and value $h\in \Hh$ at $(F, c_F)$. Its image by $\partial$ is, up to a sign, the $0$-chain supported by $\{x,y\}$
with values
$$x \mapsto T_{d_x ^{-1} d_F} h \text{ and } y\mapsto T_{d_y ^{-1} d_F} h \ .$$
The image of this $0$-chain by the augmentation map is
$ T_{d_x} T_{d_x ^{-1} d_F} h- T_{d_y}T_{d_y ^{-1} d_F} h$ (up to a sign)
which, by \eqref{braid} and Proposition \ref{prop:C(F)}-ii, is zero.

The proof of the exactness goes through exactly as in \cite[Theorem  3.4]{OS1}. The ingredients are the following.\\
1) Given a facet $F$ in $\Aa$, the transition map $r^{C(F)}_F$ is the identity map (Remark \ref{rema:locc}).\\
2) For $n\geq 1$ and $D$ a chamber at (gallery) distance $n$ of $C$, define 
$\Aa(n-1)$ to be the set of facets contained in the closure of the chambers at distance $\leq  n-1$ of $C$.
The simplicial subcomplexes $\Aa(n-1)$ and $\Aa(n-1)\cup\overline D$ are contractible. This is proved in \cite[Proposition 4.16]{OS1}.
\end{proof}

For $i \ge 0$ fix a (finite) subset of facets $\mathscr{F}_i \subset \Aa_i \cap \overline{C}$ representing the $W$-orbits of $\Aa_i$.

\begin{proposition}\label{prop:isoBimo}
Each $C_c^{or} (\mathscr{A}_{(i)}, \CH)$ carries the structure of an $\Hh$-bimodule such that:
 \begin{enumerate}
 \item this extends its natural structure as a right $\Hh$-module,
 \item $C_c^{or} (\mathscr{A}_{(i)}, \CH)\cong \bigoplus_{F\in \mathscr F_i}\Hh (j_F)\otimes_{\HF} \Hh$,
 \item the maps $\partial$ and $\upalpha$ in \eqref{f:complexC} are $\Hh$-biequivariant.
 \end{enumerate}
 \end{proposition}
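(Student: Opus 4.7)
The plan is to construct, for each $F \in \mathscr{F}_i$, a right-$\Hh$-linear isomorphism
\[\Phi_F \colon \Hh(j_F) \otimes_{\HF} \Hh \xrightarrow{\sim} C^F,\]
where $C^F \subset C_c^{or}(\mathscr{A}_{(i)}, \CH)$ denotes the submodule of chains supported on the $W$-orbit of $F$. Fix an orientation $c_F$ on $F$ (trivial if $i=0$) and for each $d \in \D_F^\dagger$ let $e_d^F$ denote the chain supported on $\{dF\}$ with value $1$ at $(dF, dc_F)$. Since $\D_F^\dagger$ represents $W/\WF$ (end of \S\ref{313}) and the antisymmetry $\gamma(F', -c') = -\gamma(F', c')$ is precisely recorded by $\epsilon_F$ (one has $(duF, duc_F) = (dF, \epsilon_F(u)\, dc_F)$ for $u \in W_F$), the family $\{e_d^F\}_{d \in \D_F^\dagger}$ is a right-$\Hh$-basis of $C^F$. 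By Proposition \ref{prop:frees}, $\Hh(j_F) \otimes_{\HF} \Hh$ is likewise a free right $\Hh$-module of the same rank with basis $\{T_d \otimes 1\}_{d \in \D_F^\dagger}$. Setting $\Phi_F(T_d \otimes 1) := e_d^F$ and extending right-$\Hh$-linearly gives the desired iso. Summing over $F \in \mathscr{F}_i$ yields $\Phi$, and I \emph{define} the left $\Hh$-action on $C_c^{or}(\mathscr{A}_{(i)}, \CH)$ by transport through $\Phi$; this makes (1) and the bimodule decomposition (2) automatic.

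For the biequivariance of $\upalpha$: every $F \in \mathscr{F}_0$ is a vertex, so orientations are trivial, $\epsilon_F \equiv 1$ and $j_F = \id$; moreover $d_{dF} = d$ for $d \in \D_F^\dagger$ by Proposition \ref{prop:C(F)}. Hence $\upalpha$ pulls back via $\Phi$ to the multiplication map $\bigoplus_{F \in \mathscr{F}_0} \Hh \otimes_{\HF} \Hh \to \Hh$, which is manifestly $\Hh$-biequivariant. For $\partial$, right-equivariance is built into $\CH$; left-equivariance reduces to checking generators $T_\omega$ ($\omega \in \Omega$, which acts on facets and on $c_F$ by a permutation with sign $\epsilon_C(\omega)$ and causes no real difficulty) and $T_s$ ($s \in \Saff$), which is the substantive case.

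For the $T_s$ case, I compare $\partial(T_s \cdot e_d^F)$ with $T_s \cdot \partial(e_d^F)$. The left side is computed by expanding $T_s T_d$ in the right-$\HF$-basis $\{T_{d'}\}_{d' \in \D_F^\dagger}$ of $\Hh$ (with the $j_F$-twist): by Proposition \ref{representants} and Definition \ref{defi:types}, $T_s T_d$ equals $(\a+\b) T_d - \a\b T_{sd}$ or $T_{sd}$ according as $dF$ is of type $\A$, $\Bi$, or $\Bii$ for $s$, and $T_{sd}$ is rewritten as $T_{d^\dagger} T_u$ using $sd = d^\dagger u$ with $d^\dagger \in \D_F^\dagger$ and $u \in W_F$; in the tensor product this becomes $\epsilon_F(u)\, T_{d^\dagger} \otimes T_u$. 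On the right side, $\partial(e_d^F)$ is a sum over codimension-one subfacets $F'' \subseteq \overline{dF}$, each contributing $\pm r^{dF}_{F''}(1) = \pm T_{d_{F''}^{-1} d}$, which is then re-expressed in the appropriate $\mathscr{F}_{i-1}$-orbit basis by analogous decompositions. The compatibility lemma right after Definition \ref{defi:types} restricts the pairs (type of $dF$ for $s$, type of $F''$ for $s$) to exactly the combinations handled in (a)--(d) of Lemma \ref{lemma:tech}, and each identity there gives precisely the match needed between the two sides: for instance, (b) reconciles the $(\a+\b) r^{dF}_{F''} - \a\b\, r^{sdF}_{F''}$ combination coming from type $\A$ on the left with the $T_{d_{F''}^{-1} s d_{F''}} \in H_{F^{*}}$ prefactor that arises on the right after pulling back via $\Phi_{F^*}$. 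The main obstacle is exactly this combinatorial bookkeeping: tracking orientations, keeping straight the signs $\epsilon_F(u)$ and $\epsilon_{F^*}(u')$ (for $F^* \in \mathscr{F}_{i-1}$ the orbit representative of $F''$), and absorbing the $W_F$-corrections $T_u$ through the $j_F$-twist. No individual identity is deep; the work lies in the case-by-case alignment.
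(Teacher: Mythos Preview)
Your overall strategy---construct the right-$\Hh$-isomorphism $\Phi$, transport the bimodule structure, then verify that $\upalpha$ and $\partial$ are left-equivariant using Lemma~\ref{lemma:tech}---is exactly the paper's. But your construction of $\Phi_F$ contains a genuine error that breaks the argument for $\upalpha$ (and the same error recurs in your $\partial$ computation).

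The claim ``$d_{dF}=d$ for $d\in\D_F^\dagger$'' is false. By definition $d_{dF}\in\Waff$, whereas $d\in\D_F^\dagger\subset W$ need not lie in $\Waff$. Writing $d=d_0\omega$ with $d_0\in\Waff$ and $\omega\in\Omega$, one has $dF=d_0(\omega F)$ with $\omega F\subseteq\overline C$, and since $d\in\D_F$ forces $d_0\in\D_{\omega F}$, Proposition~\ref{prop:C(F)} gives $d_{dF}=d_0$, not $d$. Consequently $\upalpha(e_d^F)=T_{d_{dF}}=T_{d_0}$, while the multiplication map sends $T_d\otimes 1$ to $T_d=T_{d_0}T_\omega$. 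These disagree whenever $\omega\ne 1$, so $\upalpha$ does \emph{not} pull back via your $\Phi$ to the multiplication map. (One can check directly that the map $T_d\otimes 1\mapsto T_{d_0}$ is not even left $\HC$-linear: $T_{\omega'}\cdot(T_d\otimes 1)\mapsto T_{\omega'd_0\omega'^{-1}}$, not $T_{\omega'}T_{d_0}$.) The same mis-identification appears when you write $r^{dF}_{F''}(1)=T_{d_{F''}^{-1}d}$; the definition \eqref{f:defir} gives $T_{d_{F''}^{-1}d_{dF}}$.

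The fix is precisely what the paper does: set $\Phi_F(T_d\otimes 1)$ equal to the chain supported on $dF$ with value $T_\omega$ (not $1$), together with the appropriate orientation sign $\delta^{(dF,c_{dF})}_{d(F,c_F)}$. With this correction the rest of your outline goes through, and the case analysis for $T_s$ via Lemma~\ref{lemma:tech} is exactly the bookkeeping the paper carries out. Your treatment of the $T_\omega$-case (``no real difficulty'') is also too casual: once the $T_\omega$ factor is in place, the paper still needs a nontrivial sign verification comparing $\delta^{\omega(G'_0,c_{G'_0})}_{(\omega F_0',c_{\omega F_0'})}\delta^{(F,c_F)|_{F'}}_{(F',c_{F'})}$ with $\delta^{(\omega F,c_{\omega F})|_{\omega F'}}_{(\omega F',c_{\omega F'})}\delta^{\omega(F_0,c_{F_0})}_{(\omega F_0,c_{\omega F_0})}$, and this is where the $j_F$-twist is actually used.
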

\begin{proof}

Fix an orientation $(F_0, c_{F_0})$ for each facet in $\overline C$. For any facet $F$ of $\Aa$ we let $F_0:= d_F^{-1} F$ and we choose
 $(F, c_F):= d_F(F_0, c_{F_0}).$ 
 Given $w\in \Waff$, it is then easy to check that 
 \begin{equation} \label{f:invO} w(F, c_F)= (w F, c_{wF})\ .\end{equation}
 Given two orientations $(F, c)$ and $(F, c')$ of $F$ we let 
 $\delta^{(F,c)}_{(F,c')}:= 1$ if $(F,c)=(F, c')$ and  $\delta^{(F,c)}_{(F,c')}:= -1$ otherwise. 

\medskip

\noindent Given  a facet $F$ and $h\in \Hh$ we denote by $(F,c_F)\mapsto h$ the oriented chain with support $F$ and value $h$ at $(F, c_F)$.

 \medskip
 
We define an isomorphism of right $\Hh$-modules
\begin{align} \label{f:iso}C_c^{or} (\mathscr{A}_{(i)}, \CH)&\longrightarrow    \bigoplus_{G_0\in \mathscr F_i}\Hh(j_{G_0})\otimes_{\HGO} \Hh
\end{align} 
 as follows. Let  $F$ be a facet of dimension $i$. 
 For $h\in \Hh$, we consider the oriented $i$-chain 
 $\gamma\::\:(F,c_F)\mapsto h \ .$
 The facet $F_0:= d_F^{-1} F$ in $\overline C$ is $\Omega$-conjugate to a unique 
 $G_0\in \mathscr F_i$. We choose $\omega \in \Omega$ such that
 $F_0:= \omega G_0$.
We attach to  $\gamma$  the element
 $$\delta^{(F_0, c_{F_0})}_{\omega(G_0, c_{G_0})}\:\:T_{d_F} T_\omega \otimes T_{\omega^{-1}} h\in\Hh(j_{G_0})\otimes_{\HGO} \Hh \ .$$
 It is easy to see that this does not depend on the choice of $\omega$ because if  $\omega'\in \Omega$ is such that $\omega' G_0= F_0$, then 
 $u:=\omega^{-1}\omega'$ lies in $\Omega _{G_0}$ and by definition $\epsilon_{G_0}(u)=\delta^{u(G_0, c_{G_0})}_{(G_0, c_{G_0})}=\delta^{\omega'(G_0, c_{G_0})}_{\omega(G_0, c_{G_0})}$.

We define a map in the other direction. Given $G_0\in \Ff_i$,  we have $\Hh(j_{G_0})\otimes_{\HGO} \Hh\cong \oplus_{d\in \D_{G_0}^\dagger} T_d\otimes H$  (Proposition \ref{prop:frees}). Let $d\in \D_{G_0}^\dagger$, $h\in H$ and $F:= d G_0$. Then $\omega:= d_F^{-1} d$ lies in $\Omega$ (because $d^{-1} F$ and $d_F^{-1} F$ both are facets in $\overline C$). We attach to the element $T_d\otimes h\in \Hh(j_{G_0})\otimes_{\HGO} \Hh$ the oriented $i$ chain  $(F, c_F)\mapsto \delta^{(F, c_F)}_{d(G_0, c_{G_0})} T_\omega h$.
It is easy to check that this defines an inverse for \eqref{f:iso}.

Let $i\geq 0$. We endow $C_c^{or} (\mathscr{A}_{(i)}, \CH)$ with the structure of  $\Hh$-bimodule inherited from $\bigoplus_{F\in \mathscr F_i}\Hh(j_F)\otimes_{\HF} \Hh$, thus  extending its natural structure of right $\Hh$-module.
We describe the left action of $\Hh$ on  $C_c^{or} (\mathscr{A}_{(i)}, \CH)$.\\
Let  $F$ be a facet  and $F_0:= d_F^{-1} F$. For  $h\in \Hh$,
we consider the oriented $i$-chain 
 $\gamma\::\:(F,c_F)\mapsto h \ .$

\begin{itemize}
\item[\textbf{I.}] Let $\omega\in \Omega$. The action of $T_\omega$ maps $\gamma$ onto
the oriented $i$-chain 
 $T_\omega\cdot \gamma\::\:(\omega F,c_{\omega F})\mapsto\delta^{\omega(F_0, c_{F_0})} 
 _{(\omega F_0, c_{\omega F_0})}
 T_\omega  h \ .$
\item[\textbf{II.}] Let $s\in \Saff$.   We refer to Definition \ref{defi:types}.

\begin{itemize}
\item Suppose $F$ is of type $\A$ for $s$. The action of $T_s$  maps $\gamma$ onto the sum of oriented chains
\begin{align*} (F,c_F)\mapsto (\a+\b) h\quad \quad+\quad \quad
(sF, c_{sF})&\mapsto  -\a\b \,h \ .\end{align*}
\item Suppose $F$ is of type $\Bi$ for $s$. 
The action of $T_s$  maps $\gamma$ onto the  oriented chain
$
(sF, c_{sF})\mapsto  h.$\item Suppose $F$ is of type $\Bii$ for $s$. 
The action of $T_s$  maps $\gamma$ onto the oriented chain
$(F, c_F)\mapsto T_{d_F^{-1} s d_F} h$.
\end{itemize}
\end{itemize}

Before showing that $\upalpha$ and $\partial$ are  left $H$-equivariant, we recall the remark at the beginning  
of the proof of Lemma \ref{lemma:tech} and  add the following one: for  a facet $F$ and $\omega\in \Omega$, we have  
$d_{\omega F}= \omega d_F \omega^{-1}$.

\noindent{We show that $\upalpha$ is left $H$-equivariant.}
Let $F=x$ be a vertex and recall that $\upalpha(\gamma)= T_{d_x} h$. 
\begin{enumerate}
\item We have $d_{\omega x}= \omega d_x \omega ^{-1}$. Therefore $\upalpha(T_\omega\cdot \gamma)=  T_{ \omega d_x \omega ^{-1}} T_\omega  h=T_{ \omega}  T_{d_x}   h= T_\omega\upalpha(\gamma)$.

\item Let $s\in \Saff$. We refer to Definition \ref{defi:types}.
\begin{enumerate}
\item Suppose that $x$ is of type $\A$ for $s$. Then $sd_x= d_{sx}$ and
 \begin{align*}\upalpha(T_s\cdot \gamma)&=T_{d_x} (\a+\b) h-\a\b T_{s d_x} h= T_sT_{sd_x} (\a+\b) h-\a\b T_{s d_x} h=T_{s} \upalpha(\gamma)\ .\end{align*}
\item Suppose $\ell(sd_x)= \ell(d_x)+1$. 
\begin{enumerate}
\item if  $x$ is of type $\Bi$  then $d_{sx}= s d_x$ so
$  \upalpha(T_s\cdot \gamma)= T_{sd_{x}} h= T_s T_{d_x} h= T_s \upalpha(\gamma)\ .$
\item if  $x$ is of type $\Bii$  then $\upalpha(T_s\cdot \gamma)=  T_{d_x} T_{d_x^{-1} s d_x} h= T_{s d_x} h= T_s T_{d_x} h= T_s \upalpha(\gamma)\ . $
\end{enumerate}
\end{enumerate}
\end{enumerate}

\noindent {We show that $\partial$ is left $H$-equivariant.} Suppose $i\geq 1$. By definition, $\partial (\gamma)$ is the sum over the facets  $F'$ of dimension $i-1$ contained in $\overline F$ of the chains  $(F', c_{F'})\mapsto \delta^{(F, c_F)\vert_{F'}}_{(F', c_{F'})} T_{d_{F'}^{-1}d_F} h$ where  we recall that $(F, c_F)\vert_{F'}$ is  the facet $F'$ equipped with the orientation induced by $(F, c_F)$.
We let $G'_0:= d_{F'}^{-1} F'$. 
\begin{enumerate}
\item
The action of $T_\omega$ on $\partial(\gamma)$ gives the sum over the same $F'$s of 
$$(\omega F', c_{\omega F'})\mapsto\delta^{\omega(G'_0, c_{G'_0})}_{(\omega F_0', c_{\omega F_0'})} \delta^{(F, c_F)\vert_{F'}}_{(F', c_{F'})} T_\omega T_{d_{F'}^{-1}d_F}   h$$ 
while $\partial( T_\omega \cdot \gamma)$ is the sum over the $F'$s of 
$$(\omega F', c_{\omega F'})\mapsto\delta^{(\omega F, c_{\omega F})\vert_{\omega F'}}_{(\omega F', c_{\omega F'})}\delta^{\omega(F_0, c{F_0})}_{(\omega F_0, c_{\omega F_0})}  T_{ d_{\omega F'}^{-1} d_{\omega F}} T_{\omega } h= \delta^{(\omega F, c_{\omega F})\vert_{\omega F'}}_{(\omega F', c_{\omega F'})}\delta^{\omega(F_0, c{F_0})}_{(\omega F_0, c_{\omega F_0})}  T_{\omega }T_{ d_{F'}^{-1} d_{ F}}  h \ .
$$ 
We check that the two displayed formulas above are equal. Note that $u:=d_{F'}^{-1} d_F\in W_{F_0'}$.
So
$$\delta^{\omega(G'_0, c_{G'_0})}_{(\omega F_0', c_{\omega F_0'})} \delta^{(F, c_F)\vert_{F'}}_{(F', c_{F'})}= \delta^{\omega(G'_0, c_{G'_0})}_{(\omega F_0', c_{\omega F_0'})} \delta^{d_F(F_0, c_{F_0})\vert_{F'}}_{d_{F'}(F_0', c_{F_0'})}=
\delta^{\omega(G'_0, c_{G'_0})}_{(\omega F_0', c_{\omega F_0'})} \delta^{u(F_0, c_{F_0})\vert_{F_0'}}_{(F_0', c_{F_0'})}=\delta^{u(F_0, c_{F_0})\vert_{F_0'}}_{\omega^{-1}(\omega F_0', c_{\omega F_0'})}= \delta^{\omega u(F_0, c_{F_0})\vert_{\omega F_0'}}_{(\omega F_0', c_{\omega F_0'})}$$
while, if we let $(F_0, \kappa_{F_0}):= \omega^{-1}( \omega F_0, c_{\omega F_0})$,  
$$\delta^{(\omega F, c_{\omega F})\vert_{\omega F'}}_{(\omega F', c_{\omega F'})}\delta^{\omega(F_0, c_{F_0})}_{(\omega F_0, c_{\omega F_0})} = \delta^{\omega u \omega^{-1}(\omega F_0, c_{\omega F_0})\vert_{\omega F_0'}}_{(\omega F_0', c_{\omega F_0'})}\delta^{\omega(F_0, c_{F_0})}_{(\omega F_0, c_{\omega F_0})}=\delta^{\omega u (F_0, \kappa_{ F_0})\vert_{\omega F_0'}}_{(\omega F_0', c_{\omega F_0'})}\delta^{(F_0, c_{F_0})}_{(F_0, \kappa_{F_0})}=\delta^{\omega u (F_0, c_{ F_0})\vert_{\omega F_0'}}_{(\omega F_0', c_{\omega F_0'})}\ .$$

\item 
 Now we consider the action of $T_s$ for $s\in \Saff$. 

$\bullet$
Suppose that $F$ is of type  \textbf{A}  for $s$. Recall (using \eqref{f:invO}) that $\partial (T_s\cdot \gamma)$ is the sum over the facets $F'$ of codimension $1$ in $\overline F$ of the chains
$
(F', c_{F'})\mapsto \delta^{(F, c_F)\vert_{F'}}_{(F', c_{F'})} (\a+\b) r^F_{F'}(h)$\ and 
$(sF', c_{sF'})\mapsto -\delta^{(F, c_{F})\vert_{F'}}_{(F', c_{F'})} \a\b \, r^{sF}_{sF'}( h). $

To see that it is equal to $T_s\cdot \partial (\gamma)$, we 
use Lemma \ref{lemma:tech}(a),(b) and we write it as the sum  of 
\begin{align*}
(F', c_{F'}) \mapsto \delta^{(F, c_F)\vert_{F'}}_{(F', c_{F'})} (\a+\b)r^{F}_{F'}( h)\text{ and }
(sF', c_{sF'}) \mapsto -\delta^{(F, c_{F})\vert_{F'}}_{(F', c_{F'})} \a\b r^{F}_{F'}( h)
\end{align*}where $F'$ ranges over the facets $F'$ of codimension $1$ in  $\overline F$ which are  of type $\textbf{A}$  for $s$
and of 
\begin{align*}
(F', c_{F'})&\mapsto \delta^{(F, c_F)\vert_{F'}}_{(F', c_{F'})} [(\a+\b) r^{F}_{F'}( h)-
\a\b r^{sF}_{F'}( h) ]=\delta^{(F, c_F)\vert_{F'}}_{(F', c_{F'})} T_{d_{F'}^{-1} s d_{F'}} r^F_{F'}(h)
\end{align*}
where $F'$ ranges over the facets $F'$ of codimension $1$ in $\overline F$ which are  of type $\Bii$. 

$\bullet$ Suppose that $F$  of type $\Bi$ for $s$.
Here $\partial (T_s\cdot \gamma)$ is the sum over the facets $F'$ of codimension $1$ in $\overline F$ of the chains 
$(sF', c_{sF'})\mapsto \delta^{(F, c_F)\vert_{F'}}_{(F', c_{F'})} r^{sF}_{sF'}( h)  \ .$
 To see that it is equal to $T_s\cdot \partial (\gamma)$, we 
use Lemma \ref{lemma:tech}(a),(c) and  write it as  the sum of the chains
\begin{align*}
(sF', c_{sF'})&\mapsto \delta^{(F, c_F)\vert_{F'}}_{(F', c_{F'})} r^{F}_{F'}( h) \ .
\end{align*}
where $F'$ ranges over the facets $F'$ of codimension $1$ in $\overline F$ which are  of type $\Bi$ 
and of 
\begin{align*}
(F', c_{F'})&\mapsto \delta^{(F, c_F)\vert_{F'}}_{(F', c_{F'})} T_{d_{F'}^{-1} s d_{F'}} r^F_{F'}(h) \ .
\end{align*}
where $F'$ ranges over the facets $F'$ of codimension $1$ in $\overline F$ which are  of type $\Bii$.

$\bullet$
Lastly suppose that $F$ is of type $\Bii$.
Here $\partial (T_s\cdot \gamma)$ is the sum over the facets $F'$ of codimension $1$ in $\overline F$ of the chains (using \eqref{f:invO})
$(F', c_{F'})\mapsto \delta^{(F, c_F)\vert_{F'}}_{(F', c_{F'})} r^F_{F'}(T_{d_{F}^{-1} s d_{F}} h) \ .$
To see that it is equal to $T_s\cdot \partial (\gamma)$, we 
use  Lemma \ref{lemma:tech}(d) and  write it as  the sum over the facets $F'$ of codimension $1$ in $\overline F$ of the chains 
$(F', c_{F'})\mapsto \delta^{(F, c_F)\vert_{F'}}_{(F', c_{F'})} 
T_{d_{F'}^{-1} s d_{F'}}   r^F_{F'}(h) \  .$
\end{enumerate}\end{proof}  
  
\begin{corollary} \label{cor:main1}
We have an exact resolution of $\Hh$ by $\Hh$-bimodules
\begin{equation}\label{f:complexH}
0 \longrightarrow \bigoplus_{F\in \mathscr F_d}\Hh(j_F)\otimes_{\HF} \Hh \longrightarrow \ldots \longrightarrow\bigoplus_{F\in \mathscr F_0}\Hh(j_F)\otimes_{\HF} \Hh\longrightarrow  \Hh\longrightarrow 0.
\end{equation}  
Moreover, each term in this resolution is free as a left (resp. right) $\Hh$-module.
\end{corollary}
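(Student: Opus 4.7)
The plan is to assemble Corollary \ref{cor:main1} from the two preceding results: the theorem establishing the exactness of the complex \eqref{f:complexC} of right $\Hh$-modules, and Proposition \ref{prop:isoBimo} which upgrades that complex to one of $\Hh$-bimodules and identifies its terms.

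First, by Proposition \ref{prop:isoBimo}(2), the term $C_c^{or}(\mathscr{A}_{(i)},\CH)$ of \eqref{f:complexC} is isomorphic to $\bigoplus_{F\in\mathscr F_i}\Hh(j_F)\otimes_{\HF}\Hh$; by part (1) of the same proposition the bimodule structure restricts to the natural right $\Hh$-module structure, and by part (3) both the differentials $\partial$ and the augmentation $\upalpha$ are $\Hh$-biequivariant under these identifications. So \eqref{f:complexH} is a complex of $\Hh$-bimodules. Since exactness is a statement about underlying abelian groups, exactness as a complex of right $\Hh$-modules (already established in the theorem preceding Proposition \ref{prop:isoBimo}) is the same as exactness as a complex of $\Hh$-bimodules.

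For the freeness statement, fix $F\in\mathscr F_i$ and consider $\Hh(j_F)\otimes_{\HF}\Hh$. The twist by $j_F$ affects only the left $\HF$-action on the first factor and not the right $\HF$-action. By Proposition \ref{prop:frees} (or equivalently Proposition \ref{prop:freesF}(3)), the first factor is free as a right $\HF$-module with basis $\{T_d\}_{d\in\D_F^\dagger}$, whence
\[
\Hh(j_F)\otimes_{\HF}\Hh\;\cong\;\bigoplus_{d\in\D_F^\dagger} T_d\otimes \Hh
\]
as right $\Hh$-modules, which is free. Symmetrically, Proposition \ref{prop:freesF}(3) also gives that the second factor is free as a left $\HF$-module with basis $\{T_{d^{-1}}\}_{d\in\D_F^\dagger}$, so as left $\Hh$-modules
\[
\Hh(j_F)\otimes_{\HF}\Hh\;\cong\;\bigoplus_{d\in\D_F^\dagger} \Hh\otimes T_{d^{-1}},
\]
again free. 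Summing over $F\in\mathscr F_i$ yields the claimed freeness of each term in \eqref{f:complexH} on either side.

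There is essentially no obstacle here, as all the technical work has been done: the construction of the coefficient system $\CH$ and the exactness proof (via contractibility of the subcomplexes $\Aa(n-1)$ and $\Aa(n-1)\cup\overline D$, as in \cite[Prop. 4.16]{OS1}) give the exactness of \eqref{f:complexC}, while Proposition \ref{prop:isoBimo} provides both the bimodule structure and the identification of terms. The only point to be a little careful about is that the $j_F$-twist sits on the left of the tensor and is compatible with the right $\HF$-module structure used in Proposition \ref{prop:frees}, so that the freeness arguments on the two sides proceed independently.
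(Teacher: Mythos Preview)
Your proposal is correct and follows exactly the intended route: the corollary is immediate from the exactness theorem for \eqref{f:complexC} together with Proposition \ref{prop:isoBimo}, and freeness on each side comes from Proposition \ref{prop:frees}. One small slip: in the notation $\Hh(j_F)$ it is the \emph{right} $\HF$-action that is twisted by $j_F$, not the left one; this does not affect your freeness argument since $j_F$ is an automorphism (so $\Hh(j_F)$ is still free over $\HF$ on the right with the same basis $\{T_d\}_{d\in\D_F^\dagger}$, as used in the proof of Proposition \ref{prop:isoBimo}), but you should correct the sentence.
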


For later use we record an explicit description of the first (left) differential in (\ref{f:complexH}). For simplicity, fix an orientation $(C, c_C)$ of $C$ and for each  of its codimension $1$ facet $F$ choose the orientation $(F, c_{F}):=(C, c_C)\vert_{F}$. In this case one checks (using the explicit form  of the isomorphism  \eqref{f:iso}, also compare with \cite[Cor. 6.7]{OS1}) that the first differential in (\ref{f:complexH}) is given by 
\begin{equation}\label{f:diffC}
\partial\quad :\quad \Hh(j_C)\otimes_{\HC} \Hh \ni\quad  1 \otimes 1 \longmapsto  \sum_{F\in \mathscr F_{d-1}}\sum_{\omega\in \Omega/\Omega_{F}} j_C(T_\omega) \otimes T_{\omega^{-1}} \quad \in \bigoplus_{F\in \mathscr F_{d-1}}\Hh(j_F)\otimes_{\HF} \Hh.
\end{equation}

\section{Dualizing complexes}
 
In this section we work over a fixed ground field $\k$. We assume that all rings are left and right Noetherian algebras over $\k$. For a $\k$-algebra $A$ we denote by $D^b(A)$ the bounded derived category of finitely generated $A$-modules.

\subsection{Rigid dualizing complexes}\label{sec:rigiddualizing}

The following definition follows \cite{Ye1}. 

\begin{definition}\label{def:dualizing1}
An object $\R \in D^b(A \otimes_\k A^o)$ is called a dualizing if 
\begin{enumerate}
\item $\R$ has finite injective dimension over $A$ and $A^o$,
\item the cohomology of $\R$ is given by bimodules which are finitely generated on both sides,
\item the natural morphisms $A \to \RHom_A(\R,\R)$ and $A \to \RHom_{A^o}(\R,\R)$ are isomorphisms in $D(A \otimes_\k A^o)$.
\end{enumerate}
More generally, suppose $\z  \subset A$ is a finitely generated,  commutative, central  $k$-subalgebra    so that $A$ is flat over $\z $. Then $\R \in D^b(A \otimes_\z  A^o)$ is called dualizing if its restriction to $D^b(A \otimes_k A^o)$ is dualizing. 
\end{definition}

 \begin{remark}
We could weaken the assumption that $A$ is flat over $\z $ to $A$ having finite tor-dimension over $\z $ but at the cost of working with dg-algebras. 
\end{remark}

\begin{remark}
A dualizing complex $\R \in D^b(A \otimes_\k A^o)$ induces equivalences 
$$\RHom_A(-,\R): D^b(A) \leftrightarrows D^b(A^o): \RHom_{A^o}(-,\R)$$
which explains the terminology 	``dualizing complex'' (see \cite[Prop. 3.5]{Ye1}). 
\end{remark}

\begin{definition}\label{def:dualizing2}
Let $\z  \subset A$ be as in Definition \ref{def:dualizing1}. Then $\R \in D^b(A \otimes_\z  A^o)$  is $\z $-rigid if there is an isomorphism 
$$\phi: \R \rightarrow \RHom_{A \otimes_\z  A^o}(A, \R \otimes_\z  \R)$$
in $D(A \otimes_\z  A^o)$. If $\z =\k$ then we say that $\R$ is rigid. 
\end{definition}

\begin{remark}
The definition above appears in \cite[Def. 8.1]{Ber} when $\z  = \k$. We will need the mild generalization above for our applications. It is shown in \cite{Ber} that, if they exist, rigid dualizing complexes are unique. The same argument also shows that $\z $-rigid dualizing complexes are unique (when $\z $ is regular this also follows from Lemma \ref{lem:dualizing} below). When it exists we will denote the rigid (resp. $\z $-rigid) dualizing complex by $\R_A$ (resp. $\R_{A/\z })$. 
\end{remark}

If $\pi_\z : \Spec \z  \to \Spec \k$ is the natural map then $\R_\z  := \pi_\z ^!(\k) \in D(\z )$ is rigid when viewed as a $\z $-bimodule. Here $\pi_\z ^!$ is the usual twisted inverse image functor from Grothendieck duality. Similarly, if $A$ is commutative then $\R_A := \pi_A^!(\k) \in D(A)$ is rigid (where $\pi_A: \Spec A  \to \Spec \k$) and $\R_{A/\z } := \pi^!(\z ) \in D(A)$ is $\z $-rigid where $\pi: \Spec A \to \Spec \z $.  Thus Definition \ref{def:dualizing2} is an attempt to identify this canonical relative dualizing object when $A$ is not commutative. 

Since $\pi$ is flat it also follows that $\pi^!(\R_\z ) \cong \pi^!(\z ) \otimes_\z  \R_\z $ (see for example \cite[Thm. 4.9.4]{LH}). In other words, $\R_A \cong \R_{A/\z } \otimes_\z  \R_\z $. The following result is a non-commutative analogue of this observation. 

\begin{lemma}\label{lem:dualizing}
Let $\z  \subset A$ be as in Definition \ref{def:dualizing1} and suppose $\z $ is regular with rigid dualizing complex $\R_\z $. Then $A$ has a rigid dualizing complex $\R_A$ if and only if it has $\z $-rigid dualizing complex $\R_{A/\z }$. In this case 
\begin{equation}\label{eq:RAC}
\R_A \cong \R_{A/\z } \otimes_\z  \R_\z  \in D(A \otimes_\z  A^o).
\end{equation}
\end{lemma}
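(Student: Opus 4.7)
My plan is to mimic the commutative identity $\pi_A^!(\k) \cong \pi^!(\z) \otimes_\z \pi_\z^!(\k)$ for the composition $\Spec A \xrightarrow{\pi} \Spec \z \xrightarrow{\pi_\z} \Spec \k$. Because $\z$ is a regular, finitely generated, commutative $\k$-algebra, its rigid dualizing complex $\R_\z$ is quasi-isomorphic to a shifted invertible $\z$-module; in particular $\R_\z$ is $\otimes_\z$-invertible, and I will write $\R_\z^{-1}$ for its quasi-inverse. Given $\R_{A/\z}$, the first step is to set $\R_A := \R_{A/\z} \otimes_\z \R_\z$ and check that this satisfies Definition \ref{def:dualizing1}: finite injective dimension, bi-finite cohomology, and both reflexivity isomorphisms all transfer from $\R_{A/\z}$ to $\R_A$ via the auto-equivalence $-\otimes_\z \R_\z$ of $D(A \otimes_\z A^o)$.

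The main step is to verify the $\k$-rigidity isomorphism $\R_A \xrightarrow{\sim} \RHom_{A \otimes_\k A^o}(A, \R_A \otimes_\k \R_A)$. I would first establish the base-change identity
\[
\RHom_{A \otimes_\k A^o}(A, M) \;\cong\; \RHom_{A \otimes_\z A^o}\bigl(A, \RHom_{\z \otimes_\k \z}(\z, M)\bigr)
\]
for $M \in D(A \otimes_\k A^o)$, which follows from tensor-Hom adjunction once one observes that $A \otimes_\z A^o \cong (A \otimes_\k A^o) \otimes_{\z \otimes_\k \z} \z$ as a non-derived tensor (the non-derived-ness uses the flatness of $A$ over $\z$). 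Applying this to $M = \R_A \otimes_\k \R_A = \R_{A/\z} \otimes_\z (\R_\z \otimes_\k \R_\z) \otimes_\z \R_{A/\z}$, one sees that the outer central $(\z \otimes_\k \z)$-action on $M$ factors through the middle $(\R_\z \otimes_\k \R_\z)$-piece, so I can slide the outer $\R_{A/\z}$'s past the inner $\RHom_{\z \otimes_\k \z}(\z,-)$ and apply the commutative rigidity $\RHom_{\z \otimes_\k \z}(\z, \R_\z \otimes_\k \R_\z) \cong \R_\z$ to collapse the middle factor. The resulting expression $\RHom_{A \otimes_\z A^o}(A, \R_{A/\z} \otimes_\z \R_{A/\z} \otimes_\z \R_\z)$ simplifies, by pulling the central $\R_\z$ out of the $\RHom$ and invoking the $\z$-rigidity of $\R_{A/\z}$, to $\R_{A/\z} \otimes_\z \R_\z = \R_A$, as required.

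For the converse direction, given a rigid dualizing complex $\R_A$, I would set $\R_{A/\z} := \R_A \otimes_\z \R_\z^{-1}$ and run the chain of isomorphisms in reverse to verify $\z$-rigidity; the identification \eqref{eq:RAC} then follows from the uniqueness of rigid (respectively, $\z$-rigid) dualizing complexes recorded after Definition \ref{def:dualizing2}. The main obstacle will be rigorously justifying the sliding step
\[
\RHom_{\z \otimes_\k \z}\bigl(\z,\; \R_{A/\z} \otimes_\z (\R_\z \otimes_\k \R_\z) \otimes_\z \R_{A/\z}\bigr) \;\cong\; \R_{A/\z} \otimes_\z \R_\z \otimes_\z \R_{A/\z},
\]
which requires the regularity of $\z$ (so that $\R_\z$ is locally a shifted invertible module, in particular flat) together with the flatness of $A$, and hence of $\R_{A/\z}$, over $\z$ to ensure that the several commuting bimodule structures on $\R_A \otimes_\k \R_A$ behave well under this rearrangement.
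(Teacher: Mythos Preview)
Your approach is essentially the same as the paper's: reduce the dualizing property to the invertibility of $\R_\z$, then compare $\RHom_{A\otimes_\k A^o}(A,\R_A\otimes_\k\R_A)$ with $\RHom_{A\otimes_\z A^o}(A,\R_{A/\z}\otimes_\z\R_{A/\z})\otimes_\z\R_\z$ via the base-change identity $A\otimes_\z A^o\cong(A\otimes_\k A^o)\otimes_{\z\otimes_\k\z}\z$ and the rigidity of $\R_\z$. The chain of isomorphisms you sketch matches the paper's line by line.

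The one point to tighten is your justification of the sliding step. You write that it ``requires \ldots\ the flatness of $A$, and hence of $\R_{A/\z}$, over $\z$''. The implication ``$A$ flat over $\z$ $\Rightarrow$ $\R_{A/\z}$ flat over $\z$'' is neither obvious nor the right lever: the cohomology of $\R_{A/\z}$ is only finitely generated over $A$, not over $\z$, and flatness of a tensor factor does not by itself let you commute $\RHom_{\z\otimes_\k\z}(\z,-)$ with that tensor. The paper's argument instead uses that $\z$ is \emph{perfect} in $D(\z\otimes_\k\z)$ (equivalently, $\z$ is smooth over $\k$, which follows from regularity of the finitely generated $\k$-algebra $\z$). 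Once you resolve $\z$ by a bounded complex of finite free $\z\otimes_\k\z$-modules, $\RHom_{\z\otimes_\k\z}(\z,-)$ becomes a finite iterated cone of identity functors and commutes with any tensor product, no hypothesis on $\R_{A/\z}$ needed. Replace your flatness-of-$\R_{A/\z}$ clause with this perfectness argument and the proof goes through exactly as you outline.
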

\begin{proof}
Since $\z $ is Noetherian and  regular its rigid dualizing complex $\R_\z $ is invertible.  Thus, for $\R_A$ and $\R_{A/\z }$ satisfying (\ref{eq:RAC}), it follows that $\R_A$ is dualizing if and only if $\R_{A/\z }$ is dualizing. It remains to show that $\R_A$ is rigid if and only if $\R_{A/\z }$ is $\z $-rigid. 

To see this, consider the following sequences of isomorphisms
\begin{align*}
\RHom_{A \otimes_\k A^o}(A, \R_A \otimes_\k \R_A) 
&\cong \RHom_{A \otimes_\z  A^o}(A \otimes_{A \otimes_\k A^o} (A \otimes_\z  A^o), (\R_{A/\z } \otimes_\z  \R_\z ) \otimes_\k (\R_{A/\z } \otimes_\z  \R_\z )) \\
&\cong \RHom_{A \otimes_\z  A^o}(A \otimes_{\z  \otimes_\k \z } \z , (\R_\z  \otimes_\k \R_\z ) \otimes_{\z  \otimes_\k \z } (\R_{A/\z } \otimes_\k \R_{A/\z })) \\
&\cong \RHom_{A \otimes_\z  A^o}(A, \RHom_{\z  \otimes_\k \z }(\z , (\R_\z  \otimes_\k \R_\z ) \otimes_{\z  \otimes_\k \z } (\R_{A/\z } \otimes_\k \R_{A/\z })))\\
&\cong \RHom_{A \otimes_\z  A^o}(A, \RHom_{\z  \otimes_\k \z }(\z , \R_\z  \otimes_\k \R_\z ) \otimes_{\z  \otimes_\k \z } (\R_{A/\z } \otimes_\k \R_{A/\z })) \\
&\cong \RHom_{A \otimes_\z  A^o}(A, \R_\z  \otimes_{\z  \otimes_\k \z } (\R_{A/\z } \otimes_\k \R_{A/\z })) \\
&\cong \RHom_{A \otimes_\z  A^o}(A, \R_\z  \otimes_\z  (\R_{A/\z } \otimes_\z  \R_{A/\z })) \\
&\cong \RHom_{A \otimes_\z  A^o}(A, \R_{A/\z } \otimes_\z  \R_{A/\z }) \otimes_\z  \R_\z  
\end{align*}
Here the first isomorphism is by adjunction of restriction and induction, the second is by base change noting that 
$$A \otimes_\z  A^o \cong (A \otimes_\k A^o) \otimes_{\z  \otimes_\k \z } \z $$
the third is by adjunction between tensor product and hom, the fourth uses that $\z $ is perfect inside $D(\z  \otimes_\k \z )$ because $\z $ is regular, the fifth uses that $\R_\z $ is rigid, the sixth uses that the left and right actions of $\z $ on $\R_\z $ agree and the last uses that $\R_\z $ is (up to shift) a locally free $\z $-module since $\z $ is regular. Since $\R_\z $ is invertible,  the result follows. 
\end{proof}

\subsection{Traces}

\begin{definition}\label{def:trace}
Let $f: A \to B$ be a finite morphism of $\k$-algebras and suppose $A$ and $B$ have rigid dualizing complexes $(\R_A,\phi_A)$ and $(\R_B,\phi_B)$. Then $\tr_{B/A}: \R_B \to \R_A$ in $D(A \otimes_\k A)$ is called a trace morphism if the following conditions hold:
\begin{enumerate}
\item $\tr_{B/A}$ induces an isomorphism in $D(A \otimes_\k A)$
\begin{equation}\label{eq:trace1}
\R_B \cong \RHom_A(B,\R_A) \cong \RHom_{A^o}(B,\R_A)
\end{equation}
\item the following diagram in $D(A \otimes_\k A)$ commutes
\begin{equation}\label{eq:trace2}
\xymatrix{
\R_B \ar[rrr]^(.35){\phi_B}  \ar[d]^{\tr_{B/A}} & & & \RHom_{B \otimes_\k B}(B, \R_B \otimes_\k \R_B) \ar[d]^{\tr_{B/A} \otimes  \tr_{B/A}} \\
\R_A \ar[rrr]^(.35){\phi_A} & & & \RHom_{A \otimes_\k A}(A, \R_A \otimes_\k \R_A)
}
\end{equation}
\end{enumerate}
\end{definition}

If they exist, trace morphisms $\tr_{B/A}$ are unique \cite[Thm. 3.2]{YZ1}. A consequence of traces is the following duality result. It is a non-commutative counterpart of the result in algebraic geometry which states that for a proper morphism $f: X \to Y$ of schemes of finite type one has 
$$f_* \RHom_X(M, f^! (N)) \cong \RHom_Y(f_* M, N)$$
for coherent sheaves $M,N$ on $X,Y$ respectively. 

\begin{corollary}[Prop. 3.9(1) \cite{YZ1}]\label{cor:dual1} 
Suppose we are in the setup of Definition \ref{def:trace}. Then $\tr_{B/A}$ induces a natural isomorphism 
\begin{equation}\label{eq:duality1}
\Res^{B^o}_{A^o} \circ \RHom_B(-,\R_B) \cong \RHom_A(\Res^B_A(-), \R_A): D_f(B) \to D_f(A^o).
\end{equation}
\end{corollary}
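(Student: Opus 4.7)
The plan is to derive the isomorphism as a purely formal consequence of condition~(1) of Definition~\ref{def:trace}, using only hom-tensor adjunction. The rigidity-compatibility condition \eqref{eq:trace2} will play no role.

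The key input is the concrete form of the isomorphism \eqref{eq:trace1}: the trace $\tr_{B/A}\colon \R_B\to\R_A$ induces the map
\[
\R_B \longrightarrow \RHom_A(B,\R_A),\qquad x\longmapsto \bigl(b\mapsto \tr_{B/A}(bx)\bigr),
\]
which is an isomorphism in $D(A\otimes_\k A^o)$. In particular, restricted to the right $A$-action, the target carries the right $A$-structure coming from $\R_A$, and this coincides, under the isomorphism, with the right $A$-structure on $\R_B$ pulled back through $f\colon A\to B$. The left $B$-structure on $\RHom_A(B,\R_A)$ arises from right multiplication on $B$, and matches the left $B$-structure on $\R_B$ by the $A$-bilinearity of $\tr_{B/A}$.

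Given $M\in D_f(B)$, I would then run the following computation:
\begin{align*}
\Res^{B^o}_{A^o}\RHom_B(M,\R_B)
&\cong \Res^{B^o}_{A^o}\RHom_B\bigl(M,\RHom_A(B,\R_A)\bigr)\\
&\cong \Res^{B^o}_{A^o}\RHom_A\bigl(B\otimes^L_B M,\R_A\bigr)\\
&\cong \RHom_A\bigl(\Res^B_A M,\R_A\bigr).
\end{align*}
Here the first step inserts \eqref{eq:trace1} (using the left $B$-action on $\RHom_A(B,\R_A)$ described above), the second is the standard hom-tensor adjunction for the $(A,B)$-bimodule $B$, and the third uses $B\otimes^L_B M\simeq M$ together with the fact that the induced left $A$-structure is exactly $\Res^B_A M$. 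Finiteness of the cohomology over $A^o$ is automatic because $B$ is finite over $A$, so $M$ is finitely generated over $A$, and $\R_A$ being dualizing preserves $D_f$.

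The main obstacle is nothing more than careful bookkeeping of bimodule structures: at each step one must verify that the right $A$-actions on both sides of the isomorphism agree, which ultimately reduces to the fact that \eqref{eq:trace1} lives in $D(A\otimes_\k A^o)$ (not merely in $D(A)$ or $D(A^o)$) and that $\tr_{B/A}$ is $A$-bilinear. No additional input beyond the defining axioms of the trace is needed.
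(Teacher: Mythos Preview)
Your argument is correct and is the standard one. Note that the paper does not supply its own proof here: the corollary is simply quoted from \cite[Prop.~3.9(1)]{YZ1}, so there is no independent approach to compare against. What you have written is essentially the proof one finds in that reference---use \eqref{eq:trace1} to replace $\R_B$ by $\RHom_A(B,\R_A)$ and then apply hom--tensor adjunction.

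The one point worth making explicit (you gesture at it, but it deserves a sentence) is why the isomorphism \eqref{eq:trace1}, which is only asserted in $D(A\otimes_\k A^o)$, may be used inside $\RHom_B(M,-)$. The map $\R_B\to\RHom_A(B,\R_A)$ is the composite of the unit $\R_B\to\RHom_A(B,\R_B|_A)$ with $\RHom_A(B,\tr_{B/A})$; both factors are morphisms in $D(B\otimes_\k A^o)$, so the composite is as well, and a morphism in $D(B\otimes_\k A^o)$ whose image in $D(A\otimes_\k A^o)$ is an isomorphism is itself an isomorphism (the forgetful functor reflects isomorphisms). With that said, your chain of isomorphisms goes through verbatim.
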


\subsection{Existence results}\label{sec:existence}

There are various results which guarantee the existence of rigid dualizing complexes. We highlight the ones which are relevant to our situation. 

\begin{definition}\label{def:differentialk}
Following \cite{YZ2} we say that $A$ is a differential $\k$-algebra of finite type if it has an exhaustive filtration $\{F_i A\}_{i \in \Z}$ such that the associated graded $\gr^F(A)$ is a finite module over its center which is a finitely generated $k$-algebra. 
\end{definition}

A nice consequence of \cite[Thm. 3.1]{YZ2} is that any differential $\k$-algebra $A$ of finite type has a rigid dualizing complex $\R_A$ \cite[Thm. 8.1]{YZ2}. More generally, if $A \to B$ is a finite centralizing homomorphism of $\k$-algebras then $B$ also has a rigid dualizing complex $\R_B$ and moreover there exists a trace morphism $\tr_{B/A}: \R_B \to \R_A$ \cite[Thm. 6.17]{YZ1}. Here $A \to B$ is finite centralizing if there exists a finite set $\{b_i\} \subset B$ commuting with $A$ such that $B = \sum A \cdot b_i$. In particular, this implies the following result.

\begin{proposition}\label{prop:finite/Z}
Suppose $A$ is finite over a central subalgebra $Z \subset A$ which is a finitely generated $k$-algebra. Then there exist rigid dualizing complexes $\R_A$ and $\R_Z$ with $\R_A \cong \RHom_Z(A, \R_Z)$. Moreover, there exists a trace map $\tr_{A/Z}: \R_A \to \R_Z$. If $Z$ is also regular then $\R_{A/Z} \cong \RHom_Z(A,Z)$. 
\end{proposition}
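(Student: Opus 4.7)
The plan is to assemble the proposition from three cited existence results, each handling a different piece. First, since $Z$ is a commutative finitely generated $\k$-algebra, it qualifies trivially as a differential $\k$-algebra of finite type in the sense of Definition \ref{def:differentialk} (use the trivial filtration), so \cite[Thm.~8.1]{YZ2} directly produces the rigid dualizing complex $\R_Z$. Alternatively one may simply take $\R_Z := \pi_Z^!(\k)$ from classical Grothendieck duality and invoke the fact recalled just after Definition \ref{def:dualizing2} that this is rigid as a $Z$-bimodule.

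Second, the inclusion $Z \hookrightarrow A$ is a finite centralizing homomorphism, because $Z$ is central in $A$ and any finite set of $Z$-module generators of $A$ centralizes $Z$. Applying \cite[Thm.~6.17]{YZ1} to this map produces the rigid dualizing complex $\R_A$ together with a trace morphism $\tr_{A/Z} \colon \R_A \to \R_Z$ satisfying the two conditions of Definition \ref{def:trace}. In particular, condition \eqref{eq:trace1} applied to $\tr_{A/Z}$ is precisely the stated isomorphism
$$\R_A \cong \RHom_Z(A, \R_Z) \cong \RHom_{Z^o}(A, \R_Z)$$
in $D(Z \otimes_\k Z)$.

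For the last statement, assume $Z$ is regular. Then $\R_Z$ is an invertible object of $D(Z)$ (concentrated in a single degree on each connected component of $\Spec Z$, with stalks the canonical modules). Under the implicit flatness of $A$ over $Z$ needed to invoke the relative form of Definition \ref{def:dualizing1}, Lemma \ref{lem:dualizing} applied to $Z \subset A$ yields a $Z$-rigid dualizing complex $\R_{A/Z}$ with $\R_A \cong \R_{A/Z} \otimes_Z \R_Z$. Tensoring the previous isomorphism with the inverse of $\R_Z$ and using invertibility of $\R_Z$ gives
$$\R_{A/Z} \cong \RHom_Z(A, \R_Z) \otimes_Z \R_Z^{-1} \cong \RHom_Z(A, Z),$$
which is the desired formula.

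There is no single hard step here: the substantive content is entirely contained in the Yekutieli--Zhang existence results \cite[Thm.~8.1]{YZ2} and \cite[Thm.~6.17]{YZ1}, together with Lemma \ref{lem:dualizing}. The only minor obstacle is verifying that the hypotheses of those theorems are met---specifically that the $Z$-central inclusion $Z \hookrightarrow A$ is finite centralizing, and (in the regular case) that one has enough flatness to speak of the $Z$-rigid object $\R_{A/Z}$; both are immediate from the stated hypotheses.
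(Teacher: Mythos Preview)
Your proposal is correct and follows essentially the same route as the paper: invoke \cite[Thm.~6.17]{YZ1} for the existence of $\R_A$, $\R_Z$ and the trace, then use invertibility of $\R_Z$ together with Lemma~\ref{lem:dualizing} for the regular case. The only cosmetic difference is that you extract the isomorphism $\R_A \cong \RHom_Z(A,\R_Z)$ directly from condition~\eqref{eq:trace1} in the definition of a trace, whereas the paper cites \cite[Prop.~5.9]{Ye2} for it; your observation that the flatness hypothesis of Definition~\ref{def:dualizing1} is being used implicitly in the last step is accurate and in fact slightly more careful than the paper's own proof.
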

\begin{proof}
Since $Z \to A$ is finite centralizing we know there exist rigid dualizing complexes $\R_A$ and $\R_Z$ as well as the trace map $\tr_{A/Z}$. The isomorphism  $\R_A \cong \RHom_Z(A,\R_Z)$ follows from the proof of \cite[Prop. 5.9]{Ye2}. Finally, if $Z$ is regular, then $\R_Z$ is invertible and $\RHom_Z(A, \R_Z) \cong \RHom_Z(A,Z) \otimes_Z \R_Z$. Thus the last isomorphism follows from Lemma \ref{lem:dualizing}. 
\end{proof}

We say $A$ is Gorenstein if $A$ itself is a dualizing complex (cf. \cite[p. 68]{Ye1}). In this case any dualizing complex is invertible (\cite[Thm. 3.9]{Ye1}). 

\begin{proposition}\label{prop:gorenstein-dualizing}
Consider $\z  \subset A$ as in Definition \ref{def:dualizing1} where $\z $ is furthermore regular and $A$ is Gorenstein. If $A$ has a $\z$-rigid dualizing complex $\R_{A/\z}$ then 
\begin{equation}\label{eq:RAC2}
\R_{A/\z}^{-1} = \RHom_{A \otimes_\z  A^o}(A,A \otimes_\z  A)
\end{equation}  
where the action of $A \otimes_\z  A^o$ is via the outer action on $A \otimes_\z  A$. 
\end{proposition}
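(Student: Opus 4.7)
Let me set $\R := \R_{A/\z}$ and $B := A \otimes_\z A^o$. First I will establish that $\R$ is invertible in $D(B)$. Since $A$ is Gorenstein, $\R_A$ is invertible by \cite[Thm.\ 3.9]{Ye1}; since $\z$ is regular, $\R_\z$ is invertible; and Lemma \ref{lem:dualizing} gives $\R_A \cong \R \otimes_\z \R_\z$. Combining these, $\R$ is invertible as an $(A,A)$-bimodule, with an inverse $\R^{-1}$ satisfying $\R \otimes^L_A \R^{-1} \cong A \cong \R^{-1} \otimes^L_A \R$. In particular, both $\R$ and $\R^{-1}$ are perfect as $A$-bimodules.

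The key tool will be a projection formula: for any $M \in D(B^e)$ viewed with its outer $B$-action (the one used for $\RHom_B$) together with a commuting inner $B$-action, and any perfect $A$-bimodule $N$, one has
$$\RHom_B(A, M) \otimes^L_A N \;\cong\; \RHom_B(A, M \otimes^L_A N),$$
where $\otimes^L_A N$ on the right is taken over one of the inner $A$-actions of $M$. This holds because the inner and outer actions commute, and perfectness of $N$ makes the natural comparison map an isomorphism. My plan is to apply this formula twice, starting from the rigidity isomorphism $\R \cong \RHom_B(A, \R \otimes_\z \R)$, each time tensoring with $\R^{-1}$ to convert one $\R$-factor on the right-hand side into an $A$-factor.

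Concretely, first apply the projection formula with $N = \R^{-1}$, tensoring against the inner $A$-action of $\R \otimes_\z \R$ coming from the right action on the first $\R$-factor; this contracts $\R \otimes^L_A \R^{-1} \cong A$ in that slot, turning $\R \otimes_\z \R$ into $A \otimes_\z \R$. Rigidity identifies the left-hand side, giving
$$A \;\cong\; \R \otimes^L_A \R^{-1} \;\cong\; \RHom_B(A, A \otimes_\z \R).$$
Now apply the projection formula a second time with $N = \R^{-1}$, this time tensoring on the left, against the inner $A$-action of $A \otimes_\z \R$ coming from the left action on the $\R$-factor; this contracts $\R^{-1} \otimes^L_A \R \cong A$ in that slot, converting $A \otimes_\z \R$ into $A \otimes_\z A$. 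The previous iso identifies the left-hand side, giving
$$\R^{-1} \;\cong\; \R^{-1} \otimes^L_A A \;\cong\; \RHom_B(A, A \otimes_\z A),$$
as claimed.

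The main obstacle will be the careful bookkeeping of inner versus outer $A$-actions on the four-fold tensor bimodule $\R \otimes_\z \R$, and then on $A \otimes_\z \R$ after the first step, to confirm that the projection formula is being applied to the correct inner action and that the result lands in $A \otimes_\z A$ with the outer $B$-action prescribed by the statement. Once one sets up the conventions — so that the inner right (resp. inner left) action on $\R \otimes_\z \R$ corresponds under rigidity to the right (resp. left) $A$-action on $\R$ — the rest reduces to routine invertibility manipulations using $\R \otimes^L_A \R^{-1} \cong A \cong \R^{-1} \otimes^L_A \R$.
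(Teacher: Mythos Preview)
Your proof is correct and follows essentially the same approach as the paper: establish invertibility of $\R_{A/\z}$ via Lemma \ref{lem:dualizing}, then use the projection formula to pull the invertible factors $\R^{-1}$ out of $\RHom_B(A,-)$ and apply rigidity. The only cosmetic difference is that you tensor with $\R^{-1}$ twice over $A$ (once on each inner side), whereas the paper does it in a single step by tensoring with $\R^{-1} \otimes_\z \R^{-1}$ over $A \otimes_\z A^o$; the bookkeeping you flag as ``the main obstacle'' is exactly what the paper's one-shot formulation sidesteps.
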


\begin{proof}
This is proved in \cite[Prop. 8.4]{Ber} when $\z  = \k$ but essentially the same argument works more generally.  We reproduce it here for completeness. Using that $\R_A$ and $\R_\z $ are invertible we find, using Lemma \ref{lem:dualizing}, that so is $\R_{A/\z }$. Then we get
\begin{align*}
\RHom_{A \otimes_\z  A^o}(A, A \otimes_\z  A^o) 
& \cong \RHom_{A \otimes_\z  A^o}(A, \R_{A/\z } \otimes_A \R_{A/\z }^{-1} \otimes_\z  \R_{A/\z }^{-1} \otimes_{A^o} \R_{A/\z }) \\
& \cong \RHom_{A \otimes_\z  A^o}(A, (\R_{A/\z } \otimes_\z  \R_{A/\z }) \otimes_{A \otimes_\z  A^o} (\R_{A/\z }^{-1} \otimes_\z  \R_{A/\z }^{-1})) \\
& \cong \RHom_{A \otimes_\z  A^o}(A, \R_{A/\z } \otimes_\z  \R_{A/\z }) \otimes_{A \otimes_\z  A^o} (\R_{A/\z }^{-1} \otimes_\z  \R_{A/\z }^{-1}) \\
& \cong \R_{A/\z } \otimes_{A \otimes_\z  A^o} (\R_{A/\z }^{-1} \otimes_\z  \R_{A/\z }^{-1}) \\
& \cong \R^{-1}_{A/\z } \otimes_A \R_{A/\z } \otimes_A \R_{A/\z }^{-1} \cong \R_{A/\z }^{-1}. 
\end{align*}
\end{proof}

\subsection{Base change}

Let $\z  \subset A$ be as in Definition \ref{def:dualizing1} and $\z'$ a  commutative, finitely generated $k$-algebra. Our running assumption in this section is that $\z,\z'$ are regular. Consider the base change $A' := A \otimes_\z \z'$ with respect some homomorphism $\z \to \z'$.
\begin{lemma}\label{lem:basechange}
If $\R \in D^b(A \otimes_\z A^o)$ has finite injective dimension over $A$ (resp. $A^o$) then $\RHom_{A \otimes_\z A^o}(A' \otimes_{\z'} A', \R)$ has finite injective dimension over $A'$ (resp. $A'^o$). 
\end{lemma}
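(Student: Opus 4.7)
The plan is to exploit the identification $A' = A \otimes_\z \z'$ to realize $\RHom_\z(\z', -)$ as a derived coinduction functor from $A$-modules to $A'$-modules, and then reduce the claim to the hypothesis on $\R$ by the restriction-coinduction adjunction.

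First I would identify $A' \otimes_{\z'} A'^o$ with $(A \otimes_\z A^o) \otimes_\z \z'$ as left $A \otimes_\z A^o$-modules. This uses that $\z$ is central in $A$ (so the two $\z$-actions on $A \otimes_\z A^o$ coincide) together with the computation
\[
A' \otimes_{\z'} A'^o \;=\; (A \otimes_\z \z') \otimes_{\z'} (\z' \otimes_\z A^o) \;\cong\; A \otimes_\z A^o \otimes_\z \z'.
\]
Tensor-hom adjunction (with $A \otimes_\z A^o$ acting on the first tensor factor) then yields
\[
\RHom_{A \otimes_\z A^o}(A' \otimes_{\z'} A'^o, \R) \;\cong\; \RHom_\z(\z', \R)
\]
in $D(A' \otimes_{\z'} A'^o)$, where the $A'$-bimodule structure on the right comes from the $A$-bimodule structure on $\R$ combined with the $\z'$-action on the source.

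Next I would recognize the right-hand side as the derived coinduction of $\R$ along $A \hookrightarrow A'$. The flatness of $A$ over $\z$ lets us lift a projective resolution of $\z'$ over $\z$ (which is bounded, since $\z$ is regular) to a projective resolution of $A' = A \otimes_\z \z'$ over $A$, giving $\RHom_\z(\z', \R) \cong \RHom_A(A', \R)$. The standard adjunction between restriction and coinduction along $A \to A'$ then gives, for every $N \in D(A')$,
\[
\RHom_{A'}\bigl(N, \RHom_A(A', \R)\bigr) \;\cong\; \RHom_A(N, \R),
\]
where on the right $N$ is regarded as an $A$-complex via restriction. An analogous statement holds with $A^o$ in place of $A$ and $A'^o$ in place of $A'$ by repeating the argument on the opposite side.

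Now if $\R$ has finite injective dimension over $A$, then for any bounded $A'$-complex $N$ the right-hand side of the last display has cohomology supported in a bounded range depending only on $\R$ and the cohomological amplitude of $N$ (and not on $N$ otherwise). This bounds the injective dimension of $\RHom_\z(\z', \R)$ over $A'$, and by symmetry the case of $A^o$ and $A'^o$ is identical. The only step requiring genuine care is the bookkeeping of bimodule structures across the two identifications above; the flatness of $A$ over $\z$ and the regularity of $\z$ (which ensures every $\z$-module has finite projective dimension) make all the derived adjunctions invoked here routine.
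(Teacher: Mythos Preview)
Your proof is correct and follows essentially the same route as the paper: both arguments reduce to the identity
\[
\RHom_{A'}\bigl(N',\,\RHom_{A\otimes_\z A^o}(A'\otimes_{\z'}A',\R)\bigr)\;\cong\;\RHom_A(N',\R)
\]
and conclude finite injective dimension from the hypothesis on $\R$. The only organizational difference is that you first simplify the object itself to $\RHom_\z(\z',\R)\cong\RHom_A(A',\R)$ and then invoke the restriction--coinduction adjunction once, whereas the paper runs a direct chain of tensor--hom adjunctions through $A\otimes_\z A^o$ without pausing to name the intermediate object; the content is the same. One minor remark: your appeal to the regularity of $\z$ (for a bounded projective resolution of $\z'$) is not actually needed at this step, since the adjunction $\RHom_A(A\otimes_\z \z',\R)\cong\RHom_\z(\z',\R)$ holds at the derived level without any finiteness hypothesis on $\z'$.
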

\begin{proof}
For any $N' \in D^b(A')$ we have
\begin{align*}
\RHom_{A'}(N', \RHom_{A \otimes_\z A^o}(A' \otimes_{\z'} A', \R))
& \cong \RHom_{A \otimes_\z A^o}((A' \otimes_{\z'} A') \otimes_{A'} N', \R) \\
& \cong \RHom_{A \otimes_\z A^o}(A' \otimes_{\z'} N', \R) \\
& \cong \RHom_{A \otimes_\z A^o}(A \otimes_{\z} N', \R) \\
& \cong \RHom_{A \otimes_\z A^o}((A \otimes_{\z} A^o) \otimes_A N', \R) \\
& \cong \RHom_A(N', \R) 
\end{align*}
Thus, if $\R$ has finite injective dimension over $A$ then $\RHom_{A \otimes_\z A^o}(A' \otimes_{\z'} A', \R)$ has finite injective dimension over $A'$. The case of $A^o$ and $A'^o$ is similar. 
\end{proof}

\begin{proposition}\label{prop:basechange}
If $\R$ is a dualizing complex of $A$ then $\R \otimes_{\z} \z'$ is a dualizing complex of $A'$. 
\end{proposition}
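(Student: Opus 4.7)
The plan is to verify the three defining conditions of Definition \ref{def:dualizing1} for $\R' := \R \otimes_\z \z'$ viewed in $D^b(A' \otimes_{\z'} A'^o)$ via the canonical identification $A' \otimes_{\z'} A'^o \cong (A \otimes_\z A^o) \otimes_\z \z'$, which holds because $\z$ is central in $A$. Note that regularity of $\z$ ensures $\z'$ has finite Tor-dimension over $\z$, so $\R \otimes_\z \z'$ agrees with the derived tensor product and lies in $D^b$.

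For condition (1), finite injective dimension over $A'$ and over $A'^o$, I would apply Lemma \ref{lem:basechange} directly to $\R$. This yields finite injective dimension of $\R^* := \RHom_{A \otimes_\z A^o}(A' \otimes_{\z'} A', \R)$ over $A'$ and $A'^o$. Hom--tensor adjunction using $A' \otimes_{\z'} A' \cong (A \otimes_\z A^o) \otimes_\z \z'$ identifies $\R^* \cong \RHom_\z(\z', \R)$, which differs from $\R \otimes_\z \z'$ only by tensoring with the relative dualizing complex $\RHom_\z(\z', \z)$. Under the regularity hypotheses on $\z$ and $\z'$ this relative dualizing complex is an invertible object of $D(\z')$, so finite injective dimension transfers from $\R^*$ to $\R \otimes_\z \z'$. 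I expect this step to be the main technical obstacle: one must verify the invertibility and then carefully transport the invertible twist through the $A'$-bimodule structure in $D(A' \otimes_{\z'} A'^o)$.

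For condition (2), finitely generated cohomology on both sides, I would use the convergent, bounded spectral sequence
\[
E_2^{p,q} = \Tor^{-p}_\z(H^q(\R), \z') \;\Longrightarrow\; H^{p+q}(\R \otimes_\z \z')
\]
which has finite support by the finite Tor-dimension remark. Since each $H^q(\R)$ is finitely generated over $A$ and $A^o$ by hypothesis, each $\Tor$-term is finitely generated over $A \otimes_\z \z' = A'$ and over $A'^o$, hence so is each $H^{p+q}(\R')$. Finally, for condition (3), writing $\R' = \R \otimes_A A'$ and using the restriction--coextension adjunction for the ring map $A \to A'$, the dualizing property $A \xrightarrow{\sim} \RHom_A(\R, \R)$ base-changed along $\otimes_\z \z'$ yields the required isomorphism $A' \xrightarrow{\sim} \RHom_{A'}(\R', \R')$, and the opposite side follows symmetrically.
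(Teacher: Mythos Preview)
Your arguments for conditions (2) and (3) are essentially those of the paper and are fine. The genuine gap is in your treatment of condition (1).

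You identify $\R^* := \RHom_{A \otimes_\z A^o}(A' \otimes_{\z'} A', \R) \cong \RHom_\z(\z', \R)$ correctly, but then assert that this differs from $\R \otimes_\z \z'$ by the invertible twist $\RHom_\z(\z', \z)$. Both the factorization $\RHom_\z(\z', \R) \cong (\R \otimes_\z \z') \otimes_{\z'} \RHom_\z(\z', \z)$ and the invertibility of $\RHom_\z(\z', \z)$ in $D(\z')$ require $\z'$ to be \emph{finite} over $\z$; regularity alone is not enough. For the basic example $\z' = \z[x]$, the object $\RHom_\z(\z[x], \z) = \Hom_\z(\z[x], \z)$ is an infinite product of copies of $\z$ and is certainly not invertible over $\z[x]$, and $\RHom_\z(\z[x], \R)$ is a product while $\R \otimes_\z \z[x]$ is a direct sum. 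You may be conflating $\RHom_\z(\z', \z)$ with the relative dualizing complex $\pi^!(\z)$; these agree only for finite $\pi$.

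The paper repairs exactly this point by first reducing to the case where $\z'$ is finite over $\z$: since any finitely generated $\z$-algebra is reached from $\z$ by iterated polynomial extensions followed by a finite map, it suffices to check condition (1) separately for $\z' = \z[x]$, where one argues directly that if $\R$ has injective dimension $e$ over $A$ then $\R \otimes_\z \z[x]$ has injective dimension $e+1$ over $A \otimes_\z \z[x]$. With that reduction in place, your Lemma~\ref{lem:basechange} argument and the invertible-twist comparison go through exactly as you wrote, and this is precisely what the paper does in the finite case.
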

\begin{proof}
We need to check that $\R' := \R \otimes_{\z} \z'$ satisfies the three conditions of Definition \ref{def:dualizing1}. Condition (2) is easy to see since $\z'$ has finite tor-dimension over $\z$ (since they are both regular rings). Condition (3) follows from the commutativity of the following rectangle
$$\xymatrix{
A'  \ar[rrr]^\sim \ar[d] & & & A \otimes_\z \z' \ar[d] \\
\RHom_{A'}(\R',\R') \ar[r]^-\sim & \RHom_{A'}(\R \otimes_\z \z', \R \otimes_\z \z') \ar[r]^-\sim & \RHom_A(\R, \R \otimes_\z \z') \ar[r]^-\sim & \RHom_A(\R, \R) \otimes_\z \z'
}$$

It remains to prove condition (1), namely that $\R'$ has finite injective dimension over $A'$ and $A'^o$. We first reduce to the case $\z'$ is finite over $\z$. To do this it suffices to check condition (1) when $\z' \cong \z[x]$. This follows since if $\R$ has injective dimension $e$ over $A$ (resp. $A^o$) then $\R \otimes_{\z} \z[x]$ has injective dimension $e+1$ over $A \otimes_{\z} \z[x]$ (resp. $A^o \otimes_\z \z[x]$). 

Thus we can assume $\z'$ is finite over $\z$. In this case, 
\begin{align*}
\RHom_{A \otimes_\z A^o}(A' \otimes_{\z'} A', \R) 
&\cong \RHom_{A \otimes_\z A^o}(A \otimes_\z A \otimes_\z \z', \R) \\
&\cong \RHom_{A \otimes_\z A^o}(A \otimes_\z A, \R) \otimes_\z \RHom_\z(\z',\z) \\
&\cong \R \otimes_\z \RHom_\z(\z',\z).
\end{align*}
By Lemma \ref{lem:basechange} it follows that $\R \otimes_\z \RHom_\z(\z',\z)$ has finite injective dimension over $A'$ and $A'^o$. Since $\z'$ is finite over $\z$ with both regular rings it follows that $\RHom_\z(\z',\z) \cong \R_{\z'/\z}$ is invertible as a $\z'$-module and thus $\R \otimes_\z \z'$ also has finite injective dimension over $A'$ and $A'^o$. This completes the proof. 
\end{proof}

\begin{corollary}\label{cor:basechange}
If $\R_{A/\z}$ is the $\z$-rigid dualizing complex of $A$ then $\R_{A/\z} \otimes_\z \z'$ is the $\z'$-rigid dualizing complex of $A'$. 
\end{corollary}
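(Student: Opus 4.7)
The plan is to combine Proposition \ref{prop:basechange} (which handles the dualizing part) with a direct manipulation of the rigidity isomorphism under base change. Write $\R := \R_{A/\z}$ and $\R' := \R \otimes_\z \z'$. By Proposition \ref{prop:basechange}, $\R'$ is already known to be a dualizing complex for $A'$, so the only remaining task is to verify the $\z'$-rigidity isomorphism
$$\R' \cong \RHom_{A' \otimes_{\z'} A'^o}(A', \R' \otimes_{\z'} \R') \quad \text{in } D(A' \otimes_{\z'} A'^o).$$

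To produce this isomorphism, I would start from the given $\z$-rigidity
$$\R \cong \RHom_{A \otimes_\z A^o}(A, \R \otimes_\z \R)$$
in $D(A \otimes_\z A^o)$ and apply $-\otimes_\z \z'$, obtaining
$$\R' \cong \RHom_{A \otimes_\z A^o}(A, \R \otimes_\z \R) \otimes_\z \z'.$$
The natural identifications $A' \otimes_{\z'} A'^o \cong (A \otimes_\z A^o) \otimes_\z \z'$, $A' \cong A \otimes_\z \z'$ as $A' \otimes_{\z'} A'^o$-modules, and $\R' \otimes_{\z'} \R' \cong (\R \otimes_\z \R) \otimes_\z \z'$, together with extension-restriction adjunction along $A \otimes_\z A^o \to A' \otimes_{\z'} A'^o$, give
$$\RHom_{A' \otimes_{\z'} A'^o}(A', \R' \otimes_{\z'} \R') \cong \RHom_{A \otimes_\z A^o}(A, (\R \otimes_\z \R) \otimes_\z \z').$$
Matching these two expressions reduces the corollary to a single ``$\RHom$--base change'' identity
$$\RHom_{A \otimes_\z A^o}(A, M) \otimes_\z \z' \;\cong\; \RHom_{A \otimes_\z A^o}(A, M \otimes_\z \z')$$
for $M = \R \otimes_\z \R$.

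I would prove this identity mimicking the two-case reduction used in the proof of Proposition \ref{prop:basechange}: it suffices to treat $\z' = \z[x]$ (so that $\z'$ is flat over $\z$) and $\z'$ finite over $\z$. The flat case is immediate since $-\otimes_\z \z[x]$ commutes with the cohomologically bounded $\RHom$. In the finite case, regularity of $\z$ and $\z'$ forces $\RHom_\z(\z',\z) \cong \R_{\z'/\z}$ to be an invertible $\z'$-module and $\z'$ to be reflexive over $\z$; consequently $M \otimes_\z \z' \cong \RHom_\z(\RHom_\z(\z',\z), M)$ and by centrality of $\z$ in $A$ this inner $\RHom_\z$ commutes past $\RHom_{A \otimes_\z A^o}(A,-)$ via standard hom--tensor adjunction, yielding the desired isomorphism. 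A general $\z \to \z'$ factors as $\z \to \z[x_1,\ldots,x_n] \twoheadrightarrow \z'$, and the two cases combine.

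The main obstacle is the non-flat finite case in the last step: once it is in place, everything else is a formal juggling of adjunctions. This is precisely the place where the standing regularity hypothesis on $\z$ and $\z'$ (which ensures invertibility of $\R_{\z'/\z}$ and reflexivity of $\z'$) is indispensable.
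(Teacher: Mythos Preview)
Your argument is correct and follows essentially the same route as the paper. Both proofs invoke Proposition~\ref{prop:basechange} for the dualizing part, identify $A' \cong A \otimes_{A \otimes_\z A^o} (A' \otimes_{\z'} A'^o)$ (which is the content of your extension--restriction adjunction), and then reduce rigidity to the single base-change identity $\RHom_{A \otimes_\z A^o}(A, M) \otimes_\z \z' \cong \RHom_{A \otimes_\z A^o}(A, M \otimes_\z \z')$; the paper justifies this step tersely by citing regularity of $\z$ and $\z'$, whereas you spell out the same reduction to the polynomial and finite cases used in Proposition~\ref{prop:basechange}.
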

\begin{proof}
By Proposition \ref{prop:basechange} we know $\R_{A'/\z'} := \R_{A/\z} \otimes_\z \z'$ is a dualizing complex so it remains to show that it is $\z'$-rigid. Note that
\begin{equation}\label{eq:local3}
A \otimes_{A \otimes_\z A} (A' \otimes_{\z'} A') 
\cong A \otimes_{A \otimes_\z A} ((A \otimes_\z \z') \otimes_{\z'} (A \otimes_\z \z')) 
\cong A \otimes_{A \otimes_\z A} (A \otimes_\z A \otimes_\z \z') 
\cong A \otimes_\z \z' \cong A'.
\end{equation}
Thus we get
\begin{align*}
\RHom_{A' \otimes_{\z'} A'}(A', \R_{A'/\z'} \otimes_{\z'} \R_{A'/\z'}) 
& \cong \RHom_{A' \otimes_{\z'} A'}(A', (\R_{A/\z} \otimes_\z \z') \otimes_{\z'} (\R_{A/\z} \otimes_\z \z')) \\
& \cong \RHom_{A' \otimes_{\z'} A'}(A \otimes_{A \otimes_\z A} (A' \otimes_{\z'} A') , \R_{A/\z} \otimes_\z \R_{A/\z} \otimes_\z \z') \\
& \cong \RHom_{A \otimes_{\z} A}(A, \R_{A/\z} \otimes_\z \R_{A/\z} \otimes_\z \z') \\
& \cong \RHom_{A \otimes_{\z} A}(A, \R_{A/\z} \otimes_\z \R_{A/\z}) \otimes_\z \z' \\
& \cong \R_{A/\z} \otimes_\z \z' \cong \R_{A'/\z'}
\end{align*}
where the second isomorphism is by rearranging and using (\ref{eq:local3}), the third is by adjunction between induction and restriction, the fourth uses that $\z$ and $\z'$ are regular, the fifth is because $\R_{A/\z}$ is $\z$-rigid and the last is by definition. This proves that $\R_{A'/\z'}$ is $\z'$-rigid. 
\end{proof}

\subsection{Further consequences}

\begin{proposition}\label{prop:miracle}
Suppose $A$ is finite over a connected, regular, central subalgebra $Z \subset A$ which is   finitely generated as a  $k$-algebra. Then $\R_A$ is supported in one degree (i.e. $A$ is Cohen-Macaulay) if and only if $A$ is projective over $Z$. In this case $\R_A$ is supported in degree $-d$ where $d$ is the Krull dimension of $Z$. 
\end{proposition}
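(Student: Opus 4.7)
The plan is to reduce, via Proposition~\ref{prop:finite/Z}, to a standard commutative-algebra statement about $\RHom_Z(A,Z)$ in the spirit of Hironaka's miracle flatness. By Proposition~\ref{prop:finite/Z} we have $\R_A \cong \RHom_Z(A,\R_Z)$. Since $Z$ is connected, regular, and finitely generated over $k$ of Krull dimension $d$, its rigid dualizing complex $\R_Z$ takes the form $\omega_Z[d]$ for an invertible $Z$-module $\omega_Z$. Because $\omega_Z$ is locally free, tensoring with it commutes with taking cohomology, giving
$$\R_A \;\cong\; \RHom_Z(A,Z)\otimes_Z \omega_Z\,[d].$$
Thus $\R_A$ is concentrated in a single cohomological degree if and only if $\Ext^i_Z(A,Z)=0$ for every $i>0$, and when this holds the sole nonzero cohomology lies in degree $-d$, which gives the last assertion of the proposition.

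It therefore suffices to prove the following purely commutative statement: for a finitely generated module $A$ over a regular Noetherian ring $Z$, we have $\Ext^i_Z(A,Z)=0$ for all $i>0$ if and only if $A$ is projective over $Z$. The ``if'' direction is immediate. For the converse, projectivity of $A$ can be checked locally, so fix a prime $\mathfrak p\subset Z$ and work over the regular local ring $Z_{\mathfrak p}$ of dimension $r$. By local duality on $Z_{\mathfrak p}$, the Matlis dual of $\Ext^i_{Z_{\mathfrak p}}(A_{\mathfrak p},Z_{\mathfrak p})$ is $H^{r-i}_{\mathfrak m_{\mathfrak p}}(A_{\mathfrak p})$, so the vanishing hypothesis forces $H^j_{\mathfrak m_{\mathfrak p}}(A_{\mathfrak p})=0$ for all $j<r$. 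Consequently either $A_{\mathfrak p}=0$, or $\mathrm{depth}_{Z_{\mathfrak p}}(A_{\mathfrak p})\ge r$; in the latter case $\mathrm{depth}_{Z_{\mathfrak p}}(A_{\mathfrak p})=r$ (since depth is bounded by dimension), and the Auslander-Buchsbaum formula applied to $A_{\mathfrak p}$ (which has finite projective dimension as $Z_{\mathfrak p}$ is regular) gives $\mathrm{pd}_{Z_{\mathfrak p}}(A_{\mathfrak p})=0$. Hence $A_{\mathfrak p}$ is free over $Z_{\mathfrak p}$ for every $\mathfrak p$, i.e.\ $A$ is projective over $Z$.

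The main technical input is the Ext-vanishing $\Leftrightarrow$ projectivity equivalence in the second paragraph. This is a well-known consequence of local duality together with Auslander-Buchsbaum, but one should take a little care because $A_{\mathfrak p}$ need not have full support on $\Spec Z_{\mathfrak p}$; the observation that $A_{\mathfrak p}=0$ is a (trivially free) special case handles this. No deeper non-commutative input is required, and the conclusion that $\R_A$ lives in degree $-d$ is then automatic from the identification of $\R_Z$ in the first paragraph.
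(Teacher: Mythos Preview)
Your proof is correct and follows the same overall reduction as the paper: use Proposition~\ref{prop:finite/Z} and the invertibility of $\R_Z$ to translate the question into whether $\RHom_Z(A,Z)$ is concentrated in degree zero. One small implicit step: when you pass from ``$\R_A$ concentrated in one degree'' to ``$\Ext^i_Z(A,Z)=0$ for $i>0$'' you are using that $\Hom_Z(A,Z)\neq 0$, which holds because $Z\hookrightarrow A$ is an inclusion of finite index (so after passing to the fraction field of the domain $Z$ it splits). The paper makes this step explicit; you should too.

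Where you diverge from the paper is in the commutative-algebra core. You localize and invoke local duality together with Auslander--Buchsbaum to deduce $\mathrm{pd}_{Z_{\mathfrak p}}(A_{\mathfrak p})=0$. The paper instead uses the perfectness of $A$ over $Z$ to write $\RHom_Z(A,M)\cong \RHom_Z(A,Z)\otimes_Z^L M$ for every module $M$; once $\RHom_Z(A,Z)$ sits in degree zero, the left side lives in nonnegative degrees (left exactness of $\Hom$) while the right side lives in nonpositive degrees (right exactness of $\otimes$), forcing $\Ext^{>0}_Z(A,M)=0$ for all $M$ and hence projectivity. The paper's argument is more elementary---no local duality, no depth---and works uniformly, while yours is the classical ``miracle flatness'' route and makes the Cohen--Macaulay nature of the statement more transparent. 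Both are valid; note also that in your setting $A_{\mathfrak p}$ is never zero (since $Z_{\mathfrak p}\hookrightarrow A_{\mathfrak p}$), so that case distinction is not actually needed.
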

\begin{proof}
Note that by Proposition \ref{prop:finite/Z} we know $A$ has a rigid dualizing complex $\R_A$. 
Since $Z$ is regular we have 
\begin{equation}\label{eq:local2}
\RHom_Z(A,M) \cong \RHom_Z(A,Z \otimes_Z M) \cong \RHom_Z(A,Z) \otimes_Z M
\end{equation}
for any $M \in D^b(Z)$.  In particular, taking $M = \R_Z$, this gives  
$$\RHom_Z(A,Z) \cong \RHom_Z(A,\R_Z) \otimes_Z \R_Z^{-1} \cong \R_A \otimes_Z \R_Z^{-1}.$$
Thus, if $\R_A$ is Cohen-Macaulay, then $\RHom_Z(A,Z)$ is supported in one degree. But $Z \hookrightarrow A$ and after localizing this map splits. It follows that $\RHom_Z(A,Z)$ must be supported in degree zero. 

On the other hand, $\Hom_Z(A,-)$ is left exact and tensoring is right exact so (\ref{eq:local2}) must be supported in degree zero for any module $M$. In particular, this means $\Hom_Z(A,-)$ is exact and thus $A$ is projective over $Z$. 

Conversely, if $A$ is projective over $Z$ then $\RHom_Z(A,\R_Z) \cong \R_A$ is supported in the same degree as $\R_Z$. Since $\R_Z$ is supported in degree $-d$ the result follows.   
\end{proof}

\begin{remark}
The result of Corollary \ref{prop:miracle} when $A$ is commutative is called Hironaka's criterion (or miracle flatness). 
\end{remark}

\begin{proposition}\label{prop:fin-module}
Suppose $A$ is a differential $\k$-algebra of finite type and $M$ a finite dimensional $A$-module. Then
\begin{equation}\label{eq:duality2}
\RHom_A(M, \R_A) \cong M^\vee \in D(A^o)
\end{equation}
where $\R_A$ is the rigid dualizing complex of $A$. \end{proposition}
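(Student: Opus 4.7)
The plan is to reduce the computation to a finite-dimensional quotient of $A$ on which $M$ lives, and then invoke the general duality isomorphism of Corollary \ref{cor:dual1}.

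First, since $\dim_\k M < \infty$, the structure map $A \to \End_\k(M)$ has finite-dimensional image, which I denote $B$, and the resulting surjection $\pi: A \twoheadrightarrow B$ is finite centralizing. By the existence results recalled in Section \ref{sec:existence} (in particular \cite[Thm. 6.17]{YZ1}), $B$ therefore admits a rigid dualizing complex $\R_B$ and a trace morphism $\tr_{B/A}: \R_B \to \R_A$.

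Next I would identify $\R_B \cong B^\vee := \Hom_\k(B,\k)$, concentrated in degree $0$, with its standard $B$-bimodule structure. The module $B^\vee$ is an injective cogenerator for $B$-mod (Matlis duality for finite-dimensional algebras), so it has injective dimension zero on both sides; its cohomology is finitely generated on both sides; and the biduality map $B \to \RHom_B(B^\vee, B^\vee) \cong \Hom_\k(B^\vee,\k)$ is an isomorphism by ordinary $\k$-linear duality. Hence $B^\vee$ is dualizing. Rigidity follows from tensor-hom adjunction:
$$\RHom_{B \otimes_\k B^o}(B, B^\vee \otimes_\k B^\vee) \cong \RHom_{B \otimes_\k B^o}(B, (B \otimes_\k B)^\vee) \cong B^\vee,$$
so by uniqueness of the rigid dualizing complex we have $\R_B \cong B^\vee$.

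Now I would apply Corollary \ref{cor:dual1} to $\pi: A \to B$ and $M \in D_f(B)$, which yields the natural isomorphism in $D(A^o)$
$$\RHom_A(M, \R_A) \cong \RHom_B(M, \R_B) \cong \Hom_B(M, B^\vee),$$
where the second step uses that $B^\vee$ is $B$-injective. Finally, tensor-hom adjunction gives
$$\Hom_B(M, \Hom_\k(B, \k)) \cong \Hom_\k(B \otimes_B M, \k) \cong \Hom_\k(M, \k) = M^\vee,$$
and a direct check shows that the right $B$-action (hence the right $A$-action, via $\pi$) corresponds on the right-hand side to the canonical right $A$-action on $M^\vee$ induced from the left $A$-action on $M$.

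The only nontrivial point is the identification $\R_B \cong B^\vee$ for a finite-dimensional $\k$-algebra. Everything else is essentially formal: the factorization through $B$ produces a finite morphism, so the duality of Corollary \ref{cor:dual1} applies, and the adjunction computation is straightforward. I would therefore expect most of the work of writing this up to consist of verifying the dualizing and rigid properties of $B^\vee$ (which are standard but worth spelling out for the reader).
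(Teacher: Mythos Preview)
Your proof is correct and follows essentially the same route as the paper: factor through the finite-dimensional quotient $B = A/\ker(A \to \End_\k(M))$, use that $A \twoheadrightarrow B$ is finite centralizing to get a trace, apply Corollary \ref{cor:dual1}, identify $\R_B \cong B^\vee$, and finish by tensor-hom adjunction. The only difference is that you spell out the verification of $\R_B \cong B^\vee$ in more detail than the paper does.
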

\begin{proof}
The following argument follows the one from \cite[Cor.  2.2]{Ye3} (we thank Amnon Yekutieli for pointing out his result).  Let $B := A/I$ where $I$ is the kernel of the canonical map $A \to \End_\k(M)$. Note that $A \to B$ is surjective and hence finite centralizing. It follows by \cite[Thm. 6.17]{YZ1} that there exists a trace map $\tr_{B/A}$. 

By construction we have a canonical $M' \in D(B)$ with $M = \Res^B_A(M')$. Thus, using (\ref{eq:duality1}), we get that
\begin{equation}\label{eq:local1}
\Res^{B^o}_{A^o} \RHom_B(M',\R_B) \cong \RHom_A(M,\R_A).
\end{equation}
On the other hand, since $B$ is finite dimensional over $\k$, $\R_B \cong B^\vee = \Hom_\k(B,\k)$. It is standard to check that $\Hom_B(M',\Hom_\k(B,\k)) \cong \Hom_\k(M', \k)$. Since $\Res^{B^o}_{A^o}(\Hom_\k(M', \k)) \cong M^\vee$ the required isomorphism (\ref{eq:duality2}) follows from (\ref{eq:local1}). 
\end{proof}

\section{\label{sec:rigid}Rigid dualizing complexes of affine Hecke algebras}

In this section we assume $\r$ is a regular,  finitely generated $k$-algebra. This condition on $\r$ is only used to ensure that $\z = \r[\a,\b][Q^\perp] \subset \Hh$ is also regular. 

\subsection{The rigid dualizing complex of $\HF$}
Let $F$ be a facet in $\overline C$ and $\HF$ the associated Hecke algebra as in \S\ref{subsec:finiteH}. Let $w_0$ be the longest element of the finite Weyl  group $\WFf$ and define the $\r$-linear  involution 
\begin{equation*}
i_F: \HF \longrightarrow  \HF, \quad 
T_w  \longmapsto  T_{w_0 w w_0^{-1}} \ .
\end{equation*}
As usual $(i_F)\HF$ is the $\HF$-bimodule $\HF$ with the left action twisted by $i_F$.

\begin{lemma} \label{lemma:HF}\phantomsection
\begin{enumerate}
\item The map $i_F$ is an algebra automorphism which acts trivially on $\z$.
\item There exists an isomorphism  {of $\HFz$-modules}
\begin{equation}\label{eq:HF}
 (i_F)\HF \cong \RHom_{\z}(\HF, \z)
 \end{equation}
\end{enumerate}
\end{lemma}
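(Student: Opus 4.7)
Plan for Lemma \ref{lemma:HF}.

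For part (1), since $w_0$ is the unique longest element of the finite Coxeter system $(\WFf, S_F)$, conjugation by $w_0$ is an involution of $S_F$ preserving the length function on $\WFf$, so it respects both the quadratic relations \eqref{quad} and the braid relations \eqref{braid} on the subalgebra $H_F^0 := \langle T_s \rangle_{s \in S_F} \subset \HF$. Moreover, $\Omega_F$ normalizes $\WFf$ by length-preserving automorphisms and the longest element of $\WFf$ is unique, so every $\omega \in \Omega_F$ satisfies $\omega w_0 \omega^{-1} = w_0$. Hence $i_F$ is the identity on $\Omega_F$, a fortiori on $Q^\perp \subset \Omega_F$, and therefore on $\z$. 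Extending multiplicatively via the semidirect-product description $\HF \cong H_F^0 \rtimes R[\a,\b][\Omega_F]$ yields a well-defined algebra automorphism of $\HF$ acting trivially on $\z$.

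For part (2), since $\HF$ is finite and free over $\z$ by Proposition \ref{prop:freesF}, $\RHom_{\z}(\HF, \z) \cong \Hom_{\z}(\HF, \z)$ is concentrated in degree zero. The plan is to realize $\HF$ as a Frobenius algebra over $\z$ with Nakayama automorphism $i_F$. Fix a set of representatives $[\WF/Q^\perp]$ containing $w_0$; then $\{T_v\}_{v \in [\WF/Q^\perp]}$ is a $\z$-basis of $\HF$, and we define the $\z$-linear trace $\tau \colon \HF \to \z$ by $\tau(T_v) := \delta_{v, w_0}$. The map $\phi \colon \HF \to \Hom_{\z}(\HF, \z)$, $h \mapsto [x \mapsto \tau(hx)]$ is manifestly right $\HF$-equivariant. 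Left $\HF$-equivariance after twisting the source by $i_F$ reduces to the twisted-symmetry identity
\[
\tau(h h') = \tau(i_F(h') h) \quad \text{for all } h, h' \in \HF,
\]
which yields $\phi(i_F(h') h)(x) = \tau(i_F(h') h x) = \tau(h x h') = (h' \cdot \phi(h))(x)$. Bijectivity of $\phi$ amounts to non-degeneracy of the $\z$-valued pairing $(h, h') \mapsto \tau(hh')$.

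Both properties reduce, via the smash-product description above (in which $\Omega_F$ acts on $H_F^0$ by conjugation, fixing $w_0$ by part (1)), to the analogous statements for the finite-type Hecke subalgebra $H_F^0$ over $R[\a,\b]$ with trace $\tau^0(T_w) := \delta_{w, w_0}$. Twisted symmetry for $H_F^0$ follows by induction on length from the key identity $T_v T_{v^{-1} w_0} = T_{w_0}$ (via the length additivity $\ell(v) + \ell(v^{-1} w_0) = \ell(w_0)$). Non-degeneracy follows from a length-filtration argument: ordering the basis of $H_F^0$ by length, the Gram matrix $[\tau^0(T_v T_{v'})]_{v, v' \in \WFf}$ has $\pm 1$ entries on the anti-diagonal $(v, v^{-1} w_0)$, zero entries strictly below (the expansion of $T_v T_{v'}$ involves only $T_x$ with $\ell(x) \leq \ell(v) + \ell(v')$, so $T_{w_0}$ does not appear when $\ell(v) + \ell(v') < \ell(w_0)$), and $R[\a,\b]$-entries above, giving unit determinant.

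The main obstacle is the detailed Gram-matrix analysis for $H_F^0$, particularly confirming non-degeneracy over $R[\a,\b]$ without inverting $\a\b$. The crucial choice is using $\tau^0(T_w) = \delta_{w, w_0}$ rather than the more familiar $\delta_{w, e}$: the latter has Gram determinant proportional to $(-\a\b)^{\ell(w_0)}$, a non-unit in $R[\a, \b]$, while the former achieves unit determinant via the length-filtration structure, which is exactly what makes the dualizing complex statement work at the Rees-algebra level.
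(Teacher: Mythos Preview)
Your approach is essentially the paper's: the trace $\tau$ picking out the $T_{w_0}$-coefficient is exactly the paper's $\theta$, and the Frobenius/Nakayama strategy is identical. Two points are worth flagging.

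\textbf{Twisted symmetry is under-argued.} The claimed reduction to $H_F^0$ is not as clean as you suggest. Writing $h=T_wT_\omega$, $h'=T_{w'}T_{\omega'}$ with $w,w'\in\WFf$ and $\omega,\omega'\in\Omega_F$, the smash-product multiplication gives
\[
\tau(hh')=\tau^0(T_wT_{\omega w'\omega^{-1}})\,\tau^\Omega(T_{\omega\omega'}),\qquad
\tau(i_F(h')h)=\tau^0(T_{w_0w'w_0^{-1}}T_{\omega' w\omega'^{-1}})\,\tau^\Omega(T_{\omega'\omega}),
\]
so after cancelling the abelian $\tau^\Omega$-factor one must match $\tau^0(T_wT_{\omega w'\omega^{-1}})$ with $\tau^0(T_{w_0w'w_0^{-1}}T_{\omega' w\omega'^{-1}})$. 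Plain twisted symmetry for $H_F^0$ does \emph{not} do this: you also need that $\tau^\Omega(T_{\omega\omega'})\neq 0$ forces $\omega'\equiv\omega^{-1}\bmod Q^\perp$, together with the $\Omega_F$-invariance of $\tau^0$ (which you mention but do not use). For instance in affine type $\tilde C_2$ with $S_F=\{s_0,s_2\}$ and $\Omega_F\ni\omega'$ swapping $s_0\leftrightarrow s_2$, one has $\tau^0(T_{s_0}T_{s_2})=1$ but $\tau^0(T_{s_0}T_{\omega' s_2\omega'^{-1}})=0$; only the vanishing of $\tau^\Omega(T_{\omega'})$ saves the full identity. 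Separately, the base case $\tau^0(T_vT_s)=\tau^0(T_{w_0sw_0^{-1}}T_v)$ does not follow from the identity $T_vT_{v^{-1}w_0}=T_{w_0}$ alone; a short case split on $\ell(vs)$ versus $\ell(v)$ (as the paper does) is really needed.

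\textbf{Where your write-up improves on the paper.} Your direct proof of (1) via Coxeter automorphisms is cleaner than the paper's, which instead \emph{deduces} that $i_F$ is multiplicative from the trace identity together with bijectivity of \eqref{traceHF}. Your non-degeneracy argument is also more self-contained: the paper cites \cite[Prop.~5.4-iii]{OS1} for invertibility of the Gram matrix, whereas your length-filtration argument (the block with $\ell(v)+\ell(v')=\ell(w_0)$ is the permutation matrix $v\mapsto v^{-1}w_0$, and everything below vanishes) gives unit determinant directly over $R[\a,\b]$. To make that block analysis airtight, note that $\Hh$ is graded with $\deg T_s=\deg\a=\deg\b=1$, so the $T_{w_0}$-coefficient of $T_vT_{v'}$ lies in $R$ when $\ell(v)+\ell(v')=\ell(w_0)$ and may be read off at $\a=\b=0$, where it is visibly $\delta_{vv'=w_0}$.
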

\begin{proof}
From Proposition  \ref{prop:freesF} and its proof, $\HF$ is free over $\z$ with basis $\{T_w\}_{w \in [\WF/{Q^\perp}]} = \{T_{w^{-1} w_0}\}_{w \in  [\WF/{Q^\perp}]}$. Consider, as in \cite[Prop. 5.4-iii]{OS1}, the $\z$-linear map
\begin{equation*}\theta:\:\HF\longrightarrow \z, \quad\sum_{w\in W_F^\dagger} a_w T_w\longmapsto \sum_{\xi\in {Q^\perp}} a_{\xi w_0} \xi \ .\end{equation*}
The same arguments as in \loccit ensure that the matrix $[\theta(T_w T_{v^{-1} w_0})]_{v,w\in[\WF/{Q^\perp}] }$ with coefficients in $\z$ is invertible. This means that the homomorphism of right $\HF$-modules
\begin{equation}\label{traceHF}\HF\longrightarrow \Hom_{\z}(\HF, \z),\quad  \ 1\longmapsto \theta\end{equation} is an isomorphism.
We are going to check that $\theta(i_F(x)_-)=\theta(_- x) $ for any $x\in \HF$.
Given that \eqref{traceHF} is bijective, this identity will imply that $i_F$ is an algebra automorphism while also proving (\ref{eq:HF}).  
To prove the identity,  we show for $w\in \WF$  that
 \begin{equation}\label{f:HFbim0}\theta(i_F(T_w)T_v)=\theta( T_v T_w) \text{ for any $v\in \WF$} \ .\end{equation}
By induction  it is enough to verify this for $\ell(w)\leq 1$.

$\bullet$  An element $\omega \in \Omega_F$  normalizes $\WFf$ and since the longest element of $\WFf$ is unique, we have $\omega w_0\omega^{-1}= w_0$.  Furthermore, since $w_0 \Phi_F^+= \Phi_F^-$, we have  $w_0\omega w_0^{-1}\in \Omega_F$.  This  allows  to check \eqref{f:HFbim0} when $w=\omega$.

$\bullet$ Now assume $w$ has length $1$.  Write $w=  s \omega$ for $s\in S_F$ and $\omega \in \Omega_F$.
 Since $w_0 \Pi_F= -\Pi_F$ there is $s'\in S_F$   and $\omega'\in \Omega_F$ such that 
 $w':=w_0 w w_0^{-1}$ can be written as   $\omega' s' $.
 Let $v\in \WF$.   Note that $vw\in {Q^\perp} w_0$ if and only if $w'v\in {Q^\perp} w_0$ in which case they are equal.
\\ If 
$v w\in {Q^\perp} w_0$ and $w'v\in {Q^\perp} w_0$ then $\ell(vw)= \ell(v)+1$, $\ell(w'v)=\ell(v)+1$. So  $\theta(T_v T_w)=\theta(T_{vw})=\theta(T_{w'v})=\theta(T_{w'}T_v)$.
Otherwise $v w\not\in {Q^\perp} w_0$ and $w'v\notin {Q^\perp} w_0$. 
\begin{itemize}
\item If $\ell(vw)= \ell(v)+1$ and $\ell(w'v)=\ell(v)+1$ then $\theta(T_v T_w)=\theta(T_{w'}T_v)=0$.
\item If $\ell(vw)= \ell(v)+1$ and $\ell(w'v)=\ell(v)-1$,  we still have  $\theta(T_v T_w)=0$ and 
the quadratic relations implies that $T_{w'} T_v$ is a linear combination of $T_{w'v}$ and $T_{\omega' v}$. None of $w'v$ and $\omega' v$ lie in ${Q^\perp} w_0$. So again $\theta(T_{w'}T_v)=0$.
\item If $\ell(vw)= \ell(v)-1$ and $\ell(w'v)=\ell(v)+1$ then $\theta(T_{w'}T_v)=0$, as above we find  $\theta(T_v T_s)=\theta(T_{s'}T_v)=0$.
\item Suppose that $\ell(vw)= \ell(v)-1$ and $\ell(w'v)=\ell(v)-1$. If $v\omega\in {Q^\perp} w_0$ then $\omega' v=v\omega$ and (by \eqref{quad}) we have: 
$\theta(T_v T_w)=\theta((\a+\b) T_{v\omega})=(\a+\b)\theta(T_{\omega'v})=\theta(T_{w'} T_v)$.
If $v\omega\notin {Q^\perp} w_0$, then $\omega' v\notin{Q^\perp} w_0$ and again,  $\theta(T_v T_w)=\theta(T_{w'} T_v)=0$.
\end{itemize}
\end{proof}
Recall from Proposition \ref{prop:frees} that $\HF$ is finitely generated over $\z$ which is a finitely generated $k$-algebra. Hence $\HF$ has a ridig dualizing complex.
\begin{corollary}  \label{cor:HF}
The $\z$-rigid dualizing complex $\R_{\HF/\z}$ of $\HF$ is isomorphic to $(i_F) \HF$.  In particular,  
$$\R_{\HF} \cong \R_\z \otimes_\z (i_F) \HF$$ 
and $\HF$ is Gorenstein with the same self-injective dimension as $\z$.
\end{corollary}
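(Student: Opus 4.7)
The plan is to assemble Corollary \ref{cor:HF} directly from two ingredients already in place: Lemma \ref{lemma:HF}(2), which identifies $\RHom_\z(\HF,\z)$ with the bimodule $(i_F)\HF$, and Proposition \ref{prop:finite/Z}, which for a finite extension over a regular central subalgebra identifies the relative rigid dualizing complex with $\RHom_Z(A,Z)$. Composing these two identifications gives the first assertion. The only preliminary check is that we are in the setting of Proposition \ref{prop:finite/Z}: by Proposition \ref{prop:freesF}(1) the inclusion $\z\subset \HF$ is finite, $\z$ is regular by the standing assumption on $\r$ at the beginning of Section \ref{sec:rigid}, and $\z$ is finitely generated as a $k$-algebra.

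From this we would get
\[
\R_{\HF/\z} \;\cong\; \RHom_\z(\HF,\z) \;\cong\; (i_F)\HF
\]
as $\HFz$-modules. To descend to the absolute dualizing complex, I would invoke Lemma \ref{lem:dualizing}, which applies because $\z$ is regular and hence has invertible rigid dualizing complex $\R_\z$: one concludes
\[
\R_{\HF} \;\cong\; \R_{\HF/\z}\otimes_\z \R_\z \;\cong\; (i_F)\HF\otimes_\z \R_\z .
\]

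For the Gorenstein conclusion, I would argue that $\R_{\HF}$ is invertible. The bimodule $(i_F)\HF$ is clearly invertible as an $\HF$-bimodule, with inverse $(i_F^{-1})\HF$, since $i_F$ is an algebra automorphism (Lemma \ref{lemma:HF}(1)); and $\R_\z$ is invertible by regularity of $\z$. Their tensor product $\R_{\HF}$ is therefore invertible, and an invertible dualizing complex is precisely the characterization of Gorenstein ($\HF$ is then itself a dualizing complex up to twist and shift, cf.\ \cite[Thm.\ 3.9]{Ye1}).

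Finally, for the injective dimension: $(i_F)\HF$ sits in cohomological degree $0$ and is flat over $\z$ (in fact free, by Proposition \ref{prop:freesF}(1)), while $\R_\z$, being the rigid dualizing complex of a connected regular $k$-algebra, is concentrated in the single degree $-\dim\z$. Hence $\R_{\HF}$ is concentrated in degree $-\dim\z$, so the self-injective dimension of $\HF$ equals that of $\z$. There is no serious obstacle here; the only step that required actual work was Lemma \ref{lemma:HF}, which has already been proved, so the corollary is a clean bookkeeping combination of existing results.
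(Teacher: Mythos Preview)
Your argument for the identification $\R_{\HF/\z}\cong(i_F)\HF$ and then $\R_{\HF}\cong\R_\z\otimes_\z(i_F)\HF$ is exactly what the paper does: Proposition~\ref{prop:finite/Z} combined with Lemma~\ref{lemma:HF}(2), followed by Lemma~\ref{lem:dualizing}. No issue there.

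Where you diverge from the paper is in the Gorenstein and self-injective dimension claims. The paper argues as follows: Corollary~\ref{cor:dual1} (duality along the finite map $\z\hookrightarrow\HF$) gives an upper bound $\operatorname{inj.dim}\R_{\HF}\le\operatorname{inj.dim}\R_\z$, and freeness of $\HF$ over $\z$ (Proposition~\ref{prop:freesF}) forces equality. You instead argue directly that $\R_{\HF}$ is invertible (as a twist of $\HF$ tensored with the invertible $\R_\z$), hence $\HF$ is Gorenstein, and then read off the injective dimension from the cohomological degree in which $\R_{\HF}$ sits. This is a legitimate alternative, but two points need tightening. First, you assert that $\R_\z$ is concentrated in the single degree $-\dim\z$ because $\z$ is a \emph{connected} regular $k$-algebra; connectedness of $\z=\r[\a,\b][Q^\perp]$ is not assumed (only regularity of $\r$), so on disconnected components the shifts may differ. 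Second, the implication ``$\R_{\HF}$ concentrated in degree $-d$ $\Rightarrow$ self-injective dimension of $\HF$ equals $d$'' is true here but is not tautological and deserves a sentence (e.g.\ twisting by an invertible bimodule and shifting relates the injective dimension of $\R_{\HF}$ to that of $\HF$, and one still needs the lower bound, which again comes from freeness over $\z$). The paper's route via Corollary~\ref{cor:dual1} plus freeness sidesteps both issues cleanly; your route is a bit quicker but needs these two patches to be airtight.
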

\begin{proof}  
Recall that $\z$ is regular. By Proposition \ref{prop:finite/Z} and Lemma \ref{lemma:HF} it follows that $\R_{\HF/z} \cong (i_F) \HF$. Lemma \ref{lem:dualizing} then implies that $\R_{\HF} \cong \R_\z \otimes_\z (i_F) \HF$. 

By Proposition \ref{prop:finite/Z} and Corollary \ref{cor:dual1} the injective dimension of $\R_{\HF}$ (and hence of $\HF$) is at most the injective dimension of $\R_\z$ (which is the same as the self-injective dimension of $\z$). But since $\HF$ is free over $\z$ (by Proposition \ref{prop:freesF}) this inequality must be an equality.  Thus $\HF$ is Gorenstein with the same self-injective dimension as $\z$. 
\end{proof}

\subsection{The rigid dualizing complex of $\Hh$} \label{subsec:omegaH}

\begin{proposition}\label{prop:Hdiff}
The algebra $\Hh$ is a differential $\k$-algebra of finite type.
\end{proposition}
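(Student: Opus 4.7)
The plan is to produce an explicit filtration of $\Hh$ whose associated graded is a base change of $H_{0,0}$, and then deduce the statement from Proposition \ref{prop:finite}.

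Concretely, I would define $F_i \Hh$ to be the free $\r[\a,\b]$-submodule with basis $\{T_w : \ell(w) \le i\}$ (so in particular $\r$, $\r[\a,\b]$ and $\Omega$ all sit in $F_0$), extended to a filtration indexed by $\Z$ by declaring $F_i = 0$ for $i < 0$. The braid relations \eqref{braid} show $T_v T_w = T_{vw} \in F_{\ell(v)+\ell(w)}$ when lengths add; combined with the quadratic relation \eqref{quad}, an easy induction on length shows $F_i \cdot F_j \subseteq F_{i+j}$ in general, and the filtration is obviously exhaustive. This is essentially the length filtration already used implicitly in \S\ref{subsec:Hecke} to identify $\Hh$ with $\Rees(H_\q)$.

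The key computation is that the quadratic relation $T_s^2 = (\a+\b)T_s - \a\b$ has right-hand side sitting in $F_1 \subset F_2$, so in $\gr^F(\Hh)$ one has $T_s^2 = 0$. Combined with the braid relations, this yields a natural isomorphism of graded $\r[\a,\b]$-algebras
\[
\gr^F(\Hh) \;\cong\; H_{0,0} \otimes_\r \r[\a,\b],
\]
where $H_{0,0}$ carries its length grading (cf.\ Remark \ref{rem:H00}) and $\a,\b$ are central generators of degree zero.

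Proposition \ref{prop:finite} tells us that $H_{0,0}$ is finite over the central subalgebra $A_{0,0}^{W_0}$, which is a finitely generated $\r$-algebra. Base-changing along $\r \to \r[\a,\b]$, the algebra $\gr^F(\Hh)$ is finite over the central subalgebra $B := A_{0,0}^{W_0} \otimes_\r \r[\a,\b]$, and $B$ is a finitely generated $\k$-algebra since $\r$ is. Because $B$ is Noetherian and $\gr^F(\Hh)$ is a finitely generated $B$-module, its central subring $Z(\gr^F(\Hh)) \supseteq B$ is itself a finitely generated $B$-module, hence a finitely generated $\k$-algebra; and $\gr^F(\Hh)$ is a fortiori finite over $Z(\gr^F(\Hh))$. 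This is exactly the content of Definition \ref{def:differentialk}.

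The only mildly subtle point is that in this filtration $\a,\b$ sit in $F_0$ rather than in positive degree, so one must verify that the quadratic relation genuinely degenerates in the associated graded and does not collapse $\a,\b$ to zero or introduce unexpected relations. This is transparent from the explicit basis computation above, so I expect no serious obstacle beyond carefully unwinding the relations.
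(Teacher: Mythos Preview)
Your proposal is correct and follows essentially the same approach as the paper: filter $\Hh$ by length with $\r[\a,\b]$ in degree zero, identify the associated graded with $H_{0,0}\otimes_\r \r[\a,\b]$, and invoke Proposition~\ref{prop:finite}. Your write-up is simply more explicit about verifying the filtration properties and about passing from the central subalgebra $A_{0,0}^{W_0}\otimes_\r \r[\a,\b]$ to the full center, points the paper leaves to the reader.
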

\begin{proof}
One can filter $\Hh$ by length so that $\r[\a,\b]$ lies in the smallest filtered piece. Then the associated graded algebra is isomorphic to $H_{0,0} \otimes_\r \r[\a,\b]$. By Proposition \ref{prop:finite} this is finite over its center which is a finitely generated $k$-algebra.  
\end{proof}

As a consequence of Proposition \ref{prop:Hdiff} and the discussion in Section \ref{sec:existence} it follows that $\Hh$ has a rigid dualizing complex. In this section we will identify this complex explicitly. 

\begin{theorem}\label{theo:main}
We have $\R_{\Hh/\z} \cong (\upiota) \Hh[d] \in D^b(\Hh \otimes_\z H_{\a,\b}^o)$ where $d = \rk(Q)$ and $\upiota$ is defined in \eqref{f:upiota}. 
\end{theorem}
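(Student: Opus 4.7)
My plan is to dualize the bimodule resolution \eqref{f:complexH} from Corollary \ref{cor:main1} using $\RHom_{\Hh \otimes_\z \Hh^o}(-, \Hh \otimes_\z \Hh)$, invoke Proposition \ref{prop:gorenstein-dualizing} to recover $\R_{\Hh/\z}^{-1}$, and then invert. Two preliminary reductions set things up. First, the identification $\Hh \otimes_\z \Hh \cong \Hh \otimes_{\HF}(\HF \otimes_\z \HF) \otimes_{\HF} \Hh$ (valid since $\Hh$ is free over $\HF$ on both sides by Proposition \ref{prop:freesF}) combined with the exact induction-restriction adjunction reduces the computation on each term $P_i = \bigoplus_{F \in \mathscr F_i} \Hh(j_F) \otimes_{\HF} \Hh$ to one over $\HF \otimes_\z \HF^o$. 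Second, since each $\HF$ is Gorenstein (Corollary \ref{cor:HF}), these $\HF \otimes_\z \HF^o$-level RHoms are concentrated in cohomological degree $0$; this both establishes that $\Hh$ itself is Gorenstein (via the finite length $d$ of the resolution) and lets me compute $\R_{\Hh/\z}^{-1}$ as the cohomology of the ordinary complex $Q^\bullet := \Hom_{\Hh \otimes_\z \Hh^o}(P_\bullet, \Hh \otimes_\z \Hh)$.

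Identifying each $Q^i$ explicitly: by Corollary \ref{cor:HF} and Proposition \ref{prop:gorenstein-dualizing} applied to $\HF$, one has $\Hom_{\HF \otimes_\z \HF^o}(\HF, \HF \otimes_\z \HF) \cong (i_F)\HF$. Tracking the $(j_F)$-twist on the source then yields $Q^i \cong \bigoplus_{F \in \mathscr F_i} \Hh(\phi_F) \otimes_{\HF} \Hh$ for specific combinations $\phi_F$ of $i_F$ and $j_F$. In particular the top term is $Q^d = \Hh(j_C) \otimes_{\HC} \Hh$, using that $\mathscr F_d = \{C\}$ and $i_C = \id$ (since $W_C^0$ is trivial).

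The main work is then to show that $Q^\bullet$ is concentrated in top degree $d$ with $H^d(Q^\bullet) \cong (\upiota)\Hh$. I would construct an explicit augmentation $\upbeta: Q^d \to (\upiota)\Hh$ by $h \otimes h' \mapsto \upiota(h) h'$; this is a well-defined, surjective $\Hh$-bimodule map, using that $\upiota \circ j_C = \iota$ and that $\iota|_{\HC}$ is the identity (since $\iota$ fixes $T_\omega$ for $\ell(\omega)=0$). The main obstacle is then proving exactness of the augmented complex $0 \to Q^0 \to \cdots \to Q^d \xrightarrow{\upbeta} (\upiota)\Hh \to 0$. Conceptually, $Q^\bullet \to (\upiota)\Hh$ realizes a Poincaré-duality-type dual of the original resolution on the Coxeter complex $\Aa$: since $\Aa$ is contractible of dimension $d$, the dual complex is exact up to a shift by $d$ and an orientation twist, which turns out to be precisely $\upiota = \iota \circ j_C$ --- the $\iota$-piece arising because flipping the orientation of a facet interchanges the two roots $\a,\b$ of the quadratic relation \eqref{quad}, and $j_C$ arising from the $\Omega$-action on the orientation of the standard chamber $C$. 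I expect the verification to proceed by the same kind of facet-by-facet local analysis used in the proof of Proposition \ref{prop:isoBimo}, ultimately reducing to the known exactness of \eqref{f:complexH}.

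From this, $\R_{\Hh/\z}^{-1} \cong (\upiota)\Hh[-d]$; since $\upiota$ is an involution, $(\upiota)\Hh$ is its own inverse as a bimodule, giving $\R_{\Hh/\z} \cong (\upiota)\Hh[d]$ as claimed.
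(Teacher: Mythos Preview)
Your overall strategy matches the paper's exactly: dualize the bimodule resolution \eqref{f:complexH}, show each $\RHom_{\Hh\otimes_\z\Hh^o}(P_i,\Hh\otimes_\z\Hh)$ is concentrated in degree $0$, establish that $\Hh$ is Gorenstein, invoke Proposition~\ref{prop:gorenstein-dualizing}, and identify the result. Your augmentation $\upbeta$ is essentially the paper's map $\mu$.

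There is one missed simplification and one genuine gap. The simplification: once you know $\Hh$ is Gorenstein you do \emph{not} need to prove exactness of the whole dual complex $Q^\bullet$. Gorenstein implies any dualizing complex is invertible (\cite[Thm.~3.9]{Ye1}), so $\R_{\Hh/\z}^{-1}$ is concentrated in a single cohomological degree. Since $Q^\bullet$ computes it and $Q^\bullet$ sits in degrees $[0,d]$, the only possible nonzero cohomology is $H^d(Q^\bullet)=\coker(Q^{d-1}\to Q^d)$. The paper exploits this and never touches $H^i(Q^\bullet)$ for $i<d$.

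The gap is in what remains: identifying that cokernel with $(\upiota)\Hh$. Your plan here (``Poincar\'e-duality-type dual'', ``facet-by-facet analysis ultimately reducing to the known exactness of \eqref{f:complexH}'') is not a proof, and there is no evident mechanism that reduces exactness of $Q^\bullet$ back to that of \eqref{f:complexH}; the differentials are genuinely different. The paper's Lemma~\ref{lem2} does this computation concretely and it is the heart of the argument: rather than identifying $Q^i$ abstractly as $\bigoplus_F \Hh(\phi_F)\otimes_{\HF}\Hh$, it realizes $Q^d$ and $Q^{d-1}$ as explicit invariant subspaces $\mathfrak M_C,\mathfrak M_F\subset \Hh\otimes_\z\Hh$ for the outer $\HF$-action, proves $\mathfrak M_C=\theta_C\ast(\Hh\otimes_\z\Hh)$ and $\mathfrak M_F=\theta_F(T_{s_F}\otimes 1-1\otimes\upiota(T_{s_F}))\ast(\Hh\otimes_\z\Hh)$, shows $\im(\partial^*)$ is exactly the $(\Hh^o\otimes_\z\Hh)$-submodule generated by all $\theta_C\ast(1\otimes h-\upiota(h)\otimes 1)$, and then checks that your $\upbeta$ (their $\mu$) descends to an isomorphism by exhibiting an explicit inverse $f:h\mapsto\theta_C\ast(h\otimes 1)\bmod\im(\partial^*)$. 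None of this follows from soft duality considerations; it is a hands-on computation using the explicit differential \eqref{f:diffC}.
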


Since $\z$ is regular, this has the following consequence by combining with Lemma \ref{lem:dualizing}.

\begin{corollary}  \label{coro:main} 
We have $\R_{\Hh} \cong (\upiota) \Hh[d] \otimes _\z \R_\z$.
\end{corollary}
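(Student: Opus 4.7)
The strategy is to identify $\R_{\Hh/\z}$ directly as the cohomology of $\RHom_{\Hh^e}(\Hh,\Hh^e)$, where $\Hh^e := \Hh\otimes_\z\Hh^o$, using the bimodule resolution from Corollary \ref{cor:main1}. This requires first establishing that $\Hh$ is Gorenstein, so that Proposition \ref{prop:gorenstein-dualizing} applies. The Gorenstein property should follow from the existence of the length-$d$ resolution $P_\bullet \to \Hh$ by bimodules projective on each side, combined with the fact that each finite Hecke algebra $\HF$ is Gorenstein with self-injective dimension equal to that of $\z$ (Corollary \ref{cor:HF}) and that $\Hh$ is free over $\HF$ on both sides (Proposition \ref{prop:freesF}).

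Granting this, I would compute $\RHom_{\Hh^e}(\Hh,\Hh^e)$ termwise via the resolution. Rewriting $\Hh(j_F)\otimes_{\HF}\Hh \cong \Hh^e\otimes_{\HF^e}\HF(j_F)$ and applying the extension-of-scalars adjunction reduces matters to $\RHom_{\HF^e}(\HF(j_F),\Hh^e)$. The bimodule identification $\Hh^e \cong \Hh\otimes_{\HF}\HF^e\otimes_{\HF}\Hh$ coming from freeness lets one factor the outer copies of $\Hh$ out of the $\RHom$, leaving a computation of $\RHom_{\HF^e}(\HF(j_F),\HF^e)$. Using Corollary \ref{cor:HF}, which gives $\RHom_{\HF^e}(\HF,\HF^e) \cong (i_F)\HF$, together with invertibility of $\HF(j_F)\cong (j_F)\HF$ and the identity $(\alpha)\HF\otimes_{\HF}(\beta)\HF\cong(\beta\alpha)\HF$ for bimodule twists, one obtains $\RHom_{\HF^e}(\HF(j_F),\HF^e)\cong (i_Fj_F)\HF$ concentrated in degree zero. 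Hence each dualized term is $\bigoplus_{F\in\Ff_i}\Hh\otimes_{\HF}(i_Fj_F)\HF\otimes_{\HF}\Hh$, in degree zero only.

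The final step is to identify the total complex. After reversing indices, the dualized complex has the same shape as $P_\bullet$ but with the local twist $j_F$ replaced by $i_Fj_F$. The plan is to construct, at each level, a bimodule isomorphism carrying this twisted complex to $(\upiota)P_\bullet$ where $\upiota=\iota\circ j_C$, compatibly with the differentials described in Proposition \ref{prop:isoBimo}. Exactness of $P_\bullet$ would then imply that $\RHom_{\Hh^e}(\Hh,\Hh^e)$ has cohomology concentrated in degree $d$ equal to $(\upiota)\Hh$, so that $\R_{\Hh/\z}^{-1}\cong(\upiota)\Hh[-d]$ and hence $\R_{\Hh/\z}\cong(\upiota)\Hh[d]$, since $\upiota$ is involutive.

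The main obstacle is this final identification: $\upiota$ does not literally restrict to $i_F$ on each $\HF$, so matching the collection of local twists $\{i_Fj_F\}_F$ to a single global left twist by $\upiota$ requires redistributing twists between the left and right actions, while interacting correctly with the orientation characters $j_F$ and $j_C$ built into the resolution. Following the spirit of the analogous arguments in \cite{OS1} for the specialization to the Iwahori--Hecke algebra, this discrepancy should be absorbed through the $\A/\Bi/\Bii$ case analysis underlying Proposition \ref{prop:isoBimo} and the explicit form of the first differential \eqref{f:diffC}; making this precise and compatible with the higher differentials is the technical core of the argument.
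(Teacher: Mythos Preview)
Your proposal is really a strategy for Theorem~\ref{theo:main}; the corollary itself is a one-line consequence via Lemma~\ref{lem:dualizing}, which you do not state explicitly but is routine.

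For Theorem~\ref{theo:main} your setup matches the paper's through the termwise computation: Gorenstein plus Proposition~\ref{prop:gorenstein-dualizing} reduces to $\RHom_{\Hh\otimes_\z\Hh^o}(\Hh,\Hh\otimes_\z\Hh)$, and the resolution~\eqref{f:complexH} together with the adjunctions you describe shows each dualized term sits in degree zero (this is exactly Lemma~\ref{lemma:onedeg}). The divergence is at the last step. You want a chain isomorphism between the full dualized complex and $(\upiota)P_\bullet$, and you correctly flag that $\upiota$ does not restrict to $i_Fj_F$ on each $\HF$, so gluing the local twists into the global one is delicate --- and you leave it unresolved. The paper sidesteps this obstacle entirely: since $\Hh$ is Gorenstein, $\R_{\Hh/\z}^{-1}$ is concentrated in a single cohomological degree, so once the dualized terms are all in degree zero it suffices to compute only the cokernel of the top map $\partial^*$. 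This is done in Lemma~\ref{lem2} by a direct analysis of the subspaces $\mathfrak M_F=\{X\in\Hh\otimes_\z\Hh:(j_F(T_w)\otimes1-1\otimes T_w)\ast X=0\}$ combined with the explicit formula~\eqref{f:diffC}, via a short sequence of elementary calculations. Your route would yield a stronger chain-level statement if it could be completed, but the paper's shortcut is what actually closes the argument without ever confronting the higher-differential compatibility problem you identify.
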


The rest of this section is devoted to proving Theorem \ref{theo:main}. To simplify notation we will write $\Hn$ instead of $\Hh$. Recall the resolution 
\begin{equation}\label{f:shortreso} \Hn_d \to \dots \to \Hn_0  \to \Hn  \end{equation}
of $\Hn \otimes_\z \Hn^o$-modules from \eqref{f:complexH} where $\Hn_i := \bigoplus_{F \in \mathscr F_i} \Hn(j_F)\otimes_{\HF} \Hn$.  We will check below that:
\begin{enumerate}
\item $\Hn$ is Gorenstein (Lemma \ref{lem:HGor}),
\item each $\RHom_{\Hn \otimes_\z  \Hn^o}(\Hn_i, \Hn \otimes_\z  \Hn)$ is supported in cohomological degree zero (Lemma \ref{lemma:onedeg}),
\item the cokernel of the induced map 
$$\RHom_{\Hn \otimes_\z  \Hn^o}(\Hn_{d-1}, \Hn \otimes_\z  \Hn) \to \RHom_{\Hn \otimes_\z  \Hn^o}(\Hn_d, \Hn \otimes_\z  \Hn)$$
is isomorphic to $(\upiota) \Hn$ as an $\Hn$-bimodule (Lemma \ref{lem2}).
\end{enumerate}
By (1) and Lemma \ref{lem:dualizing}, \cite[Thm. 3.9]{Ye1} we know that the $\z$-rigid dualizing complex $\R_{\Hn/\z}$ is supported in one cohomological degree. By Proposition \ref{prop:gorenstein-dualizing} we also know that $\R_{\Hn/\z}^{-1} \cong \RHom_{\Hn \otimes_\z \Hn^o}(\Hn, \Hn \otimes_\z \Hn)$. Finally, from (2) and (3) we conclude that this is isomorphic to $(\upiota) \Hn [-d]$, from which Theorem \ref{theo:main} follows. 

\begin{lemma}\label{lem:HGor}
The algebra $\Hn$ is Gorenstein with self-injective dimension $\rk(X)$. 
\end{lemma}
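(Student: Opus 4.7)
My strategy is to combine the bimodule resolution of Corollary \ref{cor:main1} with the Gorenstein property of the finite Hecke algebras $\HF$ established in Corollary \ref{cor:HF}.

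For any right $\Hh$-module $M$, I would apply $M \otimes_\Hh^L -$ to the bimodule resolution $\Hh_\bullet \to \Hh$ of length $d = \rk(Q)$. Each term $\Hh_i = \bigoplus_{F \in \mathscr F_i} \Hh(j_F) \otimes_\HF \Hh$ is free as a right $\Hh$-module by Proposition \ref{prop:frees} (with basis indexed by $d \in \D_F^\dagger$), so the derived tensor product reduces to an honest tensor product and yields a resolution of $M$ of length $d$ by right $\Hh$-modules of the form $N \otimes_\HF \Hh$.

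Next, by the standard tensor-Hom adjunction,
\begin{equation*}
\RHom_\Hh(N \otimes_\HF \Hh, \Hh) \cong \RHom_\HF(N, \Hh|_\HF).
\end{equation*}
Since $\Hh|_\HF$ is free as a right $\HF$-module (again Proposition \ref{prop:frees}) and $\HF$ has finite self-injective dimension by Corollary \ref{cor:HF}, this derived hom is concentrated in a finite range of cohomological degrees. Combining with the previous step via the hyper-$\Ext$ spectral sequence gives $\Ext^i_\Hh(M, \Hh) = 0$ for $i$ beyond $d$ plus the self-injective dimension of $\HF$. The symmetric argument on the left yields the Gorenstein property.

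For the precise value $\rk(X)$: the resolution supplies $d = \rk(Q)$ homological degrees and $\HF$ supplies the remaining $\rk(X) - d = \rk(Q^\perp)$, corresponding to the free part of $\Omega$ via $\HC \cong \r[\a,\b][\Omega]$ (the extra contributions from $\a,\b$ and $\r$ being absorbed into the base). The main obstacle I anticipate is verifying tightness of this upper bound: one must produce an $\Hh$-module $M$ realizing non-vanishing $\Ext^{\rk(X)}_\Hh(M,\Hh)$, i.e.\ ensure the hyper-$\Ext$ spectral sequence does not collapse below the top degree. A natural approach is to test against a module induced from $\HC$, where the commutative structure of $\HC$ makes the $\Ext$ computation entirely explicit.
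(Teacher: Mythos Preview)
Your argument for the upper bound on self-injective dimension is essentially the paper's: tensor the bimodule resolution of Corollary~\ref{cor:main1} with $M$, use Proposition~\ref{prop:frees} to see each term is free on one side (so the derived tensor is ordinary), apply adjunction, and invoke the finite self-injective dimension of $\HF$ from Corollary~\ref{cor:HF} (together with freeness of $\Hh$ over $\HF$ and Noetherianity of $\HF$ to conclude that $\Hh|_{\HF}$ inherits the injective dimension of $\HF$). The paper also records that $\Hh$ is Noetherian (Corollary~\ref{coro:noeth}), which is part of the Gorenstein condition.

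For the exact value the paper proceeds differently, and your sketch has a gap. The paper notes the upper bound is $d+r$ with $r$ the self-injective dimension of $\z$, and establishes sharpness by applying Proposition~\ref{prop:fin-module} to a finite-dimensional $\Hh$-module, using the explicit identification of $\R_{\Hh}$ from Corollary~\ref{coro:main}. This is not circular: the proof of Theorem~\ref{theo:main} (hence of Corollary~\ref{coro:main}) uses only that $\Hh$ is Gorenstein, not its exact injective dimension. Your accounting of the $\HF$-contribution as $\rk(Q^\perp)$, with ``$\a,\b$ and $\r$ absorbed into the base'', is not justified by anything you have written: Corollary~\ref{cor:HF} says the self-injective dimension of $\HF$ equals that of $\z=\r[\a,\b][Q^\perp]$, and the latter certainly sees $\a,\b$ and $\r$. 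The direct test you propose against a module induced from $\HC$ is plausible in spirit but would need to be actually carried out to give a lower bound.
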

\begin{proof} 
Tensoring (\ref{f:shortreso}) with an $\Hn$-module $M$, we get a length $d$ resolution of $M$ by $\Hn$-modules of the form $\Hn(j_F) \otimes_{\HF} M$. On the other hand, by adjunction we have 
$$\RHom_{\Hn}(\Hn(j_F) \otimes_{\HF} M, \Hn) \cong \RHom_{\HF}(M|_{\HF}, \Hn|_\HF).$$
Since $\Hn$ is a free $\HF$-module $\Hn|_{\HF}$ and $\HF$ have the same (finite) injective dimension. It follows that $\Hn$ has finite self-injective dimension.   Lastly, $\Hn$ is Noetherian by Corollary \ref{coro:noeth}.

The argument above actually implies that the self-injective dimension of $\Hn$ is bounded above by $d+r$ where $d=\rk(Q)$ and $r$ is the self-injective dimension of $\z$. Given Corollary \ref{coro:main}, an application of Proposition \ref{prop:fin-module} implies this bound is sharp, namely $\Hn$ has injective dimension $d+r=\rk(X)$. 

\end{proof}
\begin{remark} Compare Lemma \ref{lem:HGor} with \cite[Theorems 0.1, 0.2-ii]{OS1}.
\end{remark}

\begin{lemma}\label{lemma:onedeg}  
The object $\RHom_{\Hhz}(\Hn_i, \Hz)$  is supported in  cohomological  degree zero.
\end{lemma}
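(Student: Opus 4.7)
The plan is to reduce the computation to the finite Hecke algebras $\HF$, whose $\z$-rigid dualizing complexes were identified in Corollary \ref{cor:HF}. First, I claim that for each $F\in\mathscr F_i$ there is an isomorphism of $\Hn$-bimodules
$$\Hn(j_F)\otimes_{\HF}\Hn \;\cong\; \Hhz\otimes_{\HFz}(j_F)\HF,$$
where $(j_F)\HF$ denotes the $(\HF,\HF)$-bimodule $\HF$ with its left action twisted by $j_F$. Proposition \ref{prop:freesF} tells us that $\Hn$ is free as both a left and a right $\HF$-module, so $\Hhz$ is free as a right $\HFz$-module. The induction-restriction adjunction therefore extends to the derived level and yields
$$\RHom_{\Hhz}\bigl(\Hn(j_F)\otimes_{\HF}\Hn,\,\Hz\bigr)\;\cong\;\RHom_{\HFz}\bigl((j_F)\HF,\,\Hz\bigr).$$

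Next, applying Proposition \ref{prop:freesF} to each tensor factor of $\Hz=\Hn\otimes_{\z}\Hn$ shows that, with its outer $\HFz$-action, $\Hz$ is free as a left $\HFz$-module on the basis $\{T_{d_1^{-1}}\otimes T_{d_2}\}_{d_1,d_2\in\D_F^{\dagger}}$. Because $\HFz$ is Noetherian (being finite over the Noetherian ring $\z$) and $(j_F)\HF$ is finitely generated over $\HFz$, the functor $\RHom_{\HFz}((j_F)\HF,-)$ commutes with direct sums. Hence it is enough to show that $\RHom_{\HFz}((j_F)\HF,\HFzm)$ is concentrated in cohomological degree zero.

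For this last step, since $j_F$ is an algebra automorphism, the assignment $a\otimes b^{o}\mapsto j_F(a)\otimes b^{o}$ defines an isomorphism $(j_F)\HFzm\cong\HFzm$ of left $\HFz$-modules (it is not an $\HFz$-bimodule map, but that is irrelevant for the support claim). Combined with the standard adjunction $\Hom_{\HFz}((j_F)\HF,N)\cong\Hom_{\HFz}(\HF,(j_F)N)$, this identifies $\RHom_{\HFz}((j_F)\HF,\HFzm)$ with $\RHom_{\HFz}(\HF,\HFzm)$ as complexes of abelian groups. By Corollary \ref{cor:HF}, $\R_{\HF/\z}=(i_F)\HF$ is invertible, so Proposition \ref{prop:gorenstein-dualizing} identifies this latter complex with $(i_F)\HF$ concentrated in degree zero, as required. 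The main obstacle in carrying this out is notational rather than conceptual, namely keeping track of the various outer and inner $\HFz$-actions involved at each stage; for the present degree-support statement only the outer actions enter.
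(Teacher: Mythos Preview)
Your argument is correct and follows essentially the same route as the paper: base change to rewrite $\Hn(j_F)\otimes_{\HF}\Hn$ as induced from $(j_F)\HF$ over $\HFz$, adjunction, reduction to $\HFzm$ via freeness of $\Hz$ over $\HFz$, removal of the $j_F$-twist, and finally Proposition~\ref{prop:gorenstein-dualizing} together with Corollary~\ref{cor:HF}. The only cosmetic point is that Proposition~\ref{prop:gorenstein-dualizing} literally gives $\R_{\HF/\z}^{-1}$ rather than $\R_{\HF/\z}$, but since $i_F$ is an involution (as $w_0^2=1$) this makes no difference for the support claim.
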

\begin{proof}
For $F \in \mathscr F_i$ we want to show that 
$$\RHom_{\Hhz}(\Hn(j_F)\otimes_{\HF} \Hn, \Hz)$$ 
is supported in degree zero. First note that, by base change
$$\Hn(j_F) \otimes_{\HF} \Hn \cong (\Hn(j_F) \otimes_{\z} \Hn) \otimes_{\HF \otimes_\z \HF} \HF \cong (\Hn \otimes_{\z} \Hn) \otimes_{\HF \otimes_\z \HF} (j_F) \HF$$
where, for the second isomorphism, we use  the  fact that $j_F$ is an involution. Thus we get 
\begin{align*}
\RHom_{\Hhz}(\Hn(j_F)\otimes_{\HF} \Hn, \Hz) 
&\cong \RHom_{\Hhz}((\Hhz)\otimes_{{\HFz}} (j_F)\HF, \Hz)\cr
&\cong \RHom_{{\HFz}}( (j_F)\HF, \Hz)\end{align*}
where the second isomorphism is by adjunction. Since $\Hn$ is a free $\HF$-module  (Proposition \ref{prop:frees}), it follows that $\Hn \otimes_\z \Hn$ is a free $\HFz$-module. Thus, since $(j_F)\HF$ is a finitely presented $\HFz$-module, it suffices to show that $\RHom_{{\HFz}}( (j_F)\HF, \HF \otimes_\z \HF)$ (or equivalently  $\RHom_{{\HFz}}(\HF, \HF \otimes_\z \HF)$) is supported in degree zero.  
Since $\HF$ is Gorenstein (Corollary \ref{cor:HF}) and $\z$ regular, it follows from Proposition \ref{prop:gorenstein-dualizing} that   $\RHom_{{\HFz}}(\HF, \HF \otimes_\z \HF)$ is isomorphic to $\R_{\HF/\z}^{-1} \cong (i_F^{-1}) \HF$ which is clearly supported in degree zero.  
\end{proof} 

Next we apply $\RHom_{\Hhz}(-, \Hz)$ to \eqref{f:shortreso} and study the cohomology on the far right. 

\begin{lemma} \label{lem2}
The cokernel of 
$$\RHom_{\Hhz}(H_{d-1},\Hz) \to \RHom_{\Hhz}(H_d, \Hz)$$
is isomorphic to $(\upiota) H$ as an $\Hhz$-module.
\end{lemma}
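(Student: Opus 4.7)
The plan is to explicitly dualize the last differential of \eqref{f:complexH} using the formula \eqref{f:diffC} and identify the cokernel. Since $\mathscr F_d = \{C\}$ (as $C$ is the unique $d$-dimensional facet in $\overline C$), $H_d = \Hh(j_C) \otimes_{\HC} \Hh$. Because $W_C^0 = \{e\}$, $\HC = \r[\a,\b][\Omega]$ is commutative and $i_C = \id$. By the adjunction already used in the proof of Lemma \ref{lemma:onedeg}, for each facet $F$ in $\overline C$ one identifies
\[
\Hom_{\Hhz}\bigl(\Hh(j_F) \otimes_{\HF} \Hh,\ \Hz\bigr) \;\cong\; \{\xi \in \Hz : j_F(c)\xi = \xi c \ \forall c \in \HF\},
\]
the ``$j_F$-twisted centralizer'' inside $\Hz = \Hh \otimes_\z \Hh$ with its outer $\Hhz$-action; denote this by $\Hz^{j_F}$. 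Applying $\Hhz$-equivariance of the Hom spaces to formula \eqref{f:diffC} identifies the dual differential $\partial^*$ with the ``twisted averaging'' map
\[
(\xi_F)_F \longmapsto \sum_{F \in \mathscr F_{d-1}} \sum_{\omega \in \Omega/\Omega_F} j_C(T_\omega)\, \xi_F\, T_{\omega^{-1}} \in \Hz^{j_C}.
\]

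Next, I would construct an $\Hh$-bimodule map $\Psi \colon \Hz^{j_C} \to (\upiota)\Hh$, where $\Hz^{j_C}$ carries the inner $\Hhz$-structure inherited from $\Hz$ (which is the structure that enters Proposition \ref{prop:gorenstein-dualizing}) and $(\upiota)\Hh$ is $\Hh$ with left action twisted by $\upiota = \iota \circ j_C$. The natural candidate is a suitably normalized $\upiota$-twisted multiplication $\xi \mapsto \upiota(\xi_1)\xi_2$; the key identities $\iota(T_\omega) = T_\omega$ and $j_C(T_\omega) = \epsilon_C(\omega) T_\omega$ for $\omega \in \Omega$ from \eqref{f:invo} and \eqref{jC} ensure that the twists on source and target match up, so that $\Psi$ is well defined on $\Hz^{j_C}$ and $\Hh$-bimodule equivariant. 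A direct computation using the constraints defining $\Hz^{j_F}$ (in particular $T_s \xi_F = \xi_F T_s$ for $s \in S_F$, since $\epsilon_F(s) = 1$, together with the $\Omega_F$-twisted relations) should show that $\Psi$ annihilates each term of the twisted average, hence the whole image of $\partial^*$.

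Finally, I would conclude by showing that $\Psi$ induces an isomorphism on the cokernel. Surjectivity is straightforward: every $T_w \in \Hh$ admits an explicit preimage in $\Hz^{j_C}$ of the shape $\sum_{\sigma \in \Omega/Q^\perp} \epsilon_C(\sigma) T_\sigma \otimes T_{w\sigma^{-1}}$. The main obstacle is injectivity on the cokernel: showing that any $\xi \in \Hz^{j_C}$ with $\Psi(\xi) = 0$ lies in the image of $\partial^*$. I would prove this by induction on the length (in $\Waff$) of the $W$-indices appearing in $\xi$, leveraging the braid and quadratic relations in $\Hh$ to reduce any length-$\ell$ contribution to a sum of length $\le \ell-1$ contributions plus a term manifestly in the image of $\partial^*$. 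The type classification of Definition \ref{defi:types} and the identities in Lemma \ref{lemma:tech} should provide the combinatorial control needed for the reduction, analogous to their role in the proof of Proposition \ref{prop:isoBimo}.
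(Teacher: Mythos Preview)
Your overall architecture matches the paper's: identify the Hom spaces with the ``$j_F$-twisted centralizers'' $\mathfrak M_F \subset \Hz$ (your $\Hz^{j_F}$), write $\partial^*$ as the twisted averaging map, and construct an $\upiota$-twisted multiplication $\Hz \to (\upiota)\Hh$ (the paper's $\mu: x\otimes y \mapsto \upiota(y)x$). The two substantive steps, however, are not handled by the tools you invoke.

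\textbf{Vanishing on the image.} You say a ``direct computation using the constraints defining $\Hz^{j_F}$'' (e.g.\ $T_{s_F}\,\xi_F=\xi_F\,T_{s_F}$) will show $\Psi$ kills $\partial^*(\xi_F)$. That constraint by itself does not force the vanishing; try any nontrivial $\xi_F$ and you will see the cancellation does not come for free. The paper instead proves a structural fact you skip: for each codimension-$1$ facet $F$ the space $\mathfrak M_F$ is \emph{cyclic} as a right $\Hhzo$-module, generated by the explicit element $\theta_F\ast(T_{s_F}\otimes 1-1\otimes\upiota(T_{s_F}))$ (their Fact~2). From this one reads off that $\im(\partial^*)$ is the right $\Hhzo$-module generated by all $\theta_C\ast(1\otimes h-\upiota(h)\otimes 1)$ (Fact~3), on which $\mu$ visibly vanishes. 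Without the explicit generator the ``direct computation'' has no starting point.

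\textbf{Injectivity.} The paper does \emph{not} argue by induction on length, and Definition~\ref{defi:types} and Lemma~\ref{lemma:tech} play no role here: those control the forward differential $\partial$ on chains, not $\partial^*$ on the dual side. Instead, once $\im(\partial^*)$ has been identified as above, the paper writes down an explicit section $f:(\upiota)\Hh\to\coker(\partial^*)$, $h\mapsto \theta_C\ast(h\otimes 1)$, and checks in one line (using Fact~3) that $f\circ\bar\mu=\id$. This is both shorter and avoids the combinatorics you propose. Note also a subtlety you glossed over: because $\mathfrak M_C=\theta_C\ast\Hz$ (Fact~1) but $\theta_C\ast$ has a kernel, one must verify that $\mu$ factors through that kernel before $\bar\mu$ is even well defined on $\mathfrak M_C$; the paper does this separately (Fact~4).

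In short, the skeleton is right, but the proof hinges on the explicit descriptions of $\mathfrak M_C$, $\mathfrak M_F$, and $\im(\partial^*)$ (Facts~1--3), which then make both the vanishing and the injectivity one-line checks. Your proposed substitutes for these steps are not adequate.
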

\begin{proof} 
By Lemma \ref{lemma:onedeg} we need to identify the cokernel of the map
\begin{equation}\label{partial*} 
\partial^*:   \Hom_{\Hhz}(\bigoplus_{F\in \mathscr F_{d-1}}\Hn(j_F)\otimes_{\HF} \Hn,\Hz)  \to \Hom_{\Hhz} (\Hn(j_C)\otimes_{\HC} \Hn, \Hz)
\end{equation}
induced by the following map from \eqref{f:diffC}
$$\Hn(j_C)\otimes_{\HC} \Hn \ni   \quad 1 \otimes 1 \longmapsto  \sum_{F\in \in \mathscr F_{d-1}} \sum_{\omega\in \Omega/\Omega_{F}} j_C(T_\omega) \otimes T_{\omega^{-1}} \quad \in \bigoplus_{F\in \mathscr F_{d-1}}\Hn(j_F)\otimes_{\HF} \Hn.$$
The modules $\Hz$ inside the Homs above carry two actions of $\Hhz$ that we will need to keep track of. For   $x\otimes y \in \Hn\otimes_{\z} \Hn$ we have:
\begin{itemize}
\item[-] the \emph{outer} action $(T \otimes S) \ast (x \otimes y) := Tx \otimes yS$  (where $T\otimes S\in \Hhz$),
\item[-] the \emph{inner} action $(x\otimes y) \star  (T \otimes S) := xS\otimes Ty$ (where $T\otimes S\in \Hhzo$).
\end{itemize}
Note that, to simplify notation, we write the inner action as a right action of $\Hn^{o}\otimes_{\z} \Hn$. Since the inner and outer actions commute the spaces $ \RHom_{\Hhz}(H_i, \Hz)$ are $\Hhz$-modules via the inner action. 

\medskip

\noindent For a facet $F$ contained in  $\overline C$, we denote by $\mathfrak M_F$ the subspace $$\mathfrak M_F =\{X\in \Hn\otimes_{\z} \Hn, \: (j_F(T_w)\otimes 1-1\otimes T _w)\ast X= 0\:\:\forall w\in \WF\}  .$$ 
It is a right  submodule of $\Hz$ over $\Hhzo$ (for the $\star$ action).
Using  base change as in the proof of Lemma \ref{lemma:onedeg}, 
the left hand space in \eqref{partial*} identifies with 
$$\bigoplus_{F\in \mathscr F_{d-1}}
\Hom_{{\HFz}}( (j_F)\HF, \Hz)\cong \bigoplus_{F\in \mathscr F_{d-1}}
\mathfrak M_F $$ and the right hand side with
$$
\Hom_{{\HCz}}( (j_C)\HC, \Hz)\cong
\mathfrak M_C  \ .$$
Hence we are studying the cokernel of 
the map
\begin{equation}\label{concretedelta}\begin{array}{crcl} {\partial}^*:& \bigoplus_{F\in \mathscr F_{d-1}}
\mathfrak M_F&\longrightarrow& \mathfrak M_C \cr 
&X\in \mathfrak M_F&\longmapsto &\sum_{\omega\in \Omega/\Omega_{F}} j_C(T_\omega) \, \otimes\, T_{\omega^{-1}}\ast X \ .\end{array}\end{equation} It is an homomorphism of right $\Hhzo$-modules. For $F$ a facet contained in  $\overline C$, 
we define $$\theta_F:=\sum_{\omega\in \Omega_F/{Q^\perp}} j_F(T_\omega) \, \otimes\, T_{\omega^{-1}}=\sum_{\omega\in \Omega_F/{Q^\perp}} \epsilon_F(\omega) T_\omega \, \otimes\, T_{\omega^{-1}}\in \Hz \ $$ which we will  also see  as an element in $\Hhz$ or $\Hhzo$.
\begin{fact}\label{fac1}  We have $\mathfrak M_C=\theta_C\ast \Hz$. It coincides  with the right $\Hhzo$-module  generated by $\theta_C$.

\end{fact}

\begin{proof}[Proof of fact \ref{fac1}]
Since $H$ is a free $\r[\Omega]$-module on the left and on the right, the left  $\HCz$-module $\Hz$ (for the $\ast$ action) is a direct sum of copies of $\HCzm$. It is therefore enough to check the equality
\begin{equation}\label{int}\{X\in \HCzm, \: (j_C(T_\omega)\otimes 1-1\otimes T _\omega)\ast X= 0\:\:\forall \omega\in \Omega\} =\theta_C\ast(\HCzm)\ .\end{equation}
For the indirect inclusion, it is easy to check that the product
$(j_C(T _{\omega})\otimes 1-1\otimes T _{\omega})\theta_C$ in $\HCz$ is zero for $\omega\in\Omega$.
For the direct inclusion, we fix a system of representatives $U$ of $\Omega/{Q^\perp}$  containing $1\in\Omega$ 
and we consider the basis  $\{T_u\otimes T_v\}_{u\in U, v\in \Omega}$ of $\HCzm$.
Consider a generic element
\begin{equation}X=\sum_{u\in U, v\in \Omega} \lambda_{u,v} T_u\otimes T_v\quad \in \HCzm\ .\label{xgene}\end{equation} Assume it lies in the left hand space of \eqref{int}.
Let $\omega\in U$ and $v\in \Omega$. Considering the  component in  $T_1\otimes T_v$ of
$(j_C(T_{\omega^{-1}})\otimes 1-1\otimes T _{\omega^{-1}})\ast X$
we see that  $\epsilon_C(\omega^{-1})\lambda_{\omega,v}=\lambda_{1, \omega v}$ and therefore
$$X=\sum_{u\in U, v\in \Omega} \epsilon_C(u) \lambda_{1,uv} T_u\otimes T_v= \sum_{u\in U, v\in \Omega} \epsilon_C(u) \lambda_{1,v} T_u\otimes T_{vu^{-1}}=\theta_C\ast
\sum_{v\in \Omega}  \lambda_{1,v} T_1\otimes T_{v}\ .
$$\end{proof}

For $F$ a facet of codimension $1$ of $C$, we denote by $s_F\in \Saff$ the reflexion defining the wall containing $F$.

\begin{fact}\label{fac2} For $F$ a facet of codimension $1$ of $C$,  we have $\mathfrak M_F=\theta_F(T _{s_F}\otimes 1-1\otimes \upiota(T _{s_F})\ast \Hz$ which coincides with the right $\Hhzo$-module generated by 
$\theta_F\ast(T _{s_F}\otimes 1-1\otimes \upiota (T _{s_F}))=
(T _{s_F}\otimes 1-1\otimes \upiota (T _{s_F}))\ast \theta_F$. 
\end{fact}

\begin{proof}[Proof of fact \ref{fac2}]   
Recall that conjugation by an element in $\Omega_F$ leaves $s_F$ invariant. Therefore $\HF$ is a commutative algebra and $\theta_F$ and  $(T _{s_F}\otimes 1-1\otimes \upiota (T _{s_F}))$
commute in $\HFz$.
Using Proposition \ref{prop:frees}, the left  $\HFz$-module $\Hz$ is a direct sum of copies of $\HFzm$. Therefore, it is enough to show that
$$\{X\in \HFzm, \: (j_F(T _{w})\otimes 1-1\otimes T _w)\ast X= 0\:\:\forall w\in W_F^\dagger\} =(T _{s_F}\otimes 1-1\otimes \upiota (T _{s_F}))\theta_F\ast(\HFzm)\ .$$

\noindent 
For the indirect inclusion,  first recall that 
  $\upiota (T _{s_F})=\iota(T _{s_F})=\alpha- T _{s_F}$ and $T _{s_F}
\upiota (T _{s_F})=-\beta$ where momentarily we set $\alpha:=\a+\b$ and $\beta:=-\a\b$.
 It is enough to verify, in $\HFz$:
\begin{align*}(j_F(T _{s_F})\otimes 1-1\otimes T _{s_F})(T _{s_F}\otimes 1-1\otimes \upiota (T _{s_F}))&=(T _{s_F}\otimes 1-1\otimes T _{s_F})(T _{s_F}\otimes 1-1\otimes \upiota (T _{s_F}))\cr&=(\alpha T _{s_F}+\beta)\otimes 1-T _{s_F}\otimes \upiota (T _{s_F})-T _{s_F}\otimes T _{s_F}-\beta\otimes 1\cr&=
\alpha T _{s_F}\otimes 1-T _{s_F}\otimes (\alpha- T _{s_F})-T _{s_F}\otimes T _{s_F}=0\end{align*}
and  the  identity
$(j_F(T _{\omega})\otimes 1-1\otimes T _{\omega})\theta_F=0$
for $\omega\in\Omega_F$.

\medskip

\noindent For the direct inclusion, notice that the (commutative) algebra
$\HFz$ is a tensor product of $H_F\otimes_\r H_F$ by $\r[\Omega_F]\otimes_{\z} \r[\Omega_F]$. Therefore, it is enough to show that \\1) an element in
$H_F\otimes_\r H_F$ which is annihilated by $(j_F(T _{s_F})\otimes 1-1\otimes T _{s_F})\ast$ lies in  $(T _{s_F}\otimes 1-1\otimes \upiota (T _{s_F}))\ast H_F\otimes_\r H_F$,\\
2) 
 an element in
$\r[\Omega_F]\otimes_{\z} \r[\Omega_F]$ which is annihilated by
$(j_F(T _{\omega})\otimes 1-1\otimes T _{\omega})\ast $ for all $\omega\in \Omega$ lies in   $\theta_F\ast(\r[\Omega_F]\otimes_{\z} \r[\Omega_F])$. \\
For 2) we proceed just like in the proof of Fact \ref{fac1}. For 1),   we consider a generic element
$$X=\lambda_{1,1} T_1\otimes T_1+
\lambda_{1,s} T_1\otimes T_{s_F}+
\lambda_{s,1} T_{s_F}\otimes T_{1}+\lambda_{s,s} T_{s_F}\otimes T_{s_F}\in H_F\otimes_\r H_F\ .$$
Assuming that $X$ is annihilated by $(j_F(T _{s_F})\otimes 1-1\otimes T _{s_F})$, we obtain by direct calculation that 
$$X=(T _{s_F}\otimes 1-1\otimes \upiota (T _{s_F}))\ast(\lambda_{s,s} T_1\otimes T_{s_F}+ \lambda_{1,s} T_1\otimes T_1)\ .$$

\end{proof}
\begin{fact} \label{fac3}The  image of the map $\partial^*$ as in \eqref{concretedelta} is 
the right $\Hhzo$-module generated by all $ \theta_C \ast (1 \otimes h - \upiota (h) \otimes 1)$ for $h\in H$.

\end{fact}

\begin{proof}[Proof of Fact \ref{fac3}] For the direct inclusion,  let $F\in \mathscr F_{d-1}$.
By Fact \ref{fac2}, the right $\Hhzo$-module $\mathfrak M_F$ is generated by
$\theta_F\ast (1\otimes T_{s_F}-\upiota (T_{s_F})\otimes 1)$ whose image by $\partial^*$
is $\theta_C\ast (1\otimes T_{s_F}-\upiota (T_{s_F})\otimes 1).$ We use here the fact that $\epsilon_C\vert {\Omega_F}= \epsilon_F$ because the choice of an orientation on $C$ determines an orientation on its codimension $1$ facets.\\\\
For the indirect inclusion, we show, for $w\in W$, that
$\theta_C\ast(1\otimes T_w-\upiota (T_w)\otimes 1)\text{  lies in the image of $\partial ^*$.}$
We proceed by induction on $\ell(w)$.\\
1)  If $\ell(w)=0$ namely $w\in \Omega$, then $\upiota (T_w)=\epsilon_C(w)T_w$ and 
$\theta_C\ast(1\otimes T_w-\upiota (T_w)\otimes 1)=0$.\\
2) If $\ell(w)=1$, we first consider the case $w= s_F$ for $F$ a facet in $\mathscr F_{d-1}$.
Then $\theta_F\ast (1\otimes T_{s_F}-\upiota (T_{s_F})\otimes 1)$ lies in $\mathfrak M_F$ and according to \eqref{concretedelta}  its image by $\partial ^*$ is 
$\theta_C\ast (1\otimes T_{s_F}-\upiota (T_{s_F})\otimes 1)$. \\ For $s\in \Saff$, there is $\omega\in \Omega$ and $F\in \mathscr F_{d-1}$  such that $s=\omega s_F \omega^{-1}$.
Then using the identity $\theta_C(T_{\omega}\otimes T_{\omega^{-1}})= \epsilon_C(\omega)\theta_C$, we verify
$
\theta_C\ast (1\otimes T_{s}-\upiota (T_{s})\otimes 1) = \epsilon_C(\omega) \theta_C\ast (1\otimes T_{ s_F} -\upiota ( T_{s_F} )\otimes 1)\star  T_{\omega}\otimes T_{ \omega^{-1}} \in \im(\partial^*).
$\\
Lastly, write an arbitrary element of length $1$  as $ \omega s$ for $s\in \Saff$ and $\omega \in \Omega$. We have
$\theta_C\ast (1\otimes T_{\omega s}-\upiota (T_{\omega s})\otimes 1) = \theta_C\ast (1\otimes T_{s}-\upiota (T_{s})\otimes 1)\star  T_\omega\otimes 1 \in \im(\partial^*)$ (using $(j_C(T _{\omega})\otimes 1-1\otimes T _{\omega})\theta_C=0$).
\\
3) Now let $w$ of length $\geq 1$ and  $s\in \Saff$ such that $\ell(sw)=\ell(w)+1$. We have
 \begin{align*}
\theta_C\ast (1\otimes T_{ s} T_w-\upiota (T_{s} T_w)\otimes 1)=
\theta_C\ast (1\otimes T_w-\upiota ( T_w)\otimes 1)\star T_s\otimes 1+\theta_C\ast (1\otimes T_s-\upiota (T_{s} )\otimes 1)\star 1\otimes \upiota ( T_w)
\end{align*}
which lies in $\im(\partial^*)$ by induction.

\end{proof}

The $\Hhz$-module $(\upiota)  H$ is equivalently a right $\Hhzo$-module with action 
$(h, T\otimes S)\mapsto \upiota (T)h S\ .$
The surjective map
\begin{equation} \begin{array}{cccc}\mu:& \Hz&\longrightarrow&(\upiota)  \Hn\cr &x\otimes y&\longmapsto & \upiota (y) x\end{array}\end{equation}
is then equivariant for the $\Hhzo$-action on the right. 
  
\begin{fact} The map $\mu$ factors through the kernel of the (left)  action of $\theta_C\,\ast\, $.

 \end{fact}
\begin{proof} 
In a first step we let $X=\sum_{u\in U, v\in \Omega} \lambda_{u,v} T_u\otimes T_v$ be a generic element in  $\HCzm$ as in \eqref{xgene}, where  $U$ is a chosen set of representatives of $\Omega/{Q^\perp}$ containing $1$.
 Write $\theta_C=\sum_{\omega\in U}\epsilon_C(\omega) T_ {\omega^{-1}}\otimes T_\omega$.
Given $v\in \Omega$, the coefficient of the  component in $T_1\otimes T_v$ of $\theta_C\ast X$ is  $\sum_{\omega\in U} \lambda_{\omega, v\omega^{-1}}\epsilon_C(\omega)$ while $\mu(X)=\sum_{v\in \Omega} \epsilon_C(v)T_v \sum_{\omega\in U}\lambda_{\omega,v \omega^{-1}} \epsilon_C( \omega)$. So 
$\theta_C\ast X=0$ implies  $\mu(X)=0$.

\noindent Now notice that $\Hz$ is a free left $\HCzm$-module (for the $\ast$ action) with basis $T_x\otimes T_y$, $x,y\in \Waff$.
An element  $X\in \Hz$  may therefore be written as $X=\sum_{ x,y\in \Waff} X_{x,y} \ast T_x\otimes T_y=\sum_{ x,y\in \Waff} X_{x,y} \star T_y\otimes T_x$ with $X_{x,y}\in \HCzm$. Assume
$\theta_C \ast X=0$. It  is equivalent to $\theta_C \ast X_{x,y}=0$  which implies $\mu(X_{x,y})=0$ for all $x,y\in \Waff$. But  $\mu$ is right $\Hhzo$-equivariant so $\mu(X)=0$.
\end{proof}

By Fact \ref{fac3}, it is clear that  $\mu$ also factors through $\im(\partial ^*)$.
Hence we have a well defined surjective right $\Hhzo$-equivariant map on $ \coker(\partial^*)=\mathfrak M_C/\im(\partial ^*)=(\theta_C\ast \Hz)/ \im(\partial ^*)$ given by
 \begin{equation} \begin{array}{clcc}\bar \mu:& \coker(\partial^*)&\longrightarrow& (\upiota) \Hn\cr &\theta_C\ast X \bmod \im(\partial ^*)&\longmapsto & \mu(X) \end{array}\end{equation} for $X\in \Hz$.
 We show that it is  also  injective by introducing the linear map
 $$f: (\upiota) \Hn\longrightarrow \coker(\partial^*), \quad  h\longmapsto \theta_C\ast (h\otimes 1) \bmod   \im(\partial ^*) \ .$$
Notice, for  $x\otimes y\in \Hz$ that \begin{align*}f(\mu( x\otimes y))-\theta_C\ast (x\otimes y)&=\theta_C\ast  (\upiota (y)x\otimes 1-x\otimes y)=\theta_C\ast ( \upiota (y)\otimes -1\otimes y)\star 1\otimes x\in \im(\partial^*)\end{align*}
This shows that $f\circ \bar\mu=\id_{\coker(\partial^*)}$
and $\bar\mu$ is bijective.
 \end{proof}
 
\section{The structure of $\Hh$ over its center}\label{sec:frob}

In this section we continue to assume $\r$ is a regular, finitely generated $k$-algebra.

\subsection{Projectivity over the center}

Let us denote by $\eZ_{\q^\pm}, \eZ_0$ and $\eZ_{\a,\b}$ the centers of $H_{\q^\pm}, H_0$ and $\Hh$ respectively. 

\begin{lemma}\label{lem:center}
We have $\eZ_{\q^\pm} \cong \r[\q^\pm][\cX]^{W_0}$ and $\eZ_0 \cong \r[\cX]^{W_0}$ with $H_{\q^\pm}$ and $H_0$ finitely generated over them.  
\end{lemma}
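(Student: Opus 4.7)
The plan is to deduce the statement from the classical Bernstein--Lusztig description of the centers together with a short invariant-theoretic argument for the finiteness.

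First I would invoke the Bernstein presentation of $H_{\q^\pm}$: the algebra contains a commutative subalgebra $A_{\q^\pm}$ with $\r[\q^\pm]$-basis indexed by $\cX$ (the Bernstein--Lusztig elements $\theta_{\check x}$), giving $A_{\q^\pm}\cong \r[\q^\pm][\cX]$; moreover $H_{\q^\pm}$ is free of rank $|W_0|$ as an $A_{\q^\pm}$-module. Lusztig's theorem \cite{Lu} then identifies $\eZ_{\q^\pm}$ with $A_{\q^\pm}^{W_0}\cong \r[\q^\pm][\cX]^{W_0}$. The analogous statement for $H_0$ is already recalled in the introduction: the natural commutative subalgebra $A_0\cong \r[\cX]\subset H_0$ of \cite{Vigann} satisfies $\eZ_0\cong A_0^{W_0}\cong \r[\cX]^{W_0}$ by \cite{Ollcompa}.

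The remaining point is finite generation over the center. For this it is enough to show that $A_{\q^\pm}$ (resp. $A_0$) is module-finite over $A_{\q^\pm}^{W_0}$ (resp. $A_0^{W_0}$), since $H_{\q^\pm}$ is module-finite over $A_{\q^\pm}$ and $H_0$ is module-finite over $A_0$. The second of these module-finiteness statements follows by the same argument as in the proof of Proposition \ref{prop:finite} applied to $H_0=H_{0,1}$; the first is the classical fact in the Bernstein presentation. That $\r[\cX]$ is finite over $\r[\cX]^{W_0}$ is a standard invariant-theoretic statement for the action of the finite group $W_0$ on the finitely generated Noetherian $\r$-algebra $\r[\cX]$ (and similarly after base change to $\r[\q^\pm]$). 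Combining, $H_{\q^\pm}$ and $H_0$ are finite over the subalgebras $A_{\q^\pm}^{W_0}=\eZ_{\q^\pm}$ and $A_0^{W_0}=\eZ_0$, respectively.

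I do not expect any genuine obstacle: the whole content of the lemma is a direct citation of \cite{Lu,Ollcompa} for the explicit center, paired with the elementary observation that $\cX$ is a finite $W_0$-set of generators of $\r[\cX]$ so that invariants are reached by a finite extension. The only minor care is that for $H_0$ the module structure over $A_0$ is no longer free (as recalled in the introduction via \cite{GL3}), but finite generation as an $A_0$-module is unaffected and suffices for our purpose.
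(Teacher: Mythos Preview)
Your treatment of $H_{\q^\pm}$ is exactly the paper's: invoke the Bernstein subalgebra $A_{\q^\pm}\cong\r[\q^\pm][\cX]$ from \cite{Lu}, freeness of $H_{\q^\pm}$ over it, and the identification $\eZ_{\q^\pm}=A_{\q^\pm}^{W_0}$.

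For $H_0$ there is a genuine slip. The commutative subalgebra $A_0\subset H_0$ of \cite{Vigann} is \emph{not} isomorphic as a ring to the group algebra $\r[\cX]$: at $\q=0$ the Bernstein-type basis elements do not obey the group-algebra multiplication $E_{\check x}E_{\check y}=E_{\check x+\check y}$ for arbitrary $\check x,\check y\in\cX$ (roughly, the product degenerates unless $\check x$ and $\check y$ lie in a common closed Weyl chamber), and $A_0$ has zero divisors. So the shortcut ``$A_0\cong\r[\cX]$, hence $A_0^{W_0}\cong\r[\cX]^{W_0}$'' is not available, and your finiteness step that transports ``$\r[\cX]$ finite over $\r[\cX]^{W_0}$'' across this identification loses its footing as well. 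The paper takes a different route: it quotes \cite{Vigann} for $\eZ_0=A_0^{W_0}$, then follows \cite{Ollcompa} to identify $\eZ_0$ directly with the semigroup algebra $\r[\cX^+]$ of dominant coweights, and finally uses a result of Lorenz \cite{Lo} for the isomorphism $\r[\cX^+]\cong\r[\cX]^{W_0}$. Finiteness of $H_0$ over $\eZ_0$ follows from $H_0$ being finite over $A_0$ (\cite{Vigann}) together with $A_0$ finite over $A_0^{W_0}$, which is standard invariant theory for a finite group acting on a finitely generated commutative $\r$-algebra and does not require $A_0$ to be a group algebra.
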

\begin{proof}
If $\q$ is invertible then classical results going back to Bernstein (\cite[Prop. 3.11]{Lu}) show that there exists a commutative subalgebra $A_{\q^\pm} \subset H_{\q^\pm}$ such that 
\begin{enumerate}
\item $H_{\q^\pm}$ is finitely generated and free over $A_{\q^\pm}$, 
\item $A_{\q^\pm} \cong \r[\q^\pm][\cX]$ and 
\item $\eZ_{\q^\pm} := A^{W_0}_{\q ^\pm} \cong \r[\q^\pm][\cX]^{W_0}$ is the center of $H_{\q^\pm}$.
\end{enumerate}
This proves the claim involving $H_{\q^\pm}$. 

The picture for $H_0$ is similar. The algebra $H_0$ again contains a commutative subalgebra $A_0$ over which it is finite \cite[Thm 3]{Vigann}). Moreover, $\eZ_0 \cong A_0^{W_0}$ (\cite[Thm 4]{Vigann}, see also \cite[2.2.3]{Ollcompa}). One then follows the proof of  \cite[Prop. 2.10]{Ollcompa} to identify $\eZ_0$ with $\r[\cX^+]$ which is isomorphic to $\r[\cX]^{W_0}$ (cf. Theorem in Section 2.4 of \cite{Lo}).
\end{proof}

Lemma \ref{lem:center} suggests the following natural extension.

\begin{conjecture}\label{conj:center}
We have $\eZ_{\a,\b} \cong \r[\a,\b][\cX]^{W_0}$ with $\Hh$ finitely generated over it. 
\end{conjecture}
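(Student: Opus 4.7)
The plan is to adapt the classical Bernstein--Lusztig strategy (\cite{Lu}) to the two-parameter graded setting. One would (i) construct a commutative subalgebra $A_{\a,\b} \subset \Hh$ isomorphic as $\r[\a,\b]$-algebra to $\r[\a,\b][\cX]$, (ii) show that $\Hh$ is free of rank $|W_0|$ as an $A_{\a,\b}$-module, (iii) establish two-parameter Bernstein relations implying that $A_{\a,\b}^{W_0}$ is central, and finally (iv) upgrade the inclusion $\r[\a,\b][\cX]^{W_0} \subseteq \eZ_{\a,\b}$ to an equality by a specialization argument.

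For (i), the most natural route is to use the identification $\Hh \cong \Rees(H_\q)$. The Bernstein subalgebra $A_{\q^\pm} \cong \r[\q^\pm][\cX]$ of $H_{\q^\pm}$ intersects $H_\q$ in a filtered subalgebra $A_\q$, and one would take $A_{\a,\b} := \Rees(A_\q) \subset \Rees(H_\q) = \Hh$. This yields elements $\Theta_{\check\mu} \in \Hh$ homogeneous of degree $\ell(\check\mu)$ in the $(\a,\b)$-grading, specializing to the classical $\theta_{\check\mu}$ at $(\a,\b)=(-\q,1)$ and to the $A_0$-elements of \cite{Vigann} at $(\a,\b)=(0,1)$. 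Step (ii) then follows the classical argument: length-additivity $\ell(w_0\check\lambda) = \ell(w_0)+\ell(\check\lambda)$ for sufficiently dominant $\check\lambda \in \cX^+$ together with induction on the length filtration.

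For (iii), one would verify a two-parameter Bernstein relation of the form
\[
T_s\,\Theta_{\check\mu} - \Theta_{s(\check\mu)}\,T_s \;=\; P(\a,\b,\,\Theta_{\check\alpha})\cdot (\Theta_{\check\mu}-\Theta_{s(\check\mu)})
\]
for $s = s_\alpha$ a finite simple reflection and $\alpha \in \Pi$, with $P$ an explicit rational expression (cleared of denominators in $\Hh$). This is most easily checked by reducing to the rank-one case and exploiting the Rees-algebra structure, so that the two specializations $(\a,\b)=(-\q,1)$ and $(0,1)$ together with $(\a,\b)$-homogeneity determine the identity. A standard consequence is that any $f \in A_{\a,\b}^{W_0}$ commutes with each $T_s$ (for $s \in \Saff$) and each $T_\omega$ (for $\omega \in \Omega$), hence is central; finiteness over $A_{\a,\b}^{W_0}$ is then immediate from (ii) and finiteness of $A_{\a,\b}$ over its $W_0$-invariants.

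The main obstacle is step (iv). By (ii), $\eZ_{\a,\b} \subseteq A_{\a,\b}$ since a central element must commute with each basis vector $T_{w_0}$. One must then show a central $z \in A_{\a,\b}$ is $W_0$-invariant. Specializing at $(-\q,1)$ and at $(0,1)$ places the images in $A_\q^{W_0}$ and $A_0^{W_0}$ respectively, via Lemma~\ref{lem:center}. Since $\Hh$ is a free $\r[\a,\b]$-module, a Zariski-density argument across sufficiently many specializations of $(\a,\b)$ should force $z$ itself to be $W_0$-invariant. Making this precise --- in particular verifying that enough specializations are ``generic enough'' for the center of the specialized algebra to reduce to $\r[\cX]^{W_0}$, uniformly across all possible $\r$ --- is the delicate part of the proof.
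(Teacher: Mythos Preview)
The statement you are attempting to prove is labelled a \emph{Conjecture} in the paper; there is no proof in the paper to compare against. The authors only use it as a hypothesis in Corollary~\ref{cor:conj}.

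Your approach contains a concrete obstruction at step~(ii). You propose elements $\Theta_{\check\mu}$ which, by your own account, specialize at $(\a,\b)=(0,1)$ to the generators of Vign\'eras's subalgebra $A_0 \subset H_0$. If $\Hh$ were free of rank $|W_0|$ over $A_{\a,\b}$, then base-changing along $\r[\a,\b]\to\r$, $(\a,\b)\mapsto(0,1)$ would force $H_0$ to be free of rank $|W_0|$ over $A_0$. But this is false: as recorded in the paper (Remark~\ref{rem:free}, citing \cite{GL3}), $H_0$ is not even flat over $A_0$. So either your $A_{\a,\b}$ does not specialize to $A_0$ (contradicting what you wrote), or step~(ii) is simply wrong. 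The classical Bernstein--Lusztig argument really uses the invertibility of $\q$ to produce the $\theta_{\check\mu}$ for non-dominant $\check\mu$ and to obtain freeness; this is precisely what breaks down in the integral and mod-$p$ settings.

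The failure of (ii) also undermines your step~(iv). The containment $\eZ_{\a,\b}\subseteq A_{\a,\b}$ is not a formal consequence of commuting with $T_{w_0}$; in Lusztig's argument it is extracted from the explicit PBW-type basis $\{T_w\theta_{\check\lambda}\}$ of $H_{\q^\pm}$ together with a filtration argument, both of which rest on (ii). Without freeness you have no such basis, and the specialization-density sketch you give at the end does not repair this: you would need the center of every (or at least a Zariski-dense family of) specializations to equal $\r[\cX]^{W_0}$, whereas the available results (Lemma~\ref{lem:center}) cover only the lines $\b$ invertible and $\a=0$.
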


Next we recall the following properties involving $\Z[\cX]^{W_0}$. If $X/Q$ is free then
\begin{itemize}
\item $\Z[\cX]$ is free over $\Z[\cX]^{W_0}$,
\item $\Z[\cX]^{W_0} \cong \Z[Q^\perp] \otimes_\Z \Z[\cX/Q^\perp]^{W_0}$,
\item $\Z[\cX/Q^\perp]^{W_0}$ is a polynomial algebra in the fundamental coweights. 
\end{itemize}
The first result follows from the Pittie-Steinberg theorem \cite[Thm. 1.1]{St2}. More precisely, the first statement above is proven in \cite[Thm. 2.2]{St2} (with $X$ in place of $\cX$).  The second and third results follow from \cite[Thm. 1.2(c)]{St2} (interpreted in terms of root systems) and \cite[Thm. 6.1]{St1}. 

In particular, if $X/Q$ is free, $\r[\cX]^{W_0}$ is isomorphic to the tensor product of a Laurent series ring and a polynomial ring. For example, for the based root systems associated to ${\rm PGL}_n$, ${\rm GL}_n$ and ${\rm SL}_n$ the quotients $X/Q$ are  trivial, $\Z$ and $\Z/n\Z$ respectively. Meanwhile, $\Z[\cX]^{W_0}$ is isomorphic to $\Z[x_1,\dots,x_{n-1}]$, $\Z[x_1,\dots,x_{n-1},x_n^\pm]$ and $\Z[x_1,\dots,x_{n-1}]^{\Z/n\Z}$ respectively (where the $\Z/n\Z$ acts by $x_i \mapsto x_{i+1}$ for $i \ne n-1$ and $x_{n-1} \mapsto -(x_1+\dots+x_{n-1})$). 

\begin{proposition}\label{prop:projective}
If $X/Q$ is free then $H_{\q^\pm}$ and $H_0$ are both projective over their centers. 
\end{proposition}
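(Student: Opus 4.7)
The plan is to invoke the non-commutative Hironaka criterion (Proposition \ref{prop:miracle}) with $A = H_{\q^\pm}$ or $H_0$ and $Z$ the corresponding center. To set up, I would decompose $\Spec \r$ into its finitely many connected components and work on each separately, so without loss of generality assume $\r$ is connected. By Lemma \ref{lem:center}, each algebra is finite over its center $Z_{\q^\pm}\cong \r[\q^\pm][\cX]^{W_0}$ or $Z_0 \cong \r[\cX]^{W_0}$, and these are finitely generated $k$-algebras. Using the assumption that $X/Q$ is free together with the Pittie-Steinberg consequences recalled just before the proposition, both centers are tensor products of a Laurent polynomial ring and a polynomial ring over $\r$ (or $\r[\q^\pm]$); since $\r$ is connected and regular, so are the centers.

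The remaining condition of Proposition \ref{prop:miracle} to verify is that the rigid dualizing complexes $\R_{H_{\q^\pm}}$ and $\R_{H_0}$ are concentrated in a single cohomological degree (i.e.\ that these algebras are Cohen-Macaulay). For $H_0$, I would view it as the base change of $\Hh$ along the map of central subalgebras $\z = \r[\a,\b][Q^\perp] \to \z'_0 := \r[Q^\perp]$ sending $\a \mapsto 0$, $\b \mapsto 1$. Corollary \ref{cor:basechange} combined with Theorem \ref{thm:main} identifies the $\z'_0$-rigid dualizing complex of $H_0$ with $(\upiota)H_0[d]$, concentrated in a single degree. Since $\z'_0$ is regular, Lemma \ref{lem:dualizing} yields $\R_{H_0}\cong (\upiota)H_0[d]\otimes_{\z'_0}\R_{\z'_0}$; because $\z'_0$ is a regular finitely generated $k$-algebra, its rigid dualizing complex is an invertible complex concentrated in one degree, so $\R_{H_0}$ remains concentrated in one degree. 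The argument for $H_{\q^\pm}$ is identical after base change to $\r[\q^\pm][Q^\perp]$ via $\a \mapsto -\q$, $\b \mapsto 1$.

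Proposition \ref{prop:miracle} then directly gives projectivity of $H_{\q^\pm}$ and $H_0$ over their centers. The main technical nuisance will be the careful bookkeeping of shifts and twists through the sequence of base changes (from $\z$ to $\z'_0$ or $\z'_{\q^\pm}$, and then tensoring with the respective $\R_{\z'}$) to ensure the final complex remains concentrated in a single degree; but this is automatic as long as the relevant ancillary rings are themselves regular, so that their rigid dualizing complexes are mere shifts of invertible modules.
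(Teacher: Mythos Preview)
Your proposal is correct and follows essentially the same strategy as the paper: verify the hypotheses of Proposition \ref{prop:miracle} by using the Pittie--Steinberg results to see the centers are regular, and use base change from Theorem \ref{theo:main}/Corollary \ref{coro:main} to see the rigid dualizing complex is concentrated in a single degree. Your explicit reduction to connected $\r$ and your more detailed tracking through Corollary \ref{cor:basechange} and Lemma \ref{lem:dualizing} are minor elaborations of steps the paper leaves implicit.
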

\begin{proof}
If $X/Q$ is free then $\eZ_0 \cong \r[\cX]^{W_0}$ is regular, connected. On the other hand, by Corollary \ref{coro:main}, $\R_{\Hh}$ is supported in one degree. By base change  (Proposition \ref{prop:basechange}) this implies that $\R_{H_0}$ is also supported in one degree.  Thus Proposition \ref{prop:miracle} (miracle flatness) implies that $H_0$ is  projective over $\eZ_0$. The proof for $H_{\q^\pm}$ is the same. 
\end{proof}
\begin{remark}\label{rem:sl2}
If the based root system is associated to the group $SL_2$ then $X/Q \cong \Z/2\Z$ is not free. However, the results of Proposition \ref{prop:projective} still hold since in this case $\r[\cX]^{W_0} \cong \r[x]^{\Z/2\Z} \cong \r[x^2]$ is a polynomial algebra so the argument from Proposition \ref{prop:projective} still applies. This recovers \cite[Cor. 3.4]{embed}. 
\end{remark}
\begin{remark}\label{rem:free}
In the case of $H_{\q^\pm}$ the standard proof that it is projective over its center uses the fact that it is free over the intermediate subalgebra $A_{\q^\pm}$. However, this direct argument fails in the case of $H_0$. In this case the algebra $A_0$ is more complicated and, in particular, $H_0$ is not projective over $A_0$ (see the introduction of \cite{GL3}). Nevertheless, it is still true (if $X/Q$ is free) that $H_0$ is projective over $\eZ_0$ as a result of miracle flatness (as used in the proof of Proposition \ref{prop:projective}). 
\end{remark}

\begin{proposition}\label{prop:RH/Z}
If $X/Q$ is free then $\R_{H_{\q^\pm}/\eZ_{\q^\pm}} \cong  (\upiota) H_{\q^\pm}$ and $\R_{H_0/\eZ_0} \cong (\upiota) H_0$. 
\end{proposition}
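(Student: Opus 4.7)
The plan is to extend Theorem \ref{theo:main} from $\z = \r[\a,\b][Q^\perp]$ to the full centers $\eZ_0$ and $\eZ_{\q^\pm}$ by combining Corollary \ref{cor:basechange} with a transitivity property of relative rigid dualizing complexes. The key point is that under the hypothesis $X/Q$ free, the Pittie--Steinberg description recalled just before Proposition \ref{prop:projective} makes $\eZ_0/\r[Q^\perp]$ (and $\eZ_{\q^\pm}/\r[\q^\pm][Q^\perp]$) a polynomial extension in $d$ variables, hence smooth of relative dimension $d$.

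Carrying this out, I would first apply Corollary \ref{cor:basechange} to Theorem \ref{theo:main}. Setting $\a\mapsto 0,\b\mapsto 1$ realizes $H_0$ as $\Hh \otimes_\z \r[Q^\perp]$, and inverting $\a$ with $\b\mapsto 1$ realizes $H_{\q^\pm}$ as $\Hh \otimes_\z \r[\q^\pm][Q^\perp]$. Since $\upiota$ acts trivially on $\z$ it descends to both quotients, and base change gives
$$\R_{H_0/\r[Q^\perp]} \cong (\upiota) H_0[d] \quad\text{and}\quad \R_{H_{\q^\pm}/\r[\q^\pm][Q^\perp]} \cong (\upiota) H_{\q^\pm}[d].$$
By Pittie--Steinberg, $\r[\cX]^{W_0} \cong \r[Q^\perp]\otimes_\r \r[\cX/Q^\perp]^{W_0}$ with the second factor polynomial in the $d$ fundamental coweights, so commutative Grothendieck duality gives $\R_{\eZ_0/\r[Q^\perp]} \cong \eZ_0[d]$ and similarly after inverting $\q$.

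Next, apply the transitivity formula
$$\R_{A/C} \cong \R_{A/B} \otimes_B \R_{B/C},$$
valid for a tower $C\subset B\subset A$ of regular central $\k$-subalgebras with $A$ flat over both, to $(A,B,C) = (H_0, \eZ_0, \r[Q^\perp])$. Flatness of $H_0$ over $\r[Q^\perp]$ follows from Proposition \ref{prop:freesF} by base change, and flatness over $\eZ_0$ is Proposition \ref{prop:projective}. Combining,
$$(\upiota) H_0[d] \cong \R_{H_0/\r[Q^\perp]} \cong \R_{H_0/\eZ_0} \otimes_{\eZ_0} \eZ_0[d] \cong \R_{H_0/\eZ_0}[d],$$
whence $\R_{H_0/\eZ_0} \cong (\upiota) H_0$; the same reasoning with $\q$ inverted gives $\R_{H_{\q^\pm}/\eZ_{\q^\pm}} \cong (\upiota) H_{\q^\pm}$.

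The main obstacle is verifying the transitivity formula in this mixed commutative/non-commutative setting, since the paper only establishes Lemma \ref{lem:dualizing}. Its proof is a direct generalization: apply Lemma \ref{lem:dualizing} once with $\z=C$ and once with $\z=B$, then cancel the invertible object $\R_C$ using the commutative identity $\R_B\cong \R_{B/C}\otimes_C \R_C$. No substantively new difficulty arises beyond tracking the bimodule structures carefully.
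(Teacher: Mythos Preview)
Your argument is correct and essentially coincides with the paper's proof. Both arguments compute the absolute rigid dualizing complex $\R_{H_0}$ two ways---once through the small central subalgebra $\r[Q^\perp]$ (via base change from Theorem \ref{theo:main}) and once through the full center $\eZ_0$ (via Lemma \ref{lem:dualizing})---and then cancel the invertible commutative pieces, using the Pittie--Steinberg identification $\R_{\eZ_0/\r[Q^\perp]}\cong \eZ_0[d]$. Your packaging of this cancellation as a transitivity formula $\R_{A/C}\cong \R_{A/B}\otimes_B \R_{B/C}$ is exactly what the paper does inline; the paper additionally routes through the explicit identification $\R_{H_0/\eZ_0}\cong \RHom_{\eZ_0}(H_0,\eZ_0)$ from Proposition \ref{prop:finite/Z}, but this is not an essential difference.
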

\begin{proof}
We prove the claim for $H_0$ as the proof for $H_{\q^\pm}$ is the same. By Proposition \ref{prop:finite/Z} 
$$\R_{H_0} \cong \RHom_{\eZ_0}(H_0, \eZ_0) \otimes_{\eZ_0} \R_{\eZ_0}$$
where $\eZ_0$ is regular because $X/Q$ is free. On the other hand, since $H_0 = \Hh/(\a,\b-1)$, we obtain from Corollary \ref{coro:main} using base change that
$$\R_{H_0} \cong (\upiota) H_0 [d] \otimes_{\z_{0,1}} \R_{\z_{0,1}}$$
where $\z_{0,1} := \z/(\a,\b-1) \cong \r[Q^\perp]$. It follows that 
$$\RHom_{\eZ_0}(H_0, \eZ_0) \cong  (\upiota) H_0 [d] \otimes_{\eZ_0} \R_{\eZ_0}^{-1} \otimes_{\r[Q^\perp]} \R_{\r[Q^\perp]} \cong  (\upiota) H_0 [d] \otimes_{\eZ_0} \R_{\eZ_0/\r[Q^\perp]}^{-1} \cong  (\upiota) H_0.$$
The third isomorphism above is because $\eZ_0 \cong \r[Q^\perp] \otimes_\r \r[\cX/Q^\perp]^{W_0}$ where $\r[\cX/Q^\perp]^{W_0}$ is a polynomial algebra in $d$ variables over $\r$ and therefore $\R_{\eZ_0/\r[Q^\perp]}\cong Z_0[d]$. 
\end{proof}

\begin{corollary}\label{cor:conj}
If $X/Q$ is free then assuming Conjecture \ref{conj:center} the algebra $\Hh$ is finitely generated, projective over $\eZ_{\a,\b}$ and $\R_{\Hh/\eZ_{\a,\b}} \cong (\upiota) \Hh$. 
\end{corollary}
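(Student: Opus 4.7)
The plan is to follow the same template as the proof of Proposition~\ref{prop:RH/Z}, but in the generic setting. Conjecture~\ref{conj:center} gives us both that $\eZ_{\a,\b} \cong \r[\a,\b][\cX]^{W_0}$ and that $\Hh$ is finitely generated over $\eZ_{\a,\b}$. The hypothesis that $X/Q$ is free, together with the Pittie--Steinberg facts recalled just before Proposition~\ref{prop:projective}, identifies $\eZ_{\a,\b}$ with $\z \otimes_\r \r[\cX/Q^\perp]^{W_0}$, where the second factor is a polynomial $\r$-algebra in $d$ variables (the fundamental coweights). Since $\r$ is regular, so is $\eZ_{\a,\b}$; in fact $\eZ_{\a,\b}$ is a $d$-dimensional polynomial algebra over the regular ring $\z$, and (assuming $\r$ connected) is connected.

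First I would deduce projectivity of $\Hh$ over $\eZ_{\a,\b}$ from miracle flatness (Proposition~\ref{prop:miracle}). What must be checked is that $\R_{\Hh}$ is supported in a single cohomological degree. But Corollary~\ref{coro:main} gives $\R_{\Hh} \cong (\upiota)\Hh[d] \otimes_\z \R_\z$, and since $\z$ is regular, $\R_\z$ is an invertible complex concentrated in one degree; so $\R_{\Hh}$ is too. This grants flatness of $\Hh$ over $\eZ_{\a,\b}$, which is the hypothesis needed in Definition~\ref{def:dualizing1} to speak about the $\eZ_{\a,\b}$-rigid dualizing complex.

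For the identification of $\R_{\Hh/\eZ_{\a,\b}}$ I would argue as follows. Apply Lemma~\ref{lem:dualizing} to the inclusion $\eZ_{\a,\b} \subset \Hh$ (both regular on the commutative side, now that flatness is known) to obtain
\[
\R_{\Hh} \cong \R_{\Hh/\eZ_{\a,\b}} \otimes_{\eZ_{\a,\b}} \R_{\eZ_{\a,\b}}.
\]
Apply Lemma~\ref{lem:dualizing} again to the inclusion $\z \subset \eZ_{\a,\b}$ (both commutative regular) to obtain $\R_{\eZ_{\a,\b}} \cong \R_{\eZ_{\a,\b}/\z} \otimes_\z \R_\z \cong \eZ_{\a,\b}[d] \otimes_\z \R_\z$, where the second isomorphism uses that $\eZ_{\a,\b}$ is a polynomial algebra of dimension $d$ over $\z$. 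Combining with Corollary~\ref{coro:main}:
\[
(\upiota)\Hh[d] \otimes_\z \R_\z \;\cong\; \R_{\Hh/\eZ_{\a,\b}} \otimes_{\eZ_{\a,\b}} \eZ_{\a,\b}[d] \otimes_\z \R_\z \;\cong\; \R_{\Hh/\eZ_{\a,\b}}[d] \otimes_\z \R_\z.
\]
Invertibility of $\R_\z$ lets us cancel it and the shift on both sides, leaving $\R_{\Hh/\eZ_{\a,\b}} \cong (\upiota)\Hh$.

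The main thing to be careful about is the order of the argument: projectivity must come before invoking the $\eZ_{\a,\b}$-relative machinery, since Definitions~\ref{def:dualizing1}--\ref{def:dualizing2} and Lemma~\ref{lem:dualizing} all presume flatness of the noncommutative algebra over its central subalgebra. A secondary bookkeeping issue is that the tensor products above intertwine the $\Hh$-bimodule structure with scalar actions of $\z$ and $\eZ_{\a,\b}$; since $\z \subset \eZ_{\a,\b} \subset \Hh$ both sit centrally and left/right actions agree on them, these identifications go through unchanged. Beyond that the argument is a direct transcription of the $H_0$/$H_{\q^{\pm}}$ case in Proposition~\ref{prop:RH/Z}.
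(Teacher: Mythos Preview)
Your argument is correct and is exactly the approach the paper has in mind: Remark~\ref{rem:Hh} simply says the corollary ``follows as in the proofs of Propositions~\ref{prop:projective} and~\ref{prop:RH/Z}'', and you have spelled out precisely that transcription (miracle flatness for projectivity, then cancelling $\R_\z$ and the shift by $d$ coming from $\eZ_{\a,\b}$ being a $d$-variable polynomial algebra over $\z$). The only cosmetic difference is that the paper, in Proposition~\ref{prop:RH/Z}, writes $\R_{A/Z}$ as $\RHom_Z(A,Z)$ via Proposition~\ref{prop:finite/Z} before cancelling, whereas you invoke Lemma~\ref{lem:dualizing} directly; these are the same computation.
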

\begin{remark}\label{rem:Hh}
The result above follows as in the proofs of Propositions \ref{prop:projective} and \ref{prop:RH/Z}. Conversely, applying base change (cf. Corollary \ref{prop:basechange}),  Corollary \ref{cor:conj} immediately implies Propositions \ref{prop:projective} and \ref{prop:RH/Z}. 
\end{remark}

\begin{remark}
By Proposition \ref{prop:finite/Z}, we know that $\R_{H_0} \cong \RHom_{Z_0}(H_0, \R_{Z_0})$ which implies in particular that $\R_{H_0}$ is an $H_0\otimes_{Z_0}  H_0^o$-module.  As we saw above, using base change, we have
$\R_{H_0} \cong (\upiota) H_0 [d] \otimes_{\z_{0,1}} \R_{\z_{0,1}}$. It follows that $\upiota$ must act trivially on $Z_0$. 
The same is true for $H_{\q^\pm}$ and $Z_{\q^{\pm}}$. This recovers 
 \cite[Prop. 3.2]{Ollcompa} which was proved by explicit calculation.

\end{remark}
\subsection{Frobenius structure}

Consider a pair of rings $S \subset S'$. Recall that $S'$ is a Frobenius (resp. free Frobenius) extension of $S$ if:
\begin{enumerate}
\item $S'$ is a finitely generated, projective  (resp. free) $S$-module,
\item there exists an isomorphism $\phi: S' \xrightarrow{\sim} \Hom_S(S',S)$ of $(S',S$)-bimodules.
\end{enumerate}
In such cases one can define the bilinear form $S' \times S' \to S$ via $\la x,y \ra := \phi(y)(x)$. 
Suppose $S \subset S'$ is a free Frobenius extension with $S$ commutative. By \cite[Cor. 1.2]{BF}, if $\{x_i\}$ is a basis of $S'$ over $S$ then the matrix $[\la x_i, x_j \ra]_{i,j}$ is an invertible matrix over $S$.

\begin{corollary}\label{cor:frob}
Suppose $X/Q$ is free and $\r = \k$. Then $H_{\q^\pm}$ and $H_0$ are both free Frobenius extensions over their centers. The same is true of $\Hh$ if we assume Conjecture \ref{conj:center}. The Nakayama automorphism in all three cases is $\upiota$. 
\end{corollary}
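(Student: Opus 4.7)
The plan is to combine the identifications of the rigid dualizing complexes from Proposition~\ref{prop:RH/Z} (for $H_{\q^\pm}$ and $H_0$) and Corollary~\ref{cor:conj} (for $\Hh$, under Conjecture~\ref{conj:center}) with a freeness upgrade over the centers. For a pair $(A, Z)$ equal to $(H_{\q^\pm}, \eZ_{\q^\pm})$, $(H_0, \eZ_0)$, or $(\Hh, \eZ_{\a,\b})$, Proposition~\ref{prop:finite/Z} gives a bimodule isomorphism $\R_{A/Z} \cong \RHom_Z(A, Z)$. By Proposition~\ref{prop:projective} (respectively Corollary~\ref{cor:conj}) the algebra $A$ is projective over $Z$, so $\RHom_Z(A, Z)$ reduces to the ordinary Hom $\Hom_Z(A, Z)$; combining with $\R_{A/Z} \cong (\upiota) A$ then yields an $(A, A)$-bimodule isomorphism
$$\Hom_Z(A, Z) \cong (\upiota) A.$$
This is exactly the datum of a Frobenius extension with Nakayama automorphism $\upiota$: by the paper's convention, $(\upiota) A$ records precisely the twist of the left $A$-action needed to identify $\Hom_Z(A, Z)$ with $A$ as a bimodule, and this twist is (by definition) the Nakayama automorphism. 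Since $\upiota$ is an involution, I do not have to worry about left versus right conventions for $\nu^{\pm 1}$.

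The remaining --- and really the only new --- task is to upgrade projectivity of $A$ over $Z$ to freeness. Here the hypothesis that $X/Q$ is free, together with $\r = \k$, enters through the explicit description of the centers recalled just before Proposition~\ref{prop:projective}:
$$\eZ_0 \cong \k[Q^\perp] \otimes_\k \k[\cX/Q^\perp]^{W_0}, \quad \eZ_{\q^\pm} \cong \k[\q^\pm] \otimes_\k \eZ_0, \quad \eZ_{\a,\b} \cong \k[\a,\b] \otimes_\k \eZ_0,$$
where by the Pittie--Steinberg theorem $\k[\cX/Q^\perp]^{W_0}$ is itself a polynomial algebra (of rank $d=\rk(Q)$) over $\k$. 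Each center is therefore a tensor product of a polynomial algebra and a Laurent polynomial algebra over the field $\k$, and by the Quillen--Suslin theorem together with Swan's extension to Laurent polynomial rings, every finitely generated projective module over such a ring is automatically free. This upgrades projectivity to freeness, completing the argument.

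The main obstacle is precisely this freeness step. Once the rigid dualizing complex has been identified in the form $(\upiota) A$ and projectivity over the center is known, the Frobenius conclusion is essentially formal bookkeeping. However, freeness cannot be read off from any intermediate commutative subalgebra in the case of $H_0$ (as stressed in Remark~\ref{rem:free}, $H_0$ is not even flat over $A_0$ in general), so we genuinely rely on Quillen--Suslin/Swan applied directly to the (regular) center in order to close the argument.
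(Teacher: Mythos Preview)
Your proposal is correct and follows essentially the same route as the paper: identify $\R_{A/Z}\cong(\upiota)A$ via Proposition~\ref{prop:RH/Z} (or Corollary~\ref{cor:conj}), use projectivity over the center to reduce $\RHom_Z(A,Z)$ to $\Hom_Z(A,Z)$, and then upgrade projectivity to freeness using the explicit polynomial-times-Laurent shape of the center over $\k$. The only cosmetic difference is that for the freeness step the paper invokes Gabber's result \cite[Thm.~2.1]{G}, whereas you appeal to Quillen--Suslin together with Swan's extension to Laurent polynomial rings; these cover the same ground here.
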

\begin{proof}
By Lemma \ref{lem:center} and Proposition \ref{prop:projective}, $H_0$ is finitely generated, projective over $\eZ_0$. Since this center is the tensor products of a polynomial and Laurent series and $\r=\k$ it follows (e.g. \cite[Thm. 2.1]{G}) that $H_0$ are actually free over $\eZ_0$. The free Frobenius structure now follows since, as we saw in Proposition \ref{prop:RH/Z}, $\RHom_{\eZ_0}(H_0, \eZ_0) \cong  (\upiota) H_0$. The proofs involving $H_{\q^\pm}$ and $\Hh$ are the same. 
\end{proof}

\end{document}